\numberwithin{equation}{section} 
\title{Well-posed boundary integral equation formulations and Nystr\"om discretizations for the solution of Helmholtz transmission problems in two-dimensional Lipschitz domains}
\author{V\'{\i}ctor Dom\'{\i}nguez\thanks{Dep. Ingenier\'{i}a Matem\'atica e Inform\'atica, Universidad P\'{u}blica de Navarra. Campus de Tudela 31500 - Tudela, Spain, e-mail: victor.dominguez@unavarra.es.}, Mark Lyon\thanks{Department of Mathematics and Statistics, University of New Hampshire, Durham, NH 03861, USA
e-mail: mark.lyon@unh.edu.}, Catalin Turc\thanks{  Department of
Mathematical Sciences and Center for Applied Mathematics and Statistics, New Jersey  Institute of Technology,
Univ. Heights. 323 Dr. M. L. King Jr. Blvd, Newark, NJ 07102, USA, e-mail: catalin.c.turc@njit.edu.}}
\newtheorem{theorem}{Theorem}[section]
\newtheorem{lemma}[theorem]{Lemma}
\newtheorem{remark}[theorem]{Remark}
\newenvironment{proof}{\hspace{0.5cm} {\bf Proof.}}
{$\quad {}_\blacksquare$\vspace{0.3cm}}
\date{}
\newcommand{\triple}[1]{{\left\vert\kern-0.25ex\left\vert\kern-0.25ex\left\vert #1 
    \right\vert\kern-0.25ex\right\vert\kern-0.25ex\right\vert}}
\begin{document}
\maketitle
\begin{abstract}
  We present a comparison between the performance of solvers based on Nystr\"om discretizations of several well-posed boundary integral equation formulations of Helmholtz transmission problems in two-dimensional Lipschitz domains. Specifically, we focus on the following four classes of boundary integral formulations of Helmholtz transmission problems (1) the classical first kind integral equations for transmission problems~\cite{costabel-stephan}, (2) the classical second kind integral equations for transmission problems~\cite{KressRoach}, (3) the {\em single} integral equation formulations~\cite{KleinmanMartin}, and (4) certain {\em direct} counterparts of recently introduced Generalized Combined Source Integral Equations~\cite{turc2,turc3}.  The former two formulations were the only formulations whose well-posedness in Lipschitz domains was rigorously established~\cite{costabel-stephan,ToWe:1993}. We establish the well-posedness of the latter two formulations in appropriate functional spaces of boundary traces of solutions of transmission Helmholtz problems in Lipschitz domains. We give ample numerical evidence that Nystr\"om solvers based on formulations (3) and (4) are computationally more advantageous than solvers based on the classical formulations (1) and (2), especially in the case of high-contrast transmission problems at high frequencies.   
 \newline \indent
  \textbf{Keywords}: transmission problems, 
  integral equations, Lipschitz domains, regularizing
  operators, Nystr\"om method, graded meshes.\\
   
 \textbf{AMS subject classifications}: 
 65N38, 35J05, 65T40, 65F08
\end{abstract}

\section{Introduction\label{intro}}

\parskip 2pt plus2pt minus1pt

A wide variety of well-posed boundary integral equations for the solution of Helmholtz transmission problems has been proposed in the literature, at least in the case when the interfaces of material discontinuity are regular enough. Most of these formulations are derived from representations of the fields in each region filled by a homogeneous material by suitable combinations of single and double layer potentials. The enforcement of the continuity of solutions and their normal derivatives across interfaces of material discontinuity leads to Combined Field Integral Equations (CFIE) of transmission scattering problems. Some of these integral formulations involve two unknowns \cite{costabel-stephan,KittapaKleinman,KressRoach,MR2222969,rokhlin-dielectric}, three or more unknowns \cite{LaRaSa:2009,MR2391686}, 
while others involve one unknown per each interface of material discontinuity~\cite{KleinmanMartin}. It is also possible to formulate transmission problems in terms of both interior and exterior traces--Multi-Trace Formulations (MTF), that is using four unknowns per each interface of material discontinuity~\cite{jerez-hanckes1,jerez-hanckes2}.  More general boundary problems, which include not only transmission conditions but mixed Dirichet-Neumann conditions in complex geometrical configurations as well, have been studied in \cite{MR984053}. 

In the technologically important case of transmission problems at high-frequencies, the numerical solutions of boundary integral equation formulations typically rely on Krylov subspace iterative methods. As in the case of impenetrable scattering problems~\cite{turc1}, the classical boundary integral equations of transmission problems are not particularly well suited for Krylov iterative solutions of transmission problems at high-frequencies. We have demonstrated recently that a novel class of boundary integral equations referred to as Generalized Combined Source Integral Equations (GCSIE)~\cite{turc2,turc3} is a more favorable alternative for smooth transmission problems that involve high-contrast configurations at high-frequencies. The main scope of this paper is to investigate to what extent the aforementioned claim is valid in the case of high-frequency, high-contrast Helmholtz transmission problems when the interface of material continuity is a Lipschitz curve. 

An important question related to boundary integral equation (BIE) formulations of linear, constant-coefficient PDEs is whether the BIE are well-posed. This issue is typically settled via Fredholm arguments whose flavor differ significantly from the case of regular boundaries to the case of Lipschitz boundaries. The case of smooth boundaries is extremely well understood and researched, as one can take full advantage of the increased smoothing properties of double layer operators that guarantee compactness properties needed in the Fredholm theory~\cite{turc3}. In addition, a very general methodology based on coercive approximations of Dirichlet-to-Neumann operators can deliver optimally conditioned boundary integral formulations (GCSIE) for transmission Helmholtz problems~\cite{turc3}; the aforementioned enhanced smoothing properties play a major role in  establishing the well-posedness of the GCSIE in the smooth case. We address in this paper the issues of well-posedness and well-conditioning of the GCSIE formulations in the Lipschitz case by making use of deep results from harmonic analysis~\cite{Ver:1984} and certain duality pairings. We also present a {\em direct} counterpart to the GCSIE formulations which we refer to as Regularized Combined Field Integral Equations (CFIER). From the numerical point of view, the main advantage of the direct CFIER formulations is given by the fact that they employ as unknowns Dirichlet and Neumann traces of transmission problems, whose singular behavior around corner points is well understood~\cite{costabel-stephan} and thus can be resolved by graded meshes towards corners. This brings us to the second major point of this paper, high-order Nystr\"om discretizations of transmission boundary integral equations in two-dimensional Lipschitz domains. 

High-order Nystr\"om methods typically employ graded meshes in order to deal with singularities associated with solutions of boundary integral equations in domains with corners~\cite{KressCorner,helsing1,helsing2}. One issue that arises in this regard is the possibly unbounded nature of such solutions in the neighborhood of corners in the case of integral formulations of the second kind. While in several instances the issue can be avoided by resorting to alternative integral equation formulations~\cite{turc_corner_N} whose solutions are regular enough (e.g. H\"older continuous), in many others, including the case of transmission problems in domains with corners, the unboundedness of solutions cannot be avoided. Two main approaches to tackle the unbounded nature of solutions of integral equations of the second kind have been recently introduced: (a) one that relies on incorporation of the {\em known} asymptotic infinite behavior of solutions in the vicinity of corners and exact cancellations of infinite quantities~\cite{turc_ovall}; and (b) one that uses jacobians associated with graded meshes in order to introduce more regular {\em weighted} solutions as new unknowns of newly {\em weighted} integral formulations of the second kind~\cite{monegato,bremer,greengard1,greengard2,LuW}. We pursue a version of the latter approach in this paper in conjunction with simple modifications of a Nystr\"om method based on global trigonometric interpolation, singular kernel-splitting, and analytic evaluations of integrals that involve products of certain singular functions and Fourier harmonics~\cite{kusmaul,martensen}. Several ideas in this approach were introduced in~\cite{LuW} for the discretization of the first kind boundary integral equation formulation of transmission problems~\cite{costabel-stephan} in domains with corners. The method incorporates sigmoid transforms~\cite{KressCorner} within parametrizations of domains with corners and it uses the Jacobians of these transformations as multiplicative weights to define new unknowns. Specifically, the focus of the paper is on direct integral formulations of transmission problems whereby the unknowns are the Dirichlet and Neumann traces of solutions of transmission problems on the Lipschitz boundary. A weighted Neumann trace defined as the product of the derivatives of the sigmoid parametrizations and the usual Neumann trace of solution of transmission problems is introduced as a new unknown; given that the derivatives of the parametrizations that incorporate sigmoid transforms vanish polynomially at corners, the weighted traces are more regular for large enough values of the order of the polynomial in the sigmoid transform. Introducing new weighted unknowns also require definition of new weighted boundary integral equations that involve weighted versions of the four scattering boundary integral operators. It turns out that the kernel splitting techniques originally developed for smooth curves~\cite{KressH} can be easily extended to the weighted boundary integral operators, delivering a high-order Nystr\"om discretizations for the various formulations considered in this paper. We give ample numerical evidence that in the high-contrast, high-frequency regime the single integral equation formulations and our novel regularized formulations have superior spectral properties over the classical formulations of transmission problems, giving rise to important computational savings.

The paper is organized as follows: in Section~\ref{cfie} we formulate the Helmholtz transmission problem we are interested in; in Section~\ref{di_ind_cfie} we recount the definition of the four scattering boundary integral operators and we discuss several boundary integral formulations of the Helmholtz transmission problem; in Section~\ref{well-posedness} we establish the well-posedness of several of the boundary integral formulations discussed in this paper, including the regularized formulations CFIER; finally, in Section~\ref{singular_int} we present high-order Nystr\"om discretizations of the various boundary integral equations considered in this paper and we carry out a comparison between solvers based on this formulations that emphasizes the benefits that can be garnered from the use of single integral equations and the regularized integral equations.

\section{Integral Equations of Helmholtz transmission problems\label{cfie}}

We consider the problem of evaluating the time-harmonic fields $u^1$ and $u^2$ that result as an incident field $u^{inc}$ impinges upon the boundary
$\Gamma$ of a homogeneous dielectric scatterer $D_2$ which occupies a bounded region in $\mathbb{R}^2$. We assume that both media occupying $D_2$ and its exterior are nonmagnetic, and the electric permitivity of the dielectric material inside the domain $D_2$ is denoted by $\epsilon_2$ while that of the medium occupying the exterior of $D_2$ is denoted by $\epsilon_1$. The frequency domain dielectric transmission problem is formulated in terms of finding fields $u^1$ and $u^2$ that are solutions to the Helmholtz equations
\begin{equation} 
%% \label{eq:Ac_e}
  \label{eq:Ac_i}
\begin{aligned}
  \Delta u^2+k_2^2 u^2&=& 0, \qquad &\mathrm{in}\ D_2,\\
  \Delta u^1+k_1^2 u^1&=&0,\qquad &\mathrm{in}\ D_1=\mathbb{R}^2\setminus {\overline{D_2}},
\end{aligned}
\end{equation} 
where the wavenumbers $k_i,i=1,2$ are defined as $k_i=\omega\sqrt{\epsilon_i}, i=1,2$ in terms of the frequency $\omega$. The incident field $u^{inc}$ is assumed to satisfy 
\begin{equation}
  \label{eq:Maxwell_inc}
  \Delta u^{inc}+k_1^2 u^{inc}=0 \qquad \mathrm{in}\  \widetilde{D}_{2},
\end{equation}
where $\widetilde{D}_{2}$ is an open neighborhood of  $ \overline{D_2}$.  Therefore  $u^{inc}$ has to be smooth, actually analytic, in $\overline{D_2}$, which includes  plane waves or spherical waves from a point source placed in the exterior domain ${D_1}$.

In addition, the fields $u^{1}$, $u^{inc}$, and $u^2$ are related on the boundary $\Gamma$ by the the following boundary conditions
\begin{eqnarray}
\label{eq:bc}
\gamma_D^1 u^1 + {\gamma_D u^{inc}} &=&\gamma_D^2 u^2\qquad \rm{on}\ \Gamma \nonumber\\
\gamma_N^1 u^1 + {\gamma_N u^{inc}}&=&\rho \gamma_N^2u^2\qquad \rm{on}\ \Gamma
\end{eqnarray} 
with $\rho>0$.  
In equations~\eqref{eq:bc} and what follows $\gamma_D^i,i=1,2$ denote  exterior ($i=1$) and interior Dirichlet traces ($i=2$). Similarly $\gamma_N^i,i=1,2$ denote exterior and interior Neumann traces taken with respect to the exterior unit normal on $\Gamma$. When  both  Dirichlet, respectively Neumann, traces coincide, we will simply write $\gamma_D$, respectively $\gamma_N$.  (Notice that $\gamma_{D}^1 u^{inc}=\gamma_{D}^2 u^{inc}$,
$\gamma_{N}^1 u^{inc}=\gamma_{N}^2 u^{inc}$ and therefore we are allowed to use  $\gamma_{\{D,N\}} u^{inc}$ in \eqref{eq:bc}).

We assume in what follows that the boundary $\Gamma$ is a closed Lipschitz curve in $\mathbb{R}^2$. Depending on the type of scattering problem, the transmission coefficient ${\rho}$ in equations~\eqref{eq:bc} can be either $1$ (E-polarized) or $\epsilon_1/\epsilon_2$ (H-polarized). We furthermore require that $u^1$ satisfies the Sommerfeld radiation conditions at infinity:
\begin{equation}\label{eq:radiation}
\lim_{|\bm{r}|\to\infty}|\bm{r}|^{1/2}(\partial u^1/\partial \bm{r} - ik_1u^1)=0.
\end{equation}
(Here $\partial  /\partial \bm{r}$  is the radial derivative).
Note that under these assumptions the  wavenumbers $k_i,$ $i=1,2$ are  real numbers. It is well known that in this case the systems of partial differential equations~\eqref{eq:Ac_i}-\eqref{eq:Maxwell_inc} together with the boundary conditions~\eqref{eq:bc} and the radiation condition~\eqref{eq:radiation} has a unique solution~\cite{KressRoach,KleinmanMartin}. The results in this text can be extended to the case of complex wavenumbers $k_i,i=1,2$, provided we assume uniqueness of the transmission problem and its adjoint, that is, for the same transmission problem but with wavenumbers $k_1$ for $D_2$ and $k_2$ for $D_1$ \cite{KressRoach}.

% \modification{Similarly, the case of multiple non-intersecting objects, with different wavenumbers can be covered by the present analysis with direct modifications. We prefer, however, to present the results and develop the analysis in the simpler case to allow ourselves to focus on the main issues of the problem.}

%Moreover, the adjoint problem obtained by interchanging the interior and exterior domains has a unique solution~\cite{KressRoach}. The results in this text can be extended to the case of complex wavenumbers $k_i,i=1,2$, provided we assume uniqueness of the transmission problem and its adjoint.

%\section{Boundary integral operators associated with the Helmholtz equations and second kind boundary integral formulations of Helmholtz transmission problems\label{di_ind_cfie}}

\section{Boundary integral formulation for the 
 transmission problems\label{di_ind_cfie}}

A variety of well-posed integral equations for the transmission problem~\eqref{eq:Ac_i}-\eqref{eq:bc} exist~\cite{KressRoach,costabel-stephan,KleinmanMartin,turc2}. On one hand, integral equations formulations for transmission problems can be formulated as a $2\times 2$ system of integral equations which can be derived from (a) Green's formulas in both domains $D_1$ and $D_2$, in which case they are referred to as {\em direct} integral equation formulations~\cite{costabel-stephan,KleinmanMartin}, (b) from representations of the fields $u^j,j=1,2$ in forms of suitable combinations of single and double layer potentials in both domains $D_1$ and $D_2$, in which case they are referred to as {\em indirect} integral equation formulations~\cite{KressRoach}, or (c) from Green's formulas and suitable approximations to exterior and interior Dirichlet-to-Neumann operators, in which case they are referred to as {\em regularized} combined field integral equations or generalized combined source integral equations~\cite{turc2}. On the other hand, integral equations formulations for transmission problems can be formulated as {\em single} integral equations which can be derived from (d) Green's formulas in one of the domains and (indirect) combined field representations in the other domain~\cite{KleinmanMartin}. The strategies recounted above lead to Fredholm second kind boundary integral equations for the solution of transmission problems~\cite{KressRoach,KleinmanMartin,turc2,turc3}, at least in the case when the curve $\Gamma$ is smooth enough ($C^3$ suffices). The first part of this paper is devoted to establishing the well-posedness of the boundary integral equations of the type (c) and (d) in the case when the curve $\Gamma$ is Lipschitz. To this end, we begin by reviewing the definition and mapping properties of the various scattering boundary integral operators. 

\subsection{Layer potentials and operators}
We start with the definition of the single and double layer potentials. Given a wavenumber $k$ such that $\Re{k}>0$ and $\Im{k}\geq 0$, and a density $\varphi$ defined on $\Gamma$, we define the single layer potential as
$$[SL_k(\varphi)](\mathbf{z}):=\int_\Gamma G_k(\mathbf{z}-\mathbf{y})\varphi(\mathbf{y})ds(\mathbf{y}),\ \mathbf{z}\in\mathbb{R}^2\setminus\Gamma$$
and the double layer potential as
$$[DL_k(\varphi)](\mathbf{z}):=\int_\Gamma \frac{\partial G_k(\mathbf{z}-\mathbf{y})}{\partial\mathbf{n}(\mathbf{y})}\varphi(\mathbf{y})ds(\mathbf{y}),\ \mathbf{z}\in\mathbb{R}^2\setminus\Gamma$$
where $G_k(\mathbf{x})=\frac{i}{4}H_0^{(1)}(k|\mathbf{x}|)$ represents the two-dimensional Green's function of the Helmholtz equation with wavenumber $k$. The Dirichlet and Neumann exterior and interior traces on $\Gamma$ of the single and double layer potentials corresponding to the wavenumber $k$ and a density $\varphi$ are given by
\begin{equation}\label{traces}
\begin{split}
\gamma_D^1 SL_k(\varphi)&=\gamma_D^2 SL_k(\varphi)= {\gamma_D SL_k(\varphi)}=S_k\varphi \\
\gamma_N^j SL_k(\varphi)&=(-1)^j\frac{\varphi}{2}+K_k^\top \varphi\quad j=1,2\\
\gamma_D^j DL_k(\varphi)&=(-1)^{j+1}\frac{\varphi}{2}+K_k\varphi\quad j=1,2\\
\gamma_N^1 DL_k(\varphi)&=\gamma_N^2 DL_k(\varphi)= {\gamma_N DL_k(\varphi)}=\varphi.
\end{split}
\end{equation}
In equations~\eqref{traces} the operators $K_k$ and $K^\top_k$, usually
referred to as double and adjoint double layer operators, are defined for a given wavenumber $k$ and density $\varphi$ as
\begin{equation}
\label{eq:double}
(K_k\varphi)(\mathbf x):=\int_{\Gamma}\frac{\partial G_k(\mathbf x-\mathbf y)}{\partial\mathbf{n}(\mathbf y)}\varphi(\mathbf y)ds(\mathbf y),\ \mathbf x\ {\rm on}\ \Gamma
\end{equation}
and 
\begin{equation}
\label{eq:adj_double}
(K_k^\top\varphi)(\mathbf x):=\int_{\Gamma}\frac{\partial G_k(\mathbf x-\mathbf y)}{\partial\mathbf{n}(\mathbf x)}\varphi(\mathbf y)ds(\mathbf y),\ \mathbf x\ {\rm on}\ \Gamma.
\end{equation}
Furthermore, for a given wavenumber $k$ and density $\varphi$, 
the operator $N_k$ denotes the Neumann trace of the double layer potential on 
$\Gamma$ given in terms of a Hadamard Finite Part (FP)  integral which can be re-expressed in terms of a Cauchy Principal Value (PV) integral that involves the tangential derivative $\partial_s$ on the curve $\Gamma$
\begin{eqnarray}
 \label{eq:normal_double:0} 
(N_k \varphi)(\mathbf x) &:=& \text{FP} \int_\Gamma \frac{\partial^{2}G_k(\mathbf x -\mathbf y)}{\partial \mathbf{n}(\mathbf x) \partial \mathbf{n}(\mathbf y)} \varphi(\mathbf y)ds(\mathbf y)\nonumber\\
 \label{eq:normal_double} 
&=&k^{2}\int_\Gamma G_k(\mathbf x -\mathbf y)
(\mathbf{n}(\mathbf x)\cdot\mathbf{n}(\mathbf y))\varphi(\mathbf y)ds(\mathbf y)+ {\rm PV}
\int_\Gamma \partial_s G_k(\mathbf x -\mathbf y)\partial_s \varphi(\mathbf y)ds(\mathbf y).
\quad
%\nonumber 
\end{eqnarray}
Finally, the single layer operator $S_k$ is defined for a wavenumber $k$ as
\begin{equation}\label{eq:sl}
(S_k\varphi)(\mathbf x):=\int_\Gamma G_k(\mathbf x -\mathbf y)\varphi(\mathbf y)ds(\mathbf y),\ \mathbf{x}\ {\rm on} \ \Gamma
\end{equation} 
for a density function $\varphi$ defined on $\Gamma$. 

We commit here another slight abuse of notation and denote in what follows $
 SL_1,  DL_1, S_1,\  N_1 \ldots, $
for layer potentials and operators corresponding to $k_1$. That is,
\[
 SL_{1}=SL_{k_1}\quad 
 DL_{1}=DL_{k_1},\quad S_1=S_{k_1},\quad \text{etc.}. 
\]
Analogously
\[
 SL_{2}=SL_{k_2}\quad 
 DL_{2}=DL_{k_2},\quad S_2=S_{k_2},\quad \text{etc.}. 
\]
We stress that the context will avoid any possible confusion between  indices and wavenumbers. 
This convention helps us to enhance and lighten the notation. 
Notice that Green identities can be now written in the simple form: 
\begin{equation}\label{eq:Green}
 u^j=(-1)^j SL_j(\gamma_N^j u^j)-(-1)^j DL_j(\gamma_D^j u^j). 
\end{equation} 
Similarly, 
\begin{equation}\label{eq:C1}
 C_j=\tfrac12\begin{bmatrix}I\\ &I\end{bmatrix}+(-1)^j\begin{bmatrix}
                           -K_j & S_j\\
                           -N_j& K_j^\top
                          \end{bmatrix}, \quad 
                          j=1,2
\end{equation}
are the  exterior/interior  Calder\'{o}n  projections associated to the exterior/interior Helmholtz equation: 
\begin{equation}\label{eq:C2}
 C_j^2=C_j,\quad C_j\begin{bmatrix}
                     \gamma_D^j u^j\\
                     \gamma_N^j u^j
                    \end{bmatrix}=
\begin{bmatrix}
                     \gamma_D^j u^j\\
                     \gamma_N^j u^j
                    \end{bmatrix}.
\end{equation}
We recall that from \eqref{eq:C1}-\eqref{eq:C2} one deduces easily 
\begin{equation}\label{eq:calderon}
 S_kN_k=-\frac{1}{4}I + K_k^2,\quad 
 N_kS_k=-\frac{1}{4}I + (K_k^\top)^2,\quad 
 N_k K_k= K_k^\top N_k ,\quad K_kS_k=S_kK_k^\top .
\end{equation}

We end this section noticing that since $u^{inc}$ solves $\Delta v +k_1^2 v=0$ in $D_2$,  it holds 
 \begin{equation}\label{eq:Cinc}
C_1\begin{bmatrix}
                     \gamma_D u^{inc}\\
                     \gamma_N u^{ inc}
                    \end{bmatrix}={\bf 0}.
\end{equation}
and therefore
\begin{equation}\label{eq:Cinc2}
 -SL_1(\gamma_N u^{inc})+DL_1(\gamma_D u^{inc})=0,\quad \text{in }D_1
\end{equation}

%Having recalled the definition of the scattering boundary integral operators, we present next their mapping properties in appropriate Sobolev spaces of functions defined on the curve $\Gamma$.  

\subsection{Boundary integral equations}

Let us introduce first the total field given by
\begin{equation}\label{eq:totalwave}
u^t=\begin{cases}
     u^{1}+u^{inc},&\text{in }D_1\\
     u^{2}&\text{in }D_2
    \end{cases}
\end{equation}
The unknowns in the direct formulations we consider in this paper are 
\begin{equation}
 \label{eq:unknowns}
 \gamma_D u^t =\gamma^1_D(u^1+u^{inc})=\gamma^2_D u^2,\quad  \gamma^1_N u^t=\gamma^1_N(u^1+u^{inc})
\end{equation}
the Dirichet and Neumann data for the total field from the unbounded domain. 
 We could then construct $u^1$, $u^2$ using Green's identities \eqref{eq:Green}. Actually, for the exterior solution it holds as well
\begin{equation}\label{eq:Green2}
 u^1=- SL_1(\gamma_N^1 u^t)+ DL_1(\gamma_D u^t)
\end{equation}
which is consequence of the definition of the  total wave $u^t$ and of the identity \eqref{eq:Cinc2} which ensures that the contribution from $u^{inc}$ in the representation formula cancels out.

 We first have the  following first kind integral equations due to \cite{costabel-stephan} with a positive definite  principal part 
 % and whose two unknowns are
% \begin{equation}
%  \label{eq:unknowns}
%  \gamma^1_D u =\gamma^1_D(u^1+u^{inc})  \gamma^1_N u =\gamma^1_D(u^1+u^{inc})
% \end{equation}
% ($u$ 
% 
% 
% the values of the total exterior field $\gamma^1_D u =\gamma^1_D(u^1+u^{inc})$ and
% its normal derivative $\gamma^1_N u$ on $\Gamma$:
\begin{equation}\label{eq:system_trans_FK}
{\rm CFK} \begin{bmatrix}
 \gamma_D  {u^t}\\
  \gamma^1_N  {u^t}
      \end{bmatrix}:=
\begin{bmatrix}
  -(K_1+K_2) & ({\rho}^{-1} S_2 + S_1) \\
  -(N_1 +{\rho} N_2)&(K^\top_1+K^\top_2)
  \end{bmatrix}\begin{bmatrix}
  \gamma_D{u^t}\\
  \gamma^1_N{u^t}
      \end{bmatrix}= \begin{bmatrix}  
   \gamma_D  u^{inc}\\
  \gamma_N u^{inc}
    \end{bmatrix} .
\end{equation}
This system of integral equations can be easily derived from \eqref{eq:C1}-\eqref{eq:Cinc} and the boundary conditions \eqref{eq:bc}.
% \begin{equation}\label{eq:system_trans_FK}
% \begin{split}
%   -(K_1+K_2)[u](\mathbf x) + (\nu^{-1} S_2 + S_1)\left[\frac{\partial u}{\partial n}\right](\mathbf x)&= u^{inc}(\mathbf x)\\
%   -(N_1 +\nu N_2)[u](\mathbf x)+(K^\top_1+K^\top_2)\left[\frac{\partial
%       u}{\partial n}\right](\mathbf x)&=\frac{\partial
%     u^{inc}}{\partial n}(\mathbf x),
% \end{split}
% \end{equation}>
% ($\mathbf x\in\Gamma$)
% and the
% formulations~(\ref{eq:system_trans_FK}) by CFIE-FK. is one of the mostly used integral formulations of the Helmholtz transmission problem~\eqref{eq:Ac_i}-\eqref{eq:bc}. %The unique solvability of the first-kind
%system~\eqref{eq:system_trans_FK} in the functional spaces $H^{s+1/2}(\Gamma)\times H^{s-1/2}(\Gamma),\ 0\leq s\leq \frac{1}{2}$ was established in~\cite{costabel-stephan}.
Obviously, these integral equations are understood in the a.e. sense.
We refer to the system of boundary integral equations~\eqref{eq:system_trans_FK} as CFIEFK for {\em combined field integral equation of first kind}.

%Besides the well established formulation~\eqref{eq:system_trans_FK}, we investigate the behavior of two other %formulations whose well-posedness was rigorously established in the case of smooth interfaces $\Gamma$. 
%In the case of Lipschitz interfaces, the well-posednees of those formulations is still an open problem. 

A widely used first kind boundary integral formulation of the transmission problem~\eqref{eq:Ac_i}-\eqref{eq:bc} is due to Kress and Roach cf. \cite{KressRoach} and it consists of the following pair of integral equations:
\begin{equation}\label{eq:system_trans}
{\rm CSK}\begin{bmatrix}
 \gamma_D{u^t}\\
  \gamma^1_N{u^t}
      \end{bmatrix}:=
      \Bigg(\frac{{\rho}^{-1}+1}{2}\begin{bmatrix}
                                 I&\\
                                 &I
                                \end{bmatrix}
                                +\begin{bmatrix}
                                   (K_2-{\rho}^{-1}K_1)&{\rho}^{-1} (S_1 - S_2)\\
                                   -(N_1 - N_2)&(K_1^\top-{\rho}^{-1} K^\top_2)
                                 \end{bmatrix}
\Bigg)\begin{bmatrix}
 \gamma_D{u^t}\\
  \gamma^1_N{u^t}
      \end{bmatrix}= \begin{bmatrix}  
      {\rho}^{-1}\gamma_D  u^{inc}\\
  \gamma_N u^{inc}
    \end{bmatrix}.
\end{equation}
% \begin{equation}\label{eq:system_trans}
% \begin{array}{rcl}
%  \displaystyle
%   \frac{\rho^{-1}+1}{2}u(\mathbf x) + (K_2-\rho^{-1}K_1)[u](\mathbf x) +\rho^{-1} (S_1 - S_2)\left [\frac{\partial u}{\partial n}\right](\mathbf x)&=& \displaystyle\rho^{-1} u^{inc}(\mathbf x) \\
%    \displaystyle
%   \frac{\rho^{-1}+1}{2}\frac{\partial u}{\partial n}(\mathbf x)  -(N_1 - N_2)[u](\mathbf x)+ (K_1^\top-\rho^{-1} K^\top_2)\left[\frac{\partial u}{\partial n}\right](\mathbf x)&=& \displaystyle\frac{\partial u^{inc}}{\partial n}(\mathbf x),
% \end{array}
% \end{equation}
%($\mathbf x\in\Gamma$).  
In what follows we refer to the integral equations~\eqref{eq:system_trans} as CFIESK ({\em combined field integral equation of second kind}).
%We note that the two equations presented above are direct formulations in the sense that the unknowns are physical quantities.

We introduced recently Generalized Combined Source Integral Equation (GCSIE) formulations of transmission problems~\cite{turc2,turc3} and we established their well-posedness in the case when $\Gamma$ is regular enough. We consider here a {\em direct} counterpart of the GCSIE formulations which can be obtained by combining the previous formulations in the form
\begin{subequations}
\label{eq:GFK}
\begin{equation}
\label{eq:GFK1}
\bigg(\frac{\rho}{\rho+1} {\rm CSK}+\frac{2}{1+\rho}\begin{bmatrix}
      &S_{\kappa}\\
      -\rho N_{\kappa}
      \end{bmatrix}{\rm CFK}\bigg)\begin{bmatrix}
 \gamma_D{u^t}\\
  \gamma^1_N{u^t}
      \end{bmatrix}=\frac{1}{\rho+1}\begin{bmatrix}
      I&2S_{\kappa}\\
      -2\rho N_\kappa&\rho I
      \end{bmatrix} \begin{bmatrix}  
   \gamma_D  u^{inc}\\
  \gamma_N u^{inc}
    \end{bmatrix}.
\end{equation} 
Here, $\kappa$ is a complex wave number with positive imaginary part. This parameter can be appropriately taken to improve the spectrum of the underlying operator. This leads  to faster convergence, when discretized, of Kyrlov iterative methods, such as GMRES, for the linear system 
(see subsection 5.4). 
Defining
\[
%\begin{equation}\label{eq:R}
 {\rm R}:=\frac{1}{\rho+1}\begin{bmatrix}
      I&2S_{\kappa}\\
      -2\rho N_\kappa&\rho I
      \end{bmatrix} 
%\end{equation}
\]
we observe that \eqref{eq:GFK1} can be  written as 
\begin{equation}
\label{eq:GFK2} {\rm GFK} \begin{bmatrix} 
 \gamma_D{u^t}\\
  \gamma^1_N{u^t}
      \end{bmatrix} :=
\begin{bmatrix}
 \frac{1}2I+K_2&-\rho^{-1}S_2\\
\rho N_2& \frac{1}2I
-K_2^\top&
\end{bmatrix}\begin{bmatrix}
 \gamma_D{u^t}\\
  \gamma^1_N{u^t}
      \end{bmatrix}+{\rm R}\:{\rm CFK}\begin{bmatrix}
 \gamma_D{u^t}\\
  \gamma^1_N{u^t}
      \end{bmatrix}= {\rm R}     \begin{bmatrix}  
   \gamma_D  u^{inc}\\
  \gamma_N u^{inc}
    \end{bmatrix}.
\end{equation}
\end{subequations}
We will refer to this formulation as  {\em Regularized Combined Field Integral Equations} (CFIER).

Another possible formulation of the transmission problem~\eqref{eq:Ac_i}-\eqref{eq:bc} takes on the form of {\em single} integral equations~\cite{KleinmanMartin}. 
%Amongst several possible choices of such equations, we consider the following version (equation (7.4) in~\cite{KleinmanMartin}) which we have observed in practice to lead to better spectral properties, at least in the case of smooth interfaces $\Gamma$. 
The main idea is to look for the field $u^2$ as a single layer potential, that is
\[
u^2(\mathbf{z})=-2[SL_2\mu](\mathbf{z}),\ \mathbf{z}\in D_2,
\] 
where $\mu$ is an {\em unphysical} density defined on $\Gamma$.  The transmission boundary conditions~\eqref{eq:bc} and the trace relations \eqref{traces} imply first
\[
 \gamma_D   u^t=-2S_2\mu,\quad 
 \gamma_N^1  u^t=-\rho(I+2K_2^\top)\mu, 
\]
and second, from \eqref{eq:Green2}, 
\[
u^1(\mathbf{z})=\rho SL_1[(I+2K_2^\top)\mu](\mathbf{z})-2DL_1[S_2\mu](\mathbf{z}),\quad \mathbf{z}\in D_1.
\]  
Using these representations of the fields $u^1$ and $u^2$, the Neumann and Dirichlet traces of $u^1$ and $u^2$ on $\Gamma$ are used in a Burton-Miller type combination of the form  
$(\gamma_N^1 u^1-i\eta  {\gamma_D^1} u^1)-(\rho\gamma_N^2 u^2-i\eta {\gamma_D^2} u^2)= {-\gamma_Nu^{inc}+i\eta \gamma_D u^{inc}}$ cf~\cite{BurtonMiller} to lead to the following boundary integral equation  
\begin{equation}\label{eq:single}
{\rm SIE}\:\mu:=-\frac{1+\rho}{2}\mu+\mathbf{K}\mu-i\eta\mathbf{S}\mu=
 {\gamma_N u^{inc}}-i\eta  {\gamma_D u^{inc}}
%\frac{\partial u^{inc}}{\partial n}-i\eta u^{inc}
,\quad 0\ne \eta\in\mathbb{R}
\end{equation}
where
$$\mathbf{K}=-K_2^\top(\rho I-2K_2^\top)-\rho K_1^\top(I+2K_2^\top)+2(N_1-N_2)S_2$$
and
$$\mathbf{S}=-\rho S_1(I+2K_2^\top)-(I-2K_1)S_2,$$

We refer in what follows to equation~\eqref{eq:single} as SCFIE. The coupling parameter $\eta$ in equations~\eqref{eq:single} is typically taken to be equal to $k_1$.

\section{Existence and uniqueness for the boundary integral formulations CFIEFK~\eqref{eq:system_trans_FK}, CFIESK~\eqref{eq:system_trans}, CFIER~\eqref{eq:GFK}, and SCFIE~\eqref{eq:single}}\label{well-posedness}

In this section we state the well-posedness of the boundary integral formulations {on Lipschitz curves} presented in previous section. {Let us point out that, with very minor and direct modifications, the proofs of the main results of this section can be adapted to the case of Lipschitz domains in $3D$. Hence, these formulations are well-posed as well, and the  associated integral operators satisfy the same properties in the Sobolev frame. Since in this paper we have focused our attention to bidimensional domains, we have preferred to work only on Lipschitz curves for the sake of simplicity.

One of the basic tools we will use in the analysis developed in this section consists in comparing the boundary operators for the Helmholtz equation with those for Laplace operator, $S_0, K_0, K_0^\top,$ and $N_0$  which are defined as the corresponding boundary Helmholtz operators in section 3 using 
\[
G_0({\bf x})=-\frac{1}{2\pi}\log|{\bf x}|  ,
\]
the fundamental solution of the Laplace operator, instead. 
}

\subsection{Sobolev spaces and functional properties of boundary operators}
For any $D\subset\mathbb{R}^2$ domain with bounded Lipschitz boundary $\Gamma$,  we denote by $H^s(D)$ the classical Sobolev space of order $s$ on $D$ 
(see for example in the excellent textbooks \cite[Ch. 3]{mclean:2000} or \cite[Ch. 2]{adams:2003}). 
%The space of functions, or distributions for $s<0$, of the elements which belong locally to $H^s(D)$ will be denoted by  $H^s_{\rm loc}(D)$. 
We consider in addition the Sobolev spaces defined on the boundary $\Gamma$,  $H^s(\Gamma)$, which are well defined for any $s\in[-1,1]$. It is well known that for  smoother  $\Gamma$ the range for these $s$ can be widened but we do not make use of these spaces in this section. We recall that for any $s>t$, $H^s(\Sigma)\subset H^t(\Sigma)$, $\Sigma\in\{D_1,D_2,\Gamma\}$ with  continuous and compact embedding. Moreover, and
$\big(H^t(\Gamma)\big)'=H^{-t}(\Gamma)$ when the inner product of $H^0(\Gamma)=L^2(\Gamma)$ is used as duality product.

It is well known that $\gamma_D^j: H^{s+1/2}(D_j)\to H^s(\Gamma)$ is continuous for $s\in(0,1)$ and if
\[
 H^s_\Delta(D_j):=\left\{U\in H^{s}(D_j)\ :\ \Delta U\in L^2(D_j) \right\},
\]
endowed with its natural norm, 
then $\gamma_N:  H^s_\Delta(D_j)\to H^{s-3/2}(\Gamma)$ is continuous  for  $s\in(1/2,3/2)$.

The space $H^1(\Gamma)$, and its dual $H^{-1}(\Gamma)$, are then the limit case from several different perspectives. Let $\nabla_\Gamma$ be the tangential derivative defined as 
\[
 \nabla_\Gamma (U|_\Gamma) =                           
\partial_s (U|_\Gamma)\, \bm{\tau}\quad
\]
where $\bm{\tau}$ denotes (one of) the unit tangent vector fields to $\Gamma$, and $\partial_s$ the tangential derivative with respect to ${\bm \tau}$. It is known  that an integral expression for the (or an equivalent) inner product {in $H^1(\Gamma)$} is given by
\begin{eqnarray}\label{eq:normH1}
(u,v)_{H^1(\Gamma)}:=\int_{\Gamma} \nabla_\Gamma u \cdot \overline{\nabla_\Gamma v} +\int_{\Gamma} u\overline{v}
&=&
%(\bm{\Lambda}u, \bm{\Lambda}v )_{H^0(\Gamma)},\quad \bm{\Lambda}u:=\nabla_\Gamma u+ u \,{\bf n}
\langle \bm{\Lambda}u, \overline{\bm{\Lambda}v}\rangle,\quad \bm{\Lambda}u:=\nabla_\Gamma u+ u \,{\bf n}
\end{eqnarray}
%(We commit here a slight abuse of notation, and denote with the same symbol $(\cdot, \cdot)_{H^0(\Gamma)}$ the integral inner product in $\big(H^0(\Gamma)\big)^2$.)
We commit here a slight abuse of notation, and denote with the same symbol  $\langle \cdot,\cdot \rangle$ the non-complex integral inner product in $\big(L^2(\Gamma)\big)^2$.

\begin{theorem}
Let  $D_2$ be a bounded domain, with Lipschitz boundary $\Gamma$, and set $D_1:=\mathbb{R}^2\setminus \overline{D}_2$. Then, if $\chi\in{\cal C}^\infty( {\mathbb{R}^2})$ with compact support, 
\begin{itemize}
\item $SL_k:H^{s}(\Gamma)\to H^{s+3/2}(D_2)$, $\chi SL_k:H^{s}(\Gamma)\to H^{s+3/2}(D_1)$
\item $DL_k:H^{s+1}(\Gamma)\to H^{s+3/2}(D_2)$, $\chi DL_k:H^{s+1}(\Gamma)\to H^{s+3/2}(D_1)$
\end{itemize}
are continuous for $s\in[-1,0]$. 
Moreover 
\begin{itemize}
\item $S_k:H^{s}(\Gamma)\to H^{s+1}(\Gamma)$
\item $K_k:H^{s+1}(\Gamma)\to H^{s+1}(\Gamma)$
\item $K^\top_k:H^{s}(\Gamma)\to H^{s}(\Gamma)$
\item $N_k:H^{s+1}(\Gamma)\to H^{s}(\Gamma)$
\end{itemize}
are continuous for $s\in[-1,0]$.
\end{theorem}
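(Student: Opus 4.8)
The plan is to reduce every assertion to the corresponding statement for the Laplace kernel $G_0$ and then to borrow the sharp $L^2$ theory of layer potentials on Lipschitz boundaries. The engine of the reduction is the smoothness of the difference kernel. From the small-argument expansion of the Hankel function one has $G_k(\mathbf x)-G_0(\mathbf x)=a(|\mathbf x|^2)+b(|\mathbf x|^2)\log|\mathbf x|$ with $a,b$ entire and $b(0)=0$; hence $G_k-G_0$ extends across the diagonal as a $C^1$ function whose gradient vanishes there, and its second derivatives are at worst logarithmically singular. Consequently the difference kernels defining $S_k-S_0$, $K_k-K_0$, $K_k^\top-K_0^\top$ and $N_k-N_0$ are at least weakly singular, so these differences are compact (indeed smoothing) between all the Sobolev pairs appearing in the statement and do not alter any claimed continuity. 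The same remark applies to the volume potentials: $SL_k-SL_0$ and $DL_k-DL_0$ have kernels smooth across the diagonal and map $H^{s}(\Gamma)$, respectively $H^{s+1}(\Gamma)$, into $C^\infty$ functions on every bounded subset of $\mathbb R^2$. Thus it suffices to treat $k=0$.

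For the Laplace case I would quote the Lipschitz layer-potential theory of Verchota \cite{Ver:1984}, packaged in the Sobolev scale as in \cite{mclean:2000,costabel-stephan}: the double layer and its adjoint are bounded on $L^2(\Gamma)$, the single layer realizes the local gain $H^{s}(\Gamma)\to H^{s+3/2}_{\mathrm{loc}}$, and the variational (energy) estimates give $S_0:H^{-1/2}(\Gamma)\to H^{1/2}(\Gamma)$, $K_0,K_0^\top:H^{\pm1/2}\to H^{\pm1/2}$, and $N_0:H^{1/2}\to H^{-1/2}$. Interpolating these energy-space bounds against the $L^2$ endpoints and using the $L^2(\Gamma)$ duality $\big(H^{t}(\Gamma)\big)'=H^{-t}(\Gamma)$ recorded in the text then fills the full range $s\in[-1,0]$ for $S_0,K_0,K_0^\top$. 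The boundary estimates of $S_0$ transfer to the potentials through the trace and jump relations \eqref{traces}, the half-order gain being exactly the nontangential maximal-function content of \cite{Ver:1984}; the exterior statements carry the cutoff $\chi$ because the local regularity near $\Gamma$ combines with interior elliptic regularity away from $\Gamma$, the growth at infinity being irrelevant once one restricts to a bounded set.

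For the hypersingular operator $N_k$ I would sidestep any direct harmonic analysis of its kernel and instead exploit the tangential-derivative representation \eqref{eq:normal_double}. Writing, schematically, $N_k\varphi=\partial_s\,S_k(\partial_s\varphi)+k^2\sum_i n_i\,S_k(n_i\varphi)$ and using $\partial_s:H^{s+1}(\Gamma)\to H^{s}(\Gamma)$ together with the already established $S_k:H^{s}(\Gamma)\to H^{s+1}(\Gamma)$, the first term maps $H^{s+1}\to H^{s}$ on the whole range. For the zeroth-order term the only delicate point is that the normal $\mathbf n$ of a Lipschitz curve is merely $L^\infty$, which prevents multiplication by $n_i$ from preserving $H^{s+1}$ when $s+1>0$; one therefore estimates $n_i\varphi$ only in $L^2$, applies $S_k:L^2\to H^1$, and reabsorbs the factor $n_i$, landing in $L^2\hookrightarrow H^{s}$ for $s\le0$. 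Collecting the pieces gives $N_k:H^{s+1}(\Gamma)\to H^{s}(\Gamma)$.

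The step I expect to be the genuine obstacle is not contained in the bookkeeping above but in the $L^2(\Gamma)$ boundedness of $K_0$ and $K_0^\top$ on an arbitrary Lipschitz curve—equivalently the boundedness of the Cauchy integral—together with the \emph{sharpness} of the range $s\in[-1,0]$ beyond which these estimates fail without extra boundary regularity. These are precisely the deep results I import from \cite{Ver:1984}. Once that machinery is granted, the remaining work is routine: the kernel-smoothness reduction to $k=0$, interpolation, $L^2(\Gamma)$ duality, and the tangential-derivative identity for $N_k$.
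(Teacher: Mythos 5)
Your overall architecture coincides with the paper's: reduce to $k=0$ by showing that $G_k-G_0$ is $C^1$ across the diagonal with at worst logarithmically singular second derivatives (this is exactly the content of the paper's Lemma~\ref{lemma:kernels} and Theorem~\ref{theo:regularity}, which establish that the differences $S_{k_1}-S_{k_2}$, $K_{k_1}-K_{k_2}$, $K^\top_{k_1}-K^\top_{k_2}$, $N_{k_1}-N_{k_2}$ gain regularity and are compact on the relevant pairs), and then invoke the Lipschitz harmonic analysis of \cite{Ver:1984} for the Laplace case. Your treatment of $N_k$ through the tangential-derivative identity \eqref{eq:normal_double} and the $L^\infty$ normal is also sound once the mapping properties of $S_k$ are in hand.

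The gap is in the claim that ``interpolating these energy-space bounds against the $L^2$ endpoints \dots fills the full range $s\in[-1,0]$.'' Take $K_0$: you list $K_0$ bounded on $H^0(\Gamma)$ and on $H^{1/2}(\Gamma)$, and $K_0^\top$ bounded on $H^0(\Gamma)$ and $H^{-1/2}(\Gamma)$. Interpolation gives $K_0$ on $H^t(\Gamma)$ only for $t\in[0,1/2]$, and dualizing the $K_0^\top$ bounds returns the same interval; nothing in your list reaches $t=1$. The endpoint statements $K_0:H^1(\Gamma)\to H^1(\Gamma)$, $S_0:L^2(\Gamma)\to H^1(\Gamma)$ and $N_0:H^1(\Gamma)\to L^2(\Gamma)$ (equivalently, by duality and self-adjointness of $S_0,N_0$, the case $s=-1$) are \emph{not} consequences of $L^2$ boundedness plus the variational estimates; they are separate theorems of Verchota, proved by commuting tangential derivatives with the layer potentials and applying the Coifman--McIntosh--Meyer theorem to the resulting singular integrals. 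This is precisely why the paper cites \cite{Ver:1984} specifically for $s=-1,0$ and \cite{MR937473} only for the open interval $s\in(-1,0)$, and the missing endpoint $s=0$ is the one the paper actually uses downstream (well-posedness in $H^1(\Gamma)\times H^0(\Gamma)$, which drives the regularity of $\gamma_D u^t$ and the Nystr\"om discretization). To close the argument you must either add these endpoint bounds to the list of results imported from \cite{Ver:1984} or supply the tangential-derivative commutation argument; ``sharpness of the range'' is a statement about where the bounds fail, not a substitute for their validity at $s=0$ and $s=-1$. A minor further overstatement: $SL_k-SL_0$ and $DL_k-DL_0$ do not map into $C^\infty$ functions near $\Gamma$, since $G_k-G_0$ behaves like $|\mathbf{x}|^2\log|\mathbf{x}|$ and is not $C^2$ at the origin; what is true, and sufficient here, is that they gain enough local Sobolev regularity to land in $H^{s+3/2}_{\mathrm{loc}}$ for $s\le 0$.
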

A proof for the intermediate values $s\in(-1,0)$ can be found in \cite[Th 1]{MR937473} (see also \cite[Th 6.12]{mclean:2000}). 
The proof for 
$k=0$, the Laplace operator, and $s=-1,0$ can be found in \cite{Ver:1984} (see also the comments following Th 6.12 in \cite{mclean:2000}). For $k\ne 0$ the argument follows by showing that the difference between the corresponding Laplace and Helmholtz boundary  operators are smooth enough.  We refer to the discussion  at the end of Chapter 6 in \cite{mclean:2000} or that following Theorem 1 in
\cite{MR937473} and references therein. For the sake of completeness, we will give next a proof   of this result. 
This result will be actually used to prove the well posedness of the different formulations considered in this paper.  

We will need first this technical lemma which will be proven for the sake of completeness.

\begin{lemma}\label{lemma:kernels}
The integral operators 
\[
 \bm{\Lambda} (K_{k_1}-K_{k_2}),\quad  N_{k_1}-N_{k_2}
\]
have  weakly (integrable) singular kernels. 
\end{lemma}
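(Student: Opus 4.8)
The plan is to show that the two differences listed in Lemma~\ref{lemma:kernels} have kernels that are only weakly (logarithmically) singular by exploiting the cancellation of the strongest singularities when the Laplace ($k=0$) and Helmholtz ($k\ne 0$) fundamental solutions are subtracted. The guiding principle is that the difference $G_{k_1}-G_{k_2}$ is a smooth function (indeed analytic) across the diagonal $\mathbf x=\mathbf y$, because the logarithmic singularities of $\frac{i}{4}H_0^{(1)}(k_j|\mathbf x-\mathbf y|)$ are \emph{independent of $k_j$}; only the higher-order terms in the small-argument expansion of $H_0^{(1)}$ carry the wavenumber, and those contribute at worst $|\mathbf x-\mathbf y|^2\log|\mathbf x-\mathbf y|$ terms. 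Differentiating this smooth remainder the requisite number of times will leave kernels that remain integrable on a Lipschitz curve.

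First I would recall the small-argument asymptotics of the Hankel function, writing $\frac{i}{4}H_0^{(1)}(k r)=-\frac{1}{2\pi}\log r+g_k(r)$ for $r=|\mathbf x-\mathbf y|$, where $g_k$ is an even, real-analytic function of $r^2$ whose dependence on $k$ enters only through terms of order $r^2\log r$ and higher. Consequently $G_{k_1}-G_{k_2}=g_{k_1}-g_{k_2}$ has no logarithmic singularity at all, and its leading behavior near the diagonal is $O(r^2\log r)$. This is the key cancellation. I would then address the two operators separately. For $N_{k_1}-N_{k_2}$, I would use the Maue/tangential representation already recorded in~\eqref{eq:normal_double}: the operator splits into a term $k^2 S_k(\mathbf n\cdot\mathbf n\,\cdot)$ and a hypersingular-looking term $\mathrm{PV}\int\partial_s G_k\,\partial_s\varphi$. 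Upon subtracting the $k_1$ and $k_2$ versions, the principal-value term involves $\partial_s(G_{k_1}-G_{k_2})$, and since $G_{k_1}-G_{k_2}$ is $O(r^2\log r)$, one tangential derivative produces a kernel of order $r\log r$, which is weakly singular; the remaining $k_j^2 S_{k_j}$ pieces already have weakly singular (logarithmic) kernels.

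For $\bm{\Lambda}(K_{k_1}-K_{k_2})$ I would proceed analogously: $K_k$ has kernel $\partial_{\mathbf n(\mathbf y)}G_k(\mathbf x-\mathbf y)$, so $K_{k_1}-K_{k_2}$ has kernel $\partial_{\mathbf n(\mathbf y)}(g_{k_1}-g_{k_2})$, which is $O(r\log r)$ — already weakly singular. Applying $\bm{\Lambda}=\nabla_\Gamma+\mathbf n$ adds the multiplier $\mathbf n$ (harmless) and one tangential derivative $\partial_s$ in the $\mathbf x$ variable. Differentiating the $O(r\log r)$ kernel once more yields a kernel of order $\log r$, again weakly singular. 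The main obstacle is the careful bookkeeping in the $N_k$ case: one must confirm that no genuine singularity survives after the difference is taken, paying attention to the geometric factor $\mathbf n(\mathbf x)\cdot\mathbf n(\mathbf y)$ and to the fact that on a merely Lipschitz curve the normal $\mathbf n$ is only $L^\infty$, so the argument must rely solely on the \emph{smoothness of the difference kernel in $r$} rather than on any smoothness of $\Gamma$. I would make the estimates uniform in $\mathbf x$ by bounding $|g_{k_1}(r)-g_{k_2}(r)|$ and its first two $r$-derivatives by $C\,r^2|\log r|$, $C\,r|\log r|$, and $C|\log r|$ respectively on a bounded range of $r$, from which weak integrability over $\Gamma$ (a curve of finite length) follows immediately.
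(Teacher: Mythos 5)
Your core idea is the same as the paper's: the logarithmic singularities of $G_{k_1}$ and $G_{k_2}$ cancel, the difference behaves like $r^2\log r$ near the diagonal, and the bounds $C\,r^2|\log r|$, $C\,r|\log r|$, $C|\log r|$ on the difference and its first two radial derivatives are exactly what the paper establishes (its estimate \eqref{eq:new01:lemma42} on $\nabla(G_{k_1}-G_0)$ and the Hessian $\nabla^2(G_{k_1}-G_0)$, after the harmless reduction to $k_2=0$). The treatment of $\bm{\Lambda}(K_{k_1}-K_{k_2})$ matches the paper's, including the correct observation that $\mathbf{n}$ and $\bm{\tau}$ enter only as $L^\infty$ multipliers, which is all a Lipschitz curve affords. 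Two caveats. First, your opening claim that $G_{k_1}-G_{k_2}$ is ``smooth (indeed analytic) across the diagonal'' is an overstatement: the $r^2\log r$ term makes it $C^1$ but not $C^2$ at $r=0$; your subsequent analysis correctly relies only on the $O(r^2\log r)$ behavior, so nothing downstream breaks, but the phrase should go. Second, for $N_{k_1}-N_{k_2}$ you route through the Maue form \eqref{eq:normal_double}, whose principal-value term acts on $\partial_s\varphi$ rather than on $\varphi$; showing that \emph{that} kernel is $O(r\log r)$ does not yet exhibit a weakly singular kernel for the operator $N_{k_1}-N_{k_2}$ itself (which is what the lemma asserts and what Theorem \ref{theo:regularity} needs, namely $L^2\to L^2$ compactness via Lax's theorem). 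The paper instead works directly from the finite-part form \eqref{eq:normal_double:0}: for the difference, the kernel is $-\mathbf{n}^\top(\mathbf{x})\,\nabla^2(G_{k_1}-G_{k_2})(\mathbf{x}-\mathbf{y})\,\mathbf{n}(\mathbf{y})$, an ordinary integral with an $O(|\log r|)$ kernel. Your second-derivative bound already gives this; you just need to apply it to the Hessian form (or integrate by parts once more in the PV term) rather than stopping at the $O(r\log r)$ estimate.
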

\begin{proof}
Without loss of generality we can assume in this proof that  $k_2=0$.  
The kernel of $ \bm{\Lambda} (K_{k_1}-K_{0})$ is then given (a.e.) by
\[
-{\bf n}({\bf x})\left[ \left(\nabla(G_{k_1}-G_0)( {\bf x}-{\bf y})\right)\cdot {\bf n}({\bf y})\right]
-{\bm\tau}({\bf x})\left[{\bm \tau}^{\top}({\bf x})\left(\nabla^2(G_{k_1}-G_0)({\bf x}-{\bf y})\right) {\bf n}({\bf y})\right],
\quad {\bf x},{\bf y}\in\Gamma. 
\]
( $\nabla^2 G$ above denotes the Hessian matrix of $G$),
whereas  
\[
 -{\bf n}^{\top}({\bf x})\left(\nabla^2(G_{k_1}-G_0)({\bf x}-{\bf y})\right) {\bf n}({\bf y})
\]
is  the kernel of $N_{k_1}-N_{0}$.

Then it suffices to show that for any $R>0$ there exists
$C_R>0$ such that 
\begin{equation}\label{eq:new01:lemma42}
 |\nabla (G_{k_1}-G_0)({\bf x})|+\|\nabla^2(G_{k_1}-G_0)({\bf x})\| \le C_R(1+\log|{\bf x}|),\quad  \text{for } 0<|{\bf x}|<R.
\end{equation}
where $\|\cdot\|$ above is any matrix norm.  

The proof of this  bound relies on analytical properties of the Hankel functions, namely, the behavior at zero and appropriate decompositions of these functions. We point out that a deeper analysis on this topic will be carried out in Section~\ref{singular_int}, so that we limit ourselves the exposition of the 
properties we are going to use. 

With the identity $( H_0^{(1)})'(z)=- {H_1^{(1)}}(z)$ we can obtain first
\begin{equation}\label{eq:new02:lemma42}
 \nabla (G_{k_1}-G_0)({\bf x})=\left(-\frac{i}{4}  H_1^{(1)}(k_1|{\bf x}|){k_1}|{\bf x}|+\frac{1}{2\pi}\right)
 \frac{{\bf x}^\top}{|{\bf x}|^2}
\end{equation}
(we follow the convention of writing the gradient as a row vector). Note now that
\[
\frac{i}4 z {H^{(1)}_1(z)} =-\frac{1}{2\pi}J_1(z) z \log z+\frac{1}{2\pi}+d_1(z)z^2
\]
where $J_1$ is the Bessel function which is known to be smooth,  with $J_1(0)=0$, $J_1'(0)=1/2$  and 
 {$d_1$} being smooth as well. From this decomposition we deduce easily that $\nabla (G_{k_1}-G_0)$ satisfies \eqref{eq:new01:lemma42}.

On the other hand, using that
\[
 {\left( zH_1^{(1)}(z)\right)'= z H_0^{(1)}(z)}. 
\]
we can easily show that 
\begin{eqnarray*}
 \nabla^2(G_{k_1}-G_0)({\bf x})&=& -\frac{i{k_1^2}}{4}H_0^{(1)}(k_1|{\bf x}|)\frac{1}{|{\bf x}|^2}
{\bf x} \: {\bf x} ^{\top}+\left(\frac{i}{4} H_1^{(1)}(k_1|{\bf x}|)k_1|{\bf x}|-\frac1{2\pi}\right)
 \left(\frac{2}{|{\bf x}|^4}{\bf x}\:  {\bf x} ^{\top} - \frac{1}{|{\bf x}|^2}I\right)
\end{eqnarray*}
where $I$ is $2\times 2$ identity matrix. 

 Since in addition
\[
\frac{i}4   H_0^{(1)}(z) =-\frac{1}{2\pi}J_0(z)  \log z+c_0(z)
\]
where $J_0$ (the Bessel function) and $c_0$ are  smooth,
a simple inspection shows that the entries of the Hessian matrix  can be bounded by $C \log(|{\bf x}|)$ on any punctured ball around ${\bf 0}$ with appropriate constant $C$ (which depends obviously on ${k_1}$ and on the diameter of the domain).
\end{proof}

\begin{theorem}\label{theo:regularity} Let $k_1\ne k_2$. Then  
\begin{itemize}
\item $S_{k_1}-S_{k_2}:H^{-1}(\Gamma)\to H^{1}(\Gamma)$
\item $K_{k_1}-K_{k_2}:H^{0}(\Gamma)\to H^{1}(\Gamma)$
\item $K^\top_{k_1}-K^\top_{k_2}:H^{-1}(\Gamma)\to H^{0}(\Gamma)$
\item $N_{k_1}-N_{k_2}:H^{0}(\Gamma)\to H^{0}(\Gamma)$.
\end{itemize}
% In particular
% \[
% \begin{align*}
% S_{k_1}-S_{k_2}&:H^{-1/2}(\Gamma)\to H^{1/2}(\Gamma),&\quad
%  K_{k_1}-K_{k_2}&:H^{1/2}(\Gamma)\to H^{1/2}(\Gamma),\\
%   K^\top_{k_1}-K^\top_{k_2}&:H^{-1/2}(\Gamma)\to H^{-1/2}(\Gamma),&\quad
% N_{k_1}-N_{k_2}&:H^{1/2}(\Gamma)\to H^{-1/2}(\Gamma)
% \end{align*}
% \]
are continuous and compact. 
\end{theorem}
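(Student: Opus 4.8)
The plan is to deduce all four statements from the pointwise estimates on the difference kernel $G_{k_1}-G_{k_2}$ already produced in the proof of Lemma~\ref{lemma:kernels}: that $G_{k_1}-G_{k_2}$ extends to a bounded (continuous) function across the diagonal, that its gradient is $O(|\mathbf x|\log|\mathbf x|)$, and that its Hessian obeys the logarithmic bound \eqref{eq:new01:lemma42}. The Bessel-function decompositions used there with $k_2=0$ apply verbatim for general $k_2\neq k_1$. The recurring mechanism is that each difference operator is one order smoother than the corresponding single-wavenumber operator from the preceding continuity theorem, and that this extra smoothing is what converts continuity into compactness. I will use the representation \eqref{eq:normH1} of the $H^1(\Gamma)$ inner product as $(u,v)_{H^1(\Gamma)}=\langle\bm\Lambda u,\overline{\bm\Lambda v}\rangle$ repeatedly, since it makes $\bm\Lambda$ an isometry of $H^1(\Gamma)$ onto a closed subspace of $(L^2(\Gamma))^2$; consequently a map into $H^1(\Gamma)$ is bounded (resp. compact) if and only if its composition with $\bm\Lambda$ is bounded (resp. compact) into $(L^2(\Gamma))^2$.

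The operator $N_{k_1}-N_{k_2}$ is the most direct: by Lemma~\ref{lemma:kernels} it has a weakly singular kernel, and an integral operator with such a (here logarithmic) kernel on the bounded curve $\Gamma$ is compact on $L^2(\Gamma)=H^0(\Gamma)$, which gives the last bullet. For $K_{k_1}-K_{k_2}$ I would invoke the other half of Lemma~\ref{lemma:kernels}: $\bm\Lambda(K_{k_1}-K_{k_2})$ has a weakly singular kernel, hence is compact from $H^0(\Gamma)$ into $(L^2(\Gamma))^2$. Since $\varphi\in H^0(\Gamma)$ already gives $(K_{k_1}-K_{k_2})\varphi\in H^0(\Gamma)$ from the single-wavenumber mapping properties, together with $\nabla_\Gamma(K_{k_1}-K_{k_2})\varphi\in L^2(\Gamma)$, the image lies in $H^1(\Gamma)$, and the isometry \eqref{eq:normH1} transfers compactness to $K_{k_1}-K_{k_2}:H^0(\Gamma)\to H^1(\Gamma)$. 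The adjoint bullet $K^\top_{k_1}-K^\top_{k_2}:H^{-1}(\Gamma)\to H^0(\Gamma)$ then follows for free by $L^2$-duality: $K^\top_k$ is the transpose of $K_k$ with respect to $\langle\cdot,\cdot\rangle$, and the adjoint of the compact map $H^0(\Gamma)\to H^1(\Gamma)$ is a compact map $(H^1(\Gamma))'\to(H^0(\Gamma))'$, i.e. $H^{-1}(\Gamma)\to H^0(\Gamma)$.

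The single-layer bullet $S_{k_1}-S_{k_2}:H^{-1}(\Gamma)\to H^1(\Gamma)$ is the crux and the step I expect to be the main obstacle, because it demands a full two-order gain of regularity that cannot be produced by transposition from the $K$-estimate: the operator is symmetric, so its transpose is again a map $H^{-1}(\Gamma)\to H^1(\Gamma)$. Here I would argue directly at the level of the kernel $g(\mathbf x,\mathbf y):=(G_{k_1}-G_{k_2})(\mathbf x-\mathbf y)$. Using the isometry \eqref{eq:normH1} it suffices to prove that $\bm\Lambda(S_{k_1}-S_{k_2})$ maps $H^{-1}(\Gamma)$ boundedly and compactly into $(L^2(\Gamma))^2$, that is, that both $(S_{k_1}-S_{k_2})\varphi$ and $\partial_s(S_{k_1}-S_{k_2})\varphi$ lie in $L^2(\Gamma)$ with control by $\|\varphi\|_{H^{-1}(\Gamma)}$. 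Writing the action as the duality pairings $(S_{k_1}-S_{k_2})\varphi(\mathbf x)=\langle g(\mathbf x,\cdot),\varphi\rangle$ and $\partial_s(S_{k_1}-S_{k_2})\varphi(\mathbf x)=\langle\bm\tau(\mathbf x)\cdot\nabla_{\mathbf x}g(\mathbf x,\cdot),\varphi\rangle$, the estimates reduce to showing that, for fixed $\mathbf x$, both $g(\mathbf x,\cdot)$ and $\bm\tau(\mathbf x)\cdot\nabla_{\mathbf x}g(\mathbf x,\cdot)$ belong to $H^1(\Gamma)$ in the $\mathbf y$ variable, uniformly and continuously in $\mathbf x$. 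This is exactly where the bounds of Lemma~\ref{lemma:kernels} enter: $g$ is bounded, $\nabla g$ is $O(|\mathbf x-\mathbf y|\log|\mathbf x-\mathbf y|)$, and the second derivative obeys the logarithmic bound \eqref{eq:new01:lemma42}, which is square-integrable on $\Gamma$; hence the $\mathbf y$-tangential derivatives of $g(\mathbf x,\cdot)$ and of $\bm\tau(\mathbf x)\cdot\nabla_{\mathbf x}g(\mathbf x,\cdot)$ are in $L^2_{\mathbf y}(\Gamma)$ uniformly in $\mathbf x$. Boundedness into $(L^2(\Gamma))^2$ follows, and compactness is obtained by approximating $g$ in $C(\Gamma;H^1_{\mathbf y}(\Gamma))$ by finite-rank kernels $\sum_i a_i(\mathbf x)b_i(\mathbf y)$ with $b_i\in H^1(\Gamma)$, whose associated operators converge in operator norm. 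Transferring through the isometry \eqref{eq:normH1} then yields $S_{k_1}-S_{k_2}:H^{-1}(\Gamma)\to H^1(\Gamma)$ continuous and compact.
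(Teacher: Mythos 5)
Your treatment of the second, third and fourth bullets coincides with the paper's own proof: $N_{k_1}-N_{k_2}$ is compact on $L^2(\Gamma)$ because its kernel is weakly singular (Lemma~\ref{lemma:kernels}), $K_{k_1}-K_{k_2}$ gains an order by composing with $\bm{\Lambda}$ and invoking \eqref{eq:normH1}, and the $K^\top$ statement follows by transposition. The genuine divergence is in the single-layer bullet, which you correctly identify as the crux. The paper does \emph{not} estimate the kernel there at all: it takes an auxiliary wavenumber for which $N:H^0(\Gamma)\to H^{-1}(\Gamma)$ is invertible and uses the Calder\'on identity $N_kS_k=-\tfrac14 I+(K_k^\top)^2$ from \eqref{eq:calderon} to rewrite $S_{k_1}-S_{k_2}$ as $N^{-1}$ applied to terms built from the differences $K^\top_{k_1}-K^\top_{k_2}$ and $N_{k_1}-N_{k_2}$ already handled (see \eqref{eq:Sk1-Sk2}); the two-order gain is purchased algebraically from the one-order gains plus the smoothing of $N^{-1}$. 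Your route is instead a direct second-order kernel estimate: $g=G_{k_1}-G_{k_2}$ and $\bm{\tau}(\mathbf x)\cdot\nabla_{\mathbf x}g(\mathbf x,\cdot)$ lie in $H^1_{\mathbf y}(\Gamma)$ uniformly in $\mathbf x$ thanks to the Hessian bound \eqref{eq:new01:lemma42}, so both components of $\bm{\Lambda}(S_{k_1}-S_{k_2})\varphi$ are controlled by $\|\varphi\|_{H^{-1}(\Gamma)}$ through the duality pairing. This is sound and arguably more self-contained (it needs neither the invertibility of $N$ for an imaginary wavenumber, i.e.\ Theorem~\ref{theo:inv} indirectly, nor the Calder\'on calculus), at the price of two technical points you should tighten: (i) on a Lipschitz curve $\bm{\tau}$ is defined only a.e.\ and is merely $L^\infty$, so the kernel of the derivative component is not in $C(\Gamma;H^1_{\mathbf y}(\Gamma))$; replace that space by $L^\infty(\Gamma;H^1_{\mathbf y}(\Gamma))\subset L^2(\Gamma;H^1_{\mathbf y}(\Gamma))$ and conclude compactness by a Hilbert--Schmidt-type argument (finite-rank approximation in the $L^2(\Gamma;H^1(\Gamma))$ norm), which still yields operator-norm convergence into $L^2(\Gamma)$; (ii) the interchange $\partial_s\langle g(\mathbf x,\cdot),\varphi\rangle=\langle\bm{\tau}(\mathbf x)\cdot\nabla_{\mathbf x}g(\mathbf x,\cdot),\varphi\rangle$ deserves justification, e.g.\ distributionally by testing against smooth functions and Fubini, since one must show the difference quotients of $g$ in $\mathbf x$ converge in $H^1_{\mathbf y}(\Gamma)$. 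Neither point is a fatal gap; once they are addressed your argument gives an alternative, kernel-based proof of the hardest bullet, while the paper's proof trades the kernel analysis for operator identities.
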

\begin{proof} 
It is a well known consequence of Lax Theorem (see \cite[Th 4.12]{Kress}) that  integral operators with weakly singular kernels define   compact operators in $L^2$. From this fact, and Lemma \ref{lemma:kernels}, we see that 
\[
N_{k_1}-{N_{k_2}}:H^0(\Gamma)\to H^0(\Gamma) 
\]
is compact. 

Similarly, 
\[
\bm{\Lambda}(K_{k_1}-K_{k_2}): H^0(\Gamma)\to \big[H^0(\Gamma)\big]^2 
\]
is also compact (again from Lemma \ref{lemma:kernels}). 

Consider now a  weakly  convergent sequence $(u_n)_n$. Then $\big(\bm{\Lambda}(K_{k_1}-K_{k_2}){u_n}\big)_n$ converges strongly, by the compactness of the operator, in $\big[H^0(\Gamma)\big]^2$  and, because \eqref{eq:normH1}, $\big((K_{k_1}-K_{k_2}){u_n}\big)_n$ is strongly convergent in $H^1(\Gamma)$. In other words, $K_{k_1}-K_{k_2}:H^0(\Gamma)\to H^1(\Gamma)$ is compact as well. 

A duality argument proves now the result for $K_{k_1}^\top-K_{k_2}^\top$.

Finally, assume without loss of generality, that $k_1$ is taken so that $N_{k_1}:H^{0}(\Gamma)\to H^{-1}(\Gamma)$ is  invertible. (It suffices to take as
$k_1$   a pure imaginary number). Then
\begin{eqnarray}
 S_{k_1}-S_{k_2}&=&N_{k_1}^{-1}\big[N_{k_1} S_{k_1}-N_{k_2} S_{k_2}\big]+N_{k_1}^{-1}\big(N_{k_2}-N_{k_1})S_{k_2}\nonumber\\
 &=& N_{k_1}^{-1}\big[\big(K_{k_1}^\top\big)^2- \big(K_{k_2}^\top\big)^2\big]+N_{k_1}^{-1}\big(N_{k_2}-N_{k_1})S_{k_2}\nonumber\\
 &=&N_{k_1}^{-1}\big[K^\top_{k_1}-K^\top_{k_2}\big]K_{k_2}^\top+
 N_{k_1}^{-1}K_{k_1}^\top\big[K^\top_{k_1}-K^\top_{k_2}\big]+N_{k_1}^{-1}\big(N_{k_2}-N_{k_1})S_{k_2} \label{eq:Sk1-Sk2}
\end{eqnarray}
%where we have applied in the last step the first  identity in \eqref{eq:calderon}.
where we have applied  the second identity in \eqref{eq:calderon}.
In view of \eqref{eq:Sk1-Sk2} and the mapping properties of the operators involved, we conclude that $ S_{k_2}-S_{k_1}:H^{-1}(\Gamma)\to H^1(\Gamma)$ is continuous and compact.  The proof is now finished.
\end{proof}

The last ingredient in our proof is  this result due to Escauriaza, Fabes and Verchota~\cite{EsFaVer:1992}. In this result, $K_0$, $K_0^\top$ are the double and adjoint double layer operator for Laplace equation (which obviously correspond to $k=0$).

\begin{theorem}\label{theo:inv}
For any Lipschitz {curve} and  $\lambda\not\in {(-1/2,1/2]}$, the mappings
\[
 \lambda I+K_0 :H^s(\Gamma)\to H^s(\Gamma),\quad
 \lambda I+K_0^\top :H^{-s}(\Gamma)\to H^{-s}(\Gamma)
\]
are invertible for $s\in[0,1]$. Moreover
\[
 \frac12I+K_0: H^s(\Gamma)\to H^s(\Gamma),\quad
 \frac12I+K^\top_0: H^{-s}(\Gamma)\to H^{-s}(\Gamma)
\]
are Fredholm of index 0.  
\end{theorem}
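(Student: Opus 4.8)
The plan is to peel off, by soft functional--analytic arguments, everything except a single $L^2$ spectral statement for the adjoint double layer operator, and then to obtain that statement from Rellich identities. Throughout $\lambda$ is real. First I would use the duality $\big(H^s(\Gamma)\big)'=H^{-s}(\Gamma)$: since $G_0$ is symmetric, $K_0^\top$ is exactly the $L^2(\Gamma)$-transpose of $K_0$, so $\lambda I+K_0:H^s\to H^s$ is the Banach-space adjoint of $\lambda I+K_0^\top:H^{-s}\to H^{-s}$. As an operator and its adjoint are simultaneously invertible, and Fredholm of the same index, it suffices to analyse $\lambda I+K_0^\top$ on $H^{-s}$ for $s\in[0,1]$.

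Next I would collapse the two endpoints onto each other using the Calder\'on intertwining relation $K_0S_0=S_0K_0^\top$ from \eqref{eq:calderon}, which gives $(\lambda I+K_0)S_0=S_0(\lambda I+K_0^\top)$. By the mapping property $S_0:H^{-s}\to H^{1-s}$ from the preceding theorem, together with the invertibility of the single layer on a Lipschitz curve --- which holds after a harmless dilation normalizing the logarithmic capacity so that $S_0:H^{-s}\to H^{1-s}$ is an isomorphism --- the operator $\lambda I+K_0^\top$ on $H^{-s}$ is similar to $\lambda I+K_0$ on $H^{1-s}$, which by the duality of the previous step is equivalent to $\lambda I+K_0^\top$ on $H^{-(1-s)}$. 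Hence the problem at level $s$ is equivalent to the one at level $1-s$; in particular the endpoint $s=1$ (the space $H^{-1}$) reduces to the endpoint $s=0$ (the space $L^2$).

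The heart of the matter is therefore the $L^2$ case, which I would settle by the Rellich-identity technique of Verchota and Escauriaza--Fabes--Verchota \cite{Ver:1984,EsFaVer:1992}, using the $L^2$-boundedness of $K_0^\top$ (Coifman--McIntosh--Meyer). Two observations drive it. On the one hand, the same relation $K_0S_0=S_0K_0^\top$ shows that $K_0^\top$ is symmetric for the bilinear form $\langle S_0\,\cdot\,,\cdot\rangle$; after the capacity normalization $S_0$ is positive, so this form is an inner product and the spectrum of $K_0^\top$ is real. On the other hand, applying a Rellich identity to $u=SL_0\varphi$ with a vector field $e$ transversal to $\Gamma$ ($e\cdot\mathbf{n}\ge c_0>0$), and using that $\nabla_\Gamma S_0\varphi$ is continuous across $\Gamma$ while the two normal traces are $\gamma_N^{2}SL_0\varphi=(\tfrac12 I+K_0^\top)\varphi$ and $\gamma_N^{1}SL_0\varphi=(-\tfrac12 I+K_0^\top)\varphi$ by \eqref{traces}, yields the two-sided bound $\|(\tfrac12 I+K_0^\top)\varphi\|_{L^2}\approx\|\nabla_\Gamma S_0\varphi\|_{L^2}\approx\|(-\tfrac12 I+K_0^\top)\varphi\|_{L^2}$. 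This confines the (real) spectrum to $[-\tfrac12,\tfrac12]$ and shows $\pm\tfrac12 I+K_0^\top$ have closed range; together with the identification of the kernel at $\lambda=\tfrac12$ as the one-dimensional span of the equilibrium density (and constants as cokernel), one concludes that $\lambda I+K_0^\top$ is invertible on $L^2$ for every $\lambda\notin(-\tfrac12,\tfrac12]$, while $\tfrac12 I+K_0^\top$ is Fredholm of index $0$.

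Finally I would interpolate. For $\lambda\notin(-\tfrac12,\tfrac12]$ the inverses on the endpoint spaces $H^0$ and $H^{-1}$ are restrictions of one and the same operator, so complex interpolation gives a bounded inverse on $[L^2,H^{-1}]_\theta=H^{-s}$, $s\in(0,1)$. The index-$0$ statement at $\lambda=\tfrac12$ propagates to intermediate $s$ by homotopy invariance of the index along the ray $\lambda\ge\tfrac12$ (where the operator is invertible for $\lambda>\tfrac12$), the needed semi-Fredholm a priori estimate on $H^{-s}$ being obtained by interpolating the endpoint estimates and noting that the single kernel element lies in every $H^{-s}$. The main obstacle is unquestionably the $L^2$ step: because $\Gamma$ is merely Lipschitz, $K_0^\top$ fails to be compact and no Riesz--Schauder or Neumann-series shortcut is available, so everything rests on the Rellich a priori estimates and on the careful treatment of the critical value $\lambda=\tfrac12$.
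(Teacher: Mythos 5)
Your reduction machinery is essentially the paper's own: you transpose to pass between $K_0$ on $H^s$ and $K_0^\top$ on $H^{-s}$, you conjugate by $S_0$ using $K_0S_0=S_0K_0^\top$ to move between Sobolev levels (handling the logarithmic-capacity degeneracy by a normalization, where the paper uses the rank-one correction $\tilde S_0$), you interpolate between the endpoints, and you identify the kernel/cokernel at $\lambda=\tfrac12$ with the equilibrium density and the constants to get the index-zero statement. All of that matches the published argument, and the small informalities (interpolating an invertible family, propagating Fredholmness to intermediate $s$) are shared with the paper.

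Where you genuinely depart is the $L^2$ core, and there your sketch overstates what the basic Rellich identity delivers. The two-sided bound $\|(\tfrac12 I+K_0^\top)\varphi\|_{L^2}\approx\|\nabla_\Gamma S_0\varphi\|_{L^2}\approx\|(-\tfrac12 I+K_0^\top)\varphi\|_{L^2}$ is the engine behind Verchota's treatment of $\lambda=\pm\tfrac12$, but it does not by itself "confine the spectrum to $[-\tfrac12,\tfrac12]$" on $L^2(\Gamma)$. The symmetry of $K_0^\top$ with respect to $\langle S_0\cdot,\cdot\rangle$ makes it self-adjoint for a norm equivalent to $H^{-1/2}(\Gamma)$, so it controls the $H^{-1/2}$ spectrum only; on a merely Lipschitz curve the $L^2$ spectrum could a priori be strictly larger, and ruling that out for real $|\lambda|>\tfrac12$ is exactly the content of \cite[Th 2]{EsFaVer:1992}, whose proof rests on a separate Rellich-type identity adapted to the transmission (jump) problem with parameter $\lambda$, not on a corollary of the estimates at $\pm\tfrac12$. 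The paper sidesteps this by citing Verchota and Escauriaza--Fabes--Verchota as black boxes; if you do the same (as your references suggest you could), your argument closes. If you insist on deriving the $|\lambda|>\tfrac12$ invertibility yourself, the step "two-sided Rellich bound at $\pm\tfrac12$ plus real spectrum $\Rightarrow$ resolvent set contains $|\lambda|>\tfrac12$ in $L^2$" is the one that needs an actual proof.
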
  
\begin{proof}
 %This result was proven for $s=0$ in \cite{Ver:1984} and extended to the remaining cases in \cite{EsFaVer:1992}.\\
In  \cite[Th 3.1 and 3.3]{Ver:1984} it is proven that $-\tfrac12 I+K_0:H^s(\Gamma)\to H^s(\Gamma)$ is invertible for $s=0,1$. (Note that in this paper $\frac{1}{2\pi}\log|{\bf x}|=-G_0({\bf x})$ is taken as fundamental solution.). By interpolation of Sobolev spaces and a transposition argument, we can prove the result for $\lambda=-1/2$. 
 
 In  \cite[Th 2]{EsFaVer:1992} the result is proven for $|\lambda|>1/2$ and $s=0$. To extend the result for the remaining $s$ we can adapt the argument outlined in \cite{Ver:1984}. Suppose that $S_0:H^0(\Gamma)\to H^1(\Gamma)$ is invertible. Then, $S_0(\lambda I+K_0)S_0^{-1}:H^1(\Gamma)\to H^1(\Gamma)$ is invertible as well. But Calderon identities \eqref{eq:calderon}
imply
\begin{equation}
S_0(\lambda I+K_0)S_0^{-1}=(\lambda I+K_0^\top)S_0S_0^{-1}=(\lambda I+K_0^\top)\label{eq:01:theo:inv}
\end{equation}
from where we conclude the invertibility, for   $\lambda\not\in (-1/2,1/2]$, of $\lambda I+K_0^\top:H^s(\Gamma)\to H^{s}(\Gamma)$ first  for $s=-1$ and next, by interpolation, for $s\in [-1,0]$. Again, by transposition, we can show that $\lambda I+ {K_0}:H^s(\Gamma)\to H^{s}(\Gamma)$ is invertible for $s\in[0,1]$. This argument breaks down when $S_0$ is not invertible, that is, when the logarithmic capacity of the curve $\Gamma$ is $1$. For this case we consider
\[
 \tilde{S}_0\varphi:=S_0 {\varphi}+\int_{\Gamma }\varphi.
\]
It can be shown that $\tilde{S}_0:H^s(\Gamma)\to H^{s+1}(\Gamma)$ is continuous and invertible for $s\in[-1,0]$ (see \cite[Ch. 8]{mclean:2000}). Moreover, since  
\[
 \int_\Gamma K_0\varphi =K_0^\top \bigg(\int_\Gamma \varphi\bigg)
\]
we still have
 $  K_0\tilde{S}_0=\tilde{S}_0 K^\top_0$, which makes possible to extend the 
 argument in \eqref{eq:01:theo:inv}, with $ \tilde{S}_0$ instead, to this case as well.

Finally, in \cite[Th 4.2]{Ver:1984} it is proven that $\frac{1}2I+K_0^\top:L^2_0(\Gamma)\to L^2_0(\Gamma)$ is invertible where $L^2_0(\Gamma)$ is the subspace of functions in  $L^2(\Gamma)=H^0(\Gamma)$ which zero mean. Then $\dim N(\frac{1}2I+K_0^\top)=
\mathop{\rm codim}\: R(\frac{1}2I+K_0^\top)=1$, which in particular implies that $\frac{1}2I+K_0^\top:H^0(\Gamma)\to H^0(\Gamma)$ is Fredholm of index zero and, by transposition, so is $\frac{1}2I+K_0:H^0(\Gamma)\to H^0(\Gamma)$. The same argument as before extends this result to $H^s(\Gamma)$. 
\end{proof}

Observe that in the proof of this last result we have assumed that $\Gamma$ is simply connected. Otherwise $\dim N(\frac{1}2I+K_0^\top)=
\mathop{\rm codim}\: R(\frac{1}2I+K_0^\top)$ equals to the number of simply connected components, and the argument is still valid with this very minor modification.

\subsection{Well-posednees of the boundary integral equation formulations CFIEFK~\eqref{eq:system_trans_FK}, CFIER~\eqref{eq:GFK}, and SCFIE~\eqref{eq:single}}

In this section we state and prove the well-posedness of the integral equations formulations for the transmission problems. For  Costabel-Stephan CFIEFK \eqref{eq:system_trans_FK}   and Kress-Roach CFIESK \eqref{eq:system_trans} formulations, the stability has has been already proved, but only in the space $H^{1/2}(\Gamma)\times H^{-1/2}(\Gamma)$~\cite{costabel-stephan,ToWe:1993}. We will however extend these results to a wider range of Sobolev spaces. To this end, we make use of the skew-symmetric  bilinear form 
\[
\langle (f,\varphi),(g,\psi) \rangle:= \int_\Gamma f\psi-\int_\Gamma g\varphi,\quad (f,\varphi),(g,\psi)\in H^{1/2}(\Gamma)\times H^{-1/2}(\Gamma)
\] 
which gives a non-usual representation, but more convenient for our purposes,  of the duality product
between 
$H^{1/2}(\Gamma)\times H^{-1/2}(\Gamma)$ and itself. Obviously, the integrals above have to be understood in a weak sense if $\varphi$ and $\psi$ are not sufficiently smooth.   

We point out that if ${\rm A}:H^{1/2}(\Gamma)\times H^{-1/2}(\Gamma)\to H^{1/2}(\Gamma)\times H^{-1/2}(\Gamma)$, with
${\rm A}=(A_{ij})_{i,j=1}^2$, then the transpose operator 
\[
 \langle {\rm A}^t(f,\varphi),(g,\psi) \rangle:=
 \langle (f,\varphi),{\rm A}(g,\psi) \rangle,\quad \forall (f,\varphi),(g,\psi)\in H^{1/2}(\Gamma)\times H^{-1/2}(\Gamma)
\]
is given by
\begin{equation}\label{eq:at}
 {\rm A}^t=\begin{bmatrix}
      &1\\
      -1
     \end{bmatrix}
     {\rm A}^\top
\begin{bmatrix}
      &1\\
      -1
     \end{bmatrix}
=\begin{bmatrix}
      A_{22}^\top&-A_{12}^\top \\[1.2ex]
      -A_{21}^\top&A_{11}^\top
     \end{bmatrix}. 
\end{equation}
Here, $A_{ij}^\top$ is simply the adjoint of $A_{ij}$ in the bilinear form defined by the integral product in $\Gamma$.  {Observe that the familiar identity $(AB)^t=B^t A^t$ still holds.

\begin{theorem}~\label{well-posedness-CFIEFK}
The following operators 
 \[  
  {\rm CSK}:H^{s}(\Gamma)\times H^{s-1}(\Gamma)\to
  H^{s}(\Gamma)\times H^{s-1}(\Gamma),\quad
  {\rm CFK}:H^{s}(\Gamma)\times H^{s-1}(\Gamma)\to
  H^{s}(\Gamma)\times H^{s-1}(\Gamma)
 \] 
 are invertible with continuous inverses for all $s\in[0,1]$ 
 \end{theorem}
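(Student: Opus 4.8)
The plan is to establish invertibility in three stages: boundedness, Fredholmness of index zero, and triviality of the cokernel, the last promoting index-zero Fredholmness to invertibility (the inverse is then automatically bounded by the open mapping theorem). Boundedness on $H^s(\Gamma)\times H^{s-1}(\Gamma)$ for $s\in[0,1]$ is immediate from the mapping theorem of Section~\ref{well-posedness}: the diagonal entries of ${\rm CFK}$ and ${\rm CSK}$ are combinations of $K_k$ (bounded $H^s\to H^s$) and $K_k^\top$ (bounded $H^{s-1}\to H^{s-1}$), the $(1,2)$ entries are built from $S_k$ (bounded $H^{s-1}\to H^s$), and the $(2,1)$ entries from $N_k$ (bounded $H^s\to H^{s-1}$).

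For Fredholmness I would replace every Helmholtz operator by its Laplace counterpart modulo compacts. By Theorem~\ref{theo:regularity}, together with the embeddings $H^t\hookrightarrow H^{t'}$ for $t\ge t'$, the differences $K_k-K_0$, $K_k^\top-K_0^\top$, $S_k-S_0$ and $N_k-N_0$ are compact on the relevant spaces, so that
\[
{\rm CSK}\equiv\begin{bmatrix}\tfrac{\rho^{-1}+1}{2}I+(1-\rho^{-1})K_0 & 0\\ 0 & \tfrac{\rho^{-1}+1}{2}I+(1-\rho^{-1})K_0^\top\end{bmatrix},\qquad {\rm CFK}\equiv\begin{bmatrix}-2K_0 & (\rho^{-1}+1)S_0\\ -(1+\rho)N_0 & 2K_0^\top\end{bmatrix}
\]
modulo compact operators. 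The ${\rm CSK}$ principal part is block diagonal; writing each block as $(1-\rho^{-1})(\lambda I+K_0)$ with $\lambda=\frac{\rho+1}{2(\rho-1)}$ (and as $I$ when $\rho=1$), a short computation shows $\lambda\notin(-1/2,1/2]$ for every $\rho>0$ with $\rho\ne1$, so Theorem~\ref{theo:inv} gives invertibility of both blocks and hence index-zero Fredholmness of ${\rm CSK}$. The ${\rm CFK}$ principal part is the genuine obstacle, since its leading operators are the off-diagonal $S_0$ and $N_0$ rather than a multiple of the identity: after swapping columns and forming the Schur complement relative to the $S_0$-pivot, and using the Calder\'on identities \eqref{eq:calderon} at $k=0$ (namely $K_0S_0=S_0K_0^\top$ and $N_0S_0=-\tfrac14 I+(K_0^\top)^2$), the Schur complement reduces, up to invertible factors, to $\frac{1+\rho}{4}I-\frac{(1-\rho)^2}{1+\rho}(K_0^\top)^2=-\frac{(1-\rho)^2}{1+\rho}(K_0^\top-\mu I)(K_0^\top+\mu I)$ with $\mu=\frac{1+\rho}{2|1-\rho|}>\tfrac12$ (the case $\rho=1$ being trivial); both factors are invertible by Theorem~\ref{theo:inv}, giving index-zero Fredholmness of ${\rm CFK}$.

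To pass from index-zero Fredholmness to invertibility I would exploit the skew pairing $\langle\cdot,\cdot\rangle$: with respect to it the dual of $H^s(\Gamma)\times H^{s-1}(\Gamma)$ is $H^{1-s}(\Gamma)\times H^{-s}(\Gamma)$, and formula \eqref{eq:at} yields the clean identities ${\rm CFK}^t=-{\rm CFK}$ and ${\rm CSK}^t={\rm CSK}|_{k_1\leftrightarrow k_2}$, the latter being precisely the Kress--Roach operator of the adjoint transmission problem (wavenumbers interchanged) recalled in Section~\ref{cfie}. Both transposes are injective on $H^{1/2}(\Gamma)\times H^{-1/2}(\Gamma)$: for ${\rm CFK}$, ${\rm CSK}$ this is the already established well-posedness of \eqref{eq:system_trans_FK}, \eqref{eq:system_trans}, and for the adjoint problem it follows from its uniqueness. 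Since $H^s\times H^{s-1}\hookrightarrow H^{1/2}\times H^{-1/2}$ for $s\ge1/2$, every kernel element at such $s$ lies in the $s=1/2$ kernel and so vanishes, yielding invertibility for $s\in[1/2,1]$; for $s\in[0,1/2]$ I would instead control the cokernel via $\operatorname{coker}({\rm CFK}|_{H^s\times H^{s-1}})\cong\ker({\rm CFK}^t|_{H^{1-s}\times H^{-s}})=\ker({\rm CFK}|_{H^{1-s}\times H^{-s}})$ with $1-s\in[1/2,1]$, which is trivial by the previous step, and index zero then forces the kernel to vanish as well. The same duality closes the argument for ${\rm CSK}$, now using that ${\rm CSK}^t$ is the injective adjoint-problem operator.

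The main obstacle is the Fredholm analysis of the first-kind operator ${\rm CFK}$, where the principal part is not a compact perturbation of a scalar and the Schur-complement reduction above must be carried out; a secondary technical point, which I would handle exactly as in the proof of Theorem~\ref{theo:inv}, is that $S_0$ need not be invertible when $\Gamma$ has logarithmic capacity $1$. In that case I would replace $S_0$ by the invertible regularization $\tilde S_0$, a rank-one and hence compact modification that still satisfies $K_0\tilde S_0=\tilde S_0K_0^\top$ and preserves the Calder\'on identities modulo compacts, so the entire Fredholm computation goes through unchanged.
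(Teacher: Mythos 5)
Your proof is correct, and its skeleton is the same as the paper's: decompose each operator into a Laplace principal part plus a remainder that Theorem~\ref{theo:regularity} (combined with Sobolev embeddings) shows to be compact, prove the principal part invertible via Theorem~\ref{theo:inv}, conclude Fredholmness of index zero, and then kill the kernel. You differ from the paper in the two substantive steps. For the invertibility of the first-kind principal part ${\rm CFK}_0$, the paper computes ${\rm CFK}_0^t\,{\rm CFK}_0$ with respect to the skew pairing and uses the Calder\'on identities \eqref{eq:calderon} to reduce it to the block-diagonal form with blocks $(-\beta I+K_0)(\beta I+K_0)$, $|\beta|>1/2$; your column-swap-plus-Schur-complement over the $S_0$ pivot reaches the equivalent factorization into $(K_0^\top\mp\mu I)$, $\mu>1/2$. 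The two are algebraically interchangeable, but the paper's version never needs $S_0^{-1}$, whereas yours requires the $\tilde S_0$ regularization in the capacity-one case --- which you correctly identify and handle. For injectivity, the paper bootstraps regularity of kernel elements into $H^1(\Gamma)\times H^0(\Gamma)$ (because the compact remainder maps $H^0(\Gamma)\times H^{-1}(\Gamma)$ into $H^1(\Gamma)\times H^0(\Gamma)$ and the principal part is invertible at the top level) and then applies the Green-identity uniqueness arguments of \cite{costabel-stephan,ToWe:1993} at every $s$ simultaneously; you instead reduce $s\ge 1/2$ to the known energy-space result by embedding, and $s<1/2$ by duality through the cokernel, using the transpose identities ${\rm CFK}^t=-{\rm CFK}$ and ${\rm CSK}^t={\rm CSK}|_{k_1\leftrightarrow k_2}$ (the latter requiring uniqueness of the adjoint transmission problem, available here since the wavenumbers are real). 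Both mechanisms are valid; yours leans more directly on the literature result in $H^{1/2}(\Gamma)\times H^{-1/2}(\Gamma)$ and buys a clean treatment of the low-regularity range, while the paper's makes the role of kernel regularity in the uniqueness argument explicit.
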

\begin{proof} 
%The injectivity of these operators can be shown using Green identities and the uniqueness of the transmission problem cf. \cite{costabel-stephan,ToWe:1993}.  
Assume $\rho\ne 1$. Note that from  \eqref{eq:system_trans}
\begin{eqnarray*}
 {\rm CSK}&=&\frac{\rho-1}{\rho}\underbrace{\begin{bmatrix}
\frac{\rho+1}{2(\rho-1)}I+K_0&\\& {\frac{\rho+1}{2(\rho-1)}} I+K^\top_0 
                                               \end{bmatrix}}_{\rm CSK_0}\\
                                               &&+
                                               \underbrace{\begin{bmatrix}
                                                             K_2-K_0-\rho^{-1}(K_1-K_0) & \rho^{-1}(S_1-S_2)\\
                                                            -(N_1-N_2)& K^\top_1-K^\top_0-\rho^{-1}(K^\top_2-K^\top_0 )
                                                           \end{bmatrix}
}_{
 {\rm L}_1}
\end{eqnarray*}
($K_0,K_0^\top$ denote here the double layer and adjoint double layer operator for the Laplace equation). 
Clearly,
\[
\frac{\rho+1}{2(\rho-1)}\in\mathbb{R}\setminus[-1/2,1/2]. 
\]
By Theorem \ref{theo:regularity}, 
${\rm L}_1:H^{0}(\Gamma)\times H^{-1}(\Gamma)\to H^{1}(\Gamma)\times H^{0}(\Gamma)$ is continuous and compact. On the other hand,  ${\rm CSK_0}:H^{s}(\Gamma)\times H^{s-1}(\Gamma)\to
H^{s}(\Gamma)\times H^{s-1}(\Gamma)$ is invertible
 by Theorem \ref{theo:inv}. Therefore, ${\rm CSK}$ is Fredholm  of index zero  with kernel in $H^1(\Gamma)\times H^0(\Gamma)$. For $\rho=1$ the proof is even simpler since Theorem \ref{theo:regularity} shows now that 
$ {\rm CSK}$ is a compact perturbation of the identity in $H^{s}(\Gamma)\times H^{s-1}(\Gamma)$ and the same argument can be applied to prove that the kernel is contained in $H^1(\Gamma)\times H^0(\Gamma)$ as well.

The injectivity can be shown using Green identities and the uniqueness of the transmission problem cf. \cite{costabel-stephan}. We emphasize that the regularity of the kernel of {\rm CSK} is crucial to make the arguments in \cite{costabel-stephan} valid.

For the second operator, we first note that
\[
 {\rm CFK}= \underbrace{\begin{bmatrix} 
             -2 K_0& \frac{\rho+1}{\rho} S_0\\
             -(1+\rho) N_0&2K_0^\top 
            \end{bmatrix}}_{{\rm CFK}_0}+{\rm L}_2
\]
where ${\rm L}_2:H^{0}(\Gamma)\times H^{-1}(\Gamma)\to H^{1}(\Gamma)\times H^{0}(\Gamma)$ is again compact.  Calder\'{o}n identities in \eqref{eq:calderon} yield that (see also \eqref{eq:at})
\begin{eqnarray*}
  {\rm CFK}_0^t\:  {\rm CFK}_0&=&\begin{bmatrix}
              2 K_0&- \frac{\rho+1}{\rho} S_0\\
              (1+\rho) N_0&-2K_0^\top                                    
                                  \end{bmatrix}
              \begin{bmatrix} 
             -2 K_0& \frac{\rho+1}{\rho} S_0\\
             -(1+\rho) N_0&2K_0^\top 
            \end{bmatrix}\\
            &=&
            \begin{bmatrix}
               -4 K^2_0+\frac{(\rho+1)^2}{\rho} S_0N_0&\frac{2(\rho+1)}{\rho}(K_0S_0-S_0K_0^\top)\\
                       -{2(\rho+1)}(N_0K_0-K_0^\top N_0)&-4 \big(K^\top_0\big)^2+\frac{(\rho+1)^2}{\rho} N_0S_0\\
            \end{bmatrix}\\
            &=&
            \frac{(\rho-1)^2}{\rho}
            \begin{bmatrix}
               -\frac{(\rho+1)^2}{4(\rho-1)^2}I+ K^2_0& \\
                       &   -\frac{(\rho+1)^2}{4(\rho-1)^2}I+ (K^\top_0)^2\\
            \end{bmatrix}\\
            &=&
            \frac{(\rho-1)^2}{\rho}
            \begin{bmatrix}
               (-\beta I+K_0)(\beta I +K_0)& \\
                       &   
               (-\beta I+K^\top_0)(\beta I +K^\top_0)
            \end{bmatrix},\quad \beta:=\frac{\rho+1}{2\rho-2}.  
\end{eqnarray*}
Theorem \ref{theo:inv}, observe that  {$|\beta|> 1/2$}, implies that this  mapping is invertible, and therefore, so is $ {\rm CFK}_0$.
We then conclude that ${\rm CFK}:H^{s}(\Gamma)\times H^{s-1}(\Gamma)\to H^{s}(\Gamma)\times H^{s-1}(\Gamma)$ is Fredholm   of index zero . 
 If $\rho=1$,
\[
 {\rm CFK}_0^t\:  {\rm CFK}_0= {-}\begin{bmatrix}
                               I&\\
                               &I
                              \end{bmatrix}
\]
which makes the argument still valid. The injectivity, which implies the invertibility by Fredholm alternative, follows using classical arguments  cf. \cite{ToWe:1993}.
% For both operators, the injectivity is deduced via a Green identity argument. Next it is shown that both operators are compact perturbation of invertible operators. For ${\rm CFK}$, 
% \[
%  \begin{bmatrix}
%   -2K_0& (\rho^{-1}+1)S_0\\
%   -(1+\rho)N_0& K_0^\top 
%  \end{bmatrix}
% \]
% which in 3D is coercive in $H^{1/2}(\Gamma)\times H^{-1/2}(\Gamma)$ because so are 
% 
% 
% Surjectity is deduced 
% 
% ${\rm CFK}$ the analysis 
\end{proof}

We would like to point out that the original proof for the invertibility of {\rm CFK} is based, in a nutshell, on showing that its principal part is coercive (strongly elliptic) in the space $H^{1/2}(\Gamma)\times H^{-1/2}(\Gamma)$   cf.  \cite[Section 5]{costabel-stephan}.

\begin{theorem} Assume that the wavenumber ${\kappa}$ in  \eqref{eq:GFK}  has a positive imaginary part. Then
the operators
 \[
 {\rm GFK}:H^{s}(\Gamma)\times H^{s-1}(\Gamma)\to
  H^{s}(\Gamma)\times H^{s-1}(\Gamma)
 \]
are invertible with continuous inverses in the spaces $H^{s}(\Gamma)\times H^{s-1}(\Gamma)$ for all $s\in[0,1]$. 
 \end{theorem}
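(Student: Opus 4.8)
The plan is to follow the same two-stage strategy as in Theorem~\ref{well-posedness-CFIEFK}: first establish that ${\rm GFK}$ is Fredholm of index zero on every $H^s(\Gamma)\times H^{s-1}(\Gamma)$, $s\in[0,1]$, and then prove injectivity. For the first stage I would write ${\rm GFK}={\rm GFK}_0+{\rm L}$, where ${\rm GFK}_0$ is obtained by replacing every Helmholtz operator ($S_{k_j},K_{k_j},K_{k_j}^\top,N_{k_j}$, as well as the regularizing $S_\kappa,N_\kappa$) by its Laplace counterpart $S_0,K_0,K_0^\top,N_0$. By Theorem~\ref{theo:regularity} (applied also with one of the wavenumbers equal to $\kappa$, since $\Im\kappa>0$ only improves the decomposition of Lemma~\ref{lemma:kernels}) each such difference is compact and smoothing; composing a smoothing factor with a bounded one keeps the product compact on the graded spaces, so ${\rm L}$ is compact and ${\rm GFK}$ is a compact perturbation of ${\rm GFK}_0$. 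As in the CFIEFK case, the smoothing of ${\rm L}$ also forces any element of $\ker{\rm GFK}$ to lie in $H^1(\Gamma)\times H^0(\Gamma)$, so it suffices to prove injectivity there.

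The heart of the Fredholm stage is the invertibility of ${\rm GFK}_0$. Using \eqref{eq:GFK2} and the Calder\'on identities \eqref{eq:calderon} for $k=0$ ($S_0N_0=-\tfrac14 I+K_0^2$, $K_0S_0=S_0K_0^\top$, $N_0K_0=K_0^\top N_0$) one computes
\[
{\rm GFK}_0=\begin{bmatrix} I+\tfrac{\rho-1}{\rho+1}K_0-2K_0^2 & \tfrac{4}{\rho+1}K_0S_0\\ \tfrac{4\rho}{\rho+1}K_0^\top N_0 & I+\tfrac{\rho-1}{\rho+1}K_0^\top-2(K_0^\top)^2\end{bmatrix}.
\]
I would then form the skew-transpose product ${\rm GFK}_0^{t}\,{\rm GFK}_0$ using \eqref{eq:at}; the same Calder\'on identities make the two off-diagonal blocks cancel identically, leaving a \emph{diagonal} operator whose entries are $p(K_0)$ and $p(K_0^\top)$ with $p(x)=4c^2x^4-4cx^3-3x^2+2cx+1$ and $c=\tfrac{\rho-1}{\rho+1}$. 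A direct check gives $p(\pm\tfrac12)=\tfrac14(c\pm1)^2>0$ and $p>0$ on $[-\tfrac12,\tfrac12]$, so the roots of $p$ avoid $[-\tfrac12,\tfrac12)$ and, together with Theorem~\ref{theo:inv}, the scalar factors $\beta_j I+K_0$ (resp. $\beta_j I+K_0^\top$) are invertible. Hence ${\rm GFK}_0^{t}\,{\rm GFK}_0$, and therefore ${\rm GFK}_0$, is invertible, and ${\rm GFK}$ is Fredholm of index zero on $H^s(\Gamma)\times H^{s-1}(\Gamma)$ for all $s\in[0,1]$.

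For injectivity, let ${\rm GFK}\,v=0$ with $v=(\gamma_D u^t,\gamma_N^1 u^t)$. Writing ${\rm GFK}$ in the form \eqref{eq:GFK1} and introducing the exterior and interior Calder\'on defects $e_1:=(I-C_1)v$ and $e_2:=(I-C_2)\,{\rm diag}(1,\rho^{-1})v$, a trace computation identifies ${\rm CFK}\,v=e_1-{\rm diag}(1,\rho)\,e_2$ and ${\rm CSK}\,v=\rho^{-1}{\rm diag}(1,\rho)\,e_1+e_2$. Since $e_1\in\ker C_1$ and $e_2\in\ker C_2$, they are respectively the Cauchy data of an interior Helmholtz field $w_1$ in $D_2$ with wavenumber $k_1$ and of a radiating exterior field $w_2$ in $D_1$ with wavenumber $k_2$ (the configuration of the adjoint transmission problem). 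Substituting the two identities into \eqref{eq:GFK1} turns ${\rm GFK}\,v=0$ into the coupled homogeneous system $\gamma_D^2 w_1+\rho\,\gamma_D^1 w_2+2S_\kappa(\gamma_N^2 w_1-\rho\,\gamma_N^1 w_2)=0$ and $\gamma_N^2 w_1+\gamma_N^1 w_2-2N_\kappa(\gamma_D^2 w_1-\gamma_D^1 w_2)=0$ for the pair $(w_1,w_2)$.

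The main obstacle is showing that this regularized system forces $w_1\equiv0$ and $w_2\equiv0$; this is where $\Im\kappa>0$ is indispensable. I would pair the two conditions with the Cauchy data in the skew-symmetric form introduced before Theorem~\ref{well-posedness-CFIEFK}, apply Green's identities to $w_1$ (interior) and $w_2$ (exterior, invoking the Sommerfeld condition and Rellich's lemma so that $\Im\int_\Gamma \gamma_N^1 w_2\,\overline{\gamma_D^1 w_2}\ge0$), and exploit that for $\Im\kappa>0$ the operator $S_\kappa$ has positive-definite imaginary part while $N_\kappa$ has negative-definite imaginary part. Balancing these signs in the resulting energy identity — exactly the Burton–Miller mechanism underlying \eqref{eq:single} — forces the boundary densities, hence $w_1$ and $w_2$, to vanish. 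Then $e_1=e_2=0$, so ${\rm CFK}\,v=e_1-{\rm diag}(1,\rho)e_2=0$, and the injectivity of ${\rm CFK}$ already established in Theorem~\ref{well-posedness-CFIEFK} gives $v=0$. Combined with the index-zero property, this yields invertibility with continuous inverse on every $H^s(\Gamma)\times H^{s-1}(\Gamma)$, $s\in[0,1]$.
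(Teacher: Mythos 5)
Your Fredholm stage coincides with the paper's: the same decomposition ${\rm GFK}={\rm GFK}_0+{\rm L}$ into a Laplace principal part plus a compact, smoothing remainder, and your quartic $p(x)=4c^2x^4-4cx^3-3x^2+2cx+1$ with $c=\tfrac{\rho-1}{\rho+1}$ is exactly the expanded form of the paper's factored $A_1=\tfrac{4(\rho-1)^2}{(\rho+1)^2}\bigl(\tfrac{\rho+1}{2(\rho-1)}I+K_0\bigr)\bigl(K_0^3-\tfrac{3(\rho+1)}{2(\rho-1)}K_0^2+\tfrac{\rho+1}{2(\rho-1)}I\bigr)$; your explicit matrix for ${\rm GFK}_0$ and the cancellation of the off-diagonal blocks of ${\rm GFK}_0^t\,{\rm GFK}_0$ both check out. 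One point you assert rather than prove is ``$p>0$ on $[-1/2,1/2]$'', which is precisely what Theorem~\ref{theo:inv} needs; it can be disposed of in one line via the factorization $p(x)=(1+2cx)\bigl(1-(3-2cx)x^2\bigr)$, both factors being positive for $|x|\le 1/2$, $|c|<1$ --- this would replace the paper's Lemma~\ref{lemma:polynomial}. Where you genuinely diverge is the injectivity stage. The paper passes to the transpose ${\rm GFK}^t$ via \eqref{eq:at}, identifies it with the GCSIE operator of \cite{turc2,turc3}, and imports the injectivity argument from there. You instead stay on the primal side: you split a kernel element $v$ into the Calder\'on defects $e_1=(I-C_1)v$ and $e_2=(I-C_2)\,{\rm diag}(1,\rho^{-1})v$ (your identities ${\rm CFK}\,v=e_1-{\rm diag}(1,\rho)e_2$ and ${\rm CSK}\,v={\rm diag}(\rho^{-1},1)e_1+e_2$ are correct), reduce \eqref{eq:GFK1} to a regularized homogeneous system for the adjoint transmission configuration $(w_1,w_2)$, and close with the injectivity of ${\rm CFK}$ from Theorem~\ref{well-posedness-CFIEFK}. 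This buys a self-contained argument that never mentions the adjoint operator; the cost is that the decisive step --- the energy identity combining Green's formula for $w_1$, Rellich's lemma for $w_2$, and the sign properties $\Im\langle S_\kappa\varphi,\overline{\varphi}\rangle>0$ and $\Im\langle N_\kappa\psi,\overline{\psi}\rangle<0$ for $\Im\kappa>0$ --- is only sketched, and those sign properties are themselves the content one must import from \cite{turc2,turc3}. Since the paper likewise defers exactly this step to \cite{turc3}, your proposal is at a comparable level of rigor, just organized differently; to make it fully complete you would need to write out the energy identity and record that the kernel element lies in $H^1(\Gamma)\times H^0(\Gamma)$ (which your compactness argument does provide) so that the Green identities are justified on a Lipschitz curve.
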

\begin{proof}
We first prove that ${\rm GFK}$ is a compact perturbation of an invertible operator.  Starting from 
\eqref{eq:GFK2}, and proceeding as above, we  derive  
\[ 
{\rm GFK}=
\underbrace{\begin{bmatrix}
 \frac{1}2I+K_0&-\rho^{-1}S_0\\
\rho N_0& \frac{1}2I-K_0^\top&
\end{bmatrix}}_{L_0}
 +\underbrace{\frac{1}{\rho+1}\begin{bmatrix}
      I&2S_{0}\\
      -2\rho N_0&\rho I
      \end{bmatrix}}_{\rm R_0}  {\rm CFK}_0   +{\rm L}_3=:{\rm GFK}_0+{\rm L}_3.
      \]
with  ${\rm L}_3:H^{0}(\Gamma)\times H^{-1}(\Gamma)\to H^{1}(\Gamma)\times H^{0}(\Gamma)$ compact. Straightforward calculations  show that
\begin{eqnarray*}
 L_0^t L_0&=&0\\
 {\rm CFK}_0^t\:{\rm R}_0^t \:{\rm R_0} \: {\rm CFK}_0&=&\\[1.5ex]
 && \hspace{-4.5cm}  \frac{4(\rho-1)^2}{(\rho+1)^2}
 \begin{bmatrix}
 K_0^2\big(-\frac{\rho+1}{2(\rho-1)} I+K_0\big) 
 \big(\frac{\rho+1}{2(\rho-1)} I+K_0\big)\\         
 &
 \big(K_0^\top\big)^2
 \big(-\frac{\rho+1}{2(\rho-1)} I+K^\top_0\big) 
 \big(\frac{\rho+1}{2(\rho-1)} I+K^\top_0\big)\\    
                                                  \end{bmatrix}\\[1.5ex]
L_0^t\:  {\rm R}_0 \:{\rm CFK}_0+ {\rm CFK}_0^t\: {\rm R}_0^t \:{\rm L}_0  &=&\\
&& \hspace{-4.5cm}   -\frac{2({\rho-1})}{\rho+1}\begin{bmatrix}
     \left(-I+2 K_0^2\right) 
     (\frac{\rho +1}{2(\rho -1)} I+K_0)  &   \\ & \left(-I+2(K_0^\top)^2\right) \left(\frac{\rho +1}{2  (\rho -1)}I+(K_0^\top)\right).
   \end{bmatrix}
                  \end{eqnarray*}
Therefore, 
\begin{equation}\label{eq:GFK0}
  {\rm GFK}_0^t\:{\rm GFK}_0=\begin{bmatrix} A_1&\\
 &A_1^\top \end{bmatrix}
\end{equation}
where
\[
 A_1:= {\frac{4(\rho-1)^2}{(\rho+1)^2}}\Big(K_0+\frac{\rho+1}{2(\rho-1)} I\Big)\Big(K_0^3-\frac{3 (\rho +1)}{ 2(\rho-1)}K_0^2+\frac{\rho +1}{2(\rho-1)} I\Big)
\]
Note that
\[
 A_1= {\frac{4(\rho-1)^2}{(\rho+1)^2}}\Big(\frac{\rho+1}{2(\rho-1)} I+K_0\Big) \big(-\lambda_1(\rho)I+K_0\big)\big(-\lambda_2(\rho)I+K_0\big)\big(-\lambda_3(\rho)I+K_0\big)
\]
where $\lambda_j(\rho)$ are the roots of 
\[
p_\rho(x)=x^3-\frac{3 (\rho +1)}{{2(\rho-1)}}x^2+\frac{\rho +1}{{2(\rho-1)} }
\]
which can be  shown to be all real, and lying outside of $[-1/2,1/2]$ for any $\rho\in (0,\infty)\setminus\{1\}$ cf. Lemma \ref{lemma:polynomial} below. Theorem \ref{theo:inv} implies the invertibility of $A_1$ and therefore of $ {\rm GFK}_0$. 

For $\rho=1$  the proof clearly breaks down, but it can be studied separately and deduce that \eqref{eq:GFK0} still holds with
\[
 A_1=1-3K_0^2=-3\Big(\frac{1}{\sqrt{3}}I+K_0\Big)\Big(-\frac{1}{\sqrt{3}}I+K_0 \Big)
\]
which again is invertible. 

In short, we have shown that $ {\rm GFK}:H^s(\Gamma)\times H^{s-1}(\Gamma)\to H^s(\Gamma)\times H^{s-1}(\Gamma)$, for $s\in[0,1]$, is Fredholm of index zero.  To prove it is invertible we will consider the adjoint operator given by (see \eqref{eq:GFK2})
\[
 {\rm GFK}^t= 
\begin{bmatrix}
 \frac{1}2I-K_2&\rho^{-1}S_2\\[1.2ex]
-\rho N_2& \frac{1}2I+K_2^\top&
\end{bmatrix}+\begin{bmatrix}
                                (K_1+K_2)   &-({\rho^{-1}}S_2 + S_1)\\[1.2ex]
                                  {N_1+{\rho}N_2}& -(K_1^\top +K^\top_2)
                                 \end{bmatrix}{\frac{1}{1+\rho}\begin{bmatrix}
{\rho} I&-2 S_\kappa\\[1.2ex]
{2\rho}  N_\kappa &  I
      \end{bmatrix}}. 
\] 
We note that the operator ${\rm GFK}^t$ is exactly the operator corresponding to the GCSIE formulations~\cite{turc2,turc3}. Clearly 
\[
  {\rm GFK}^t= {\rm GFK}_0^t +{\rm L}_3^t
\]
where ${\rm GFK}_0^t:H^s(\Gamma)\times H^{s-1}(\Gamma)\to H^s(\Gamma)\times  H^{s-1}(\Gamma)$
is invertible for $s\in[0,1]$ and ${\rm L}_3^t:H^{0}(\Gamma)\times H^{-1}(\Gamma)\to H^{1}(\Gamma)\times H^{0}(\Gamma)$ is compact. Then, 
%the null space satisfies  ${\rm N}({\rm GFK})\in H^1(\Gamma)\times H^0(\Gamma)$
the null space of this operator is contained in $H^1(\Gamma)\times H^0(\Gamma)$. We can then follow the arguments in \cite{turc3} and conclude   ${\rm GFK}^t$ is injective (the arguments are still valid for Lipschitz domains once we ensure that the elements of the null space are smooth enough), and therefore, from Fredholm alternative, invertible from where one derives the invertibility of our operator ${\rm GFK}$. 
\end{proof}

\begin{lemma}\label{lemma:polynomial}
 For any positive $\rho\ne 1$, the roots of the polynomial
 \[
  x^3-\frac{3 (\rho +1)}{ 2(\rho-1)}x^2+\frac{\rho +1}{2(\rho-1) }
 \]
 are all real and lie outside the interval $[-1/2,1/2]$. 
\end{lemma}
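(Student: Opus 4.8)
The plan is to first reparametrize the problem by setting $a := \frac{\rho+1}{2(\rho-1)}$, so that the polynomial becomes $p(x) = x^3 - 3a x^2 + a$, and to observe that the hypothesis $\rho\in(0,\infty)\setminus\{1\}$ is precisely equivalent to $|a| > 1/2$. Indeed, for $\rho>1$ one has $a>1/2$ (with $a\downarrow 1/2$ as $\rho\to\infty$ and $a\to+\infty$ as $\rho\to1^+$), while for $0<\rho<1$ one has $a<-1/2$ (with $a\uparrow -1/2$ as $\rho\to0^+$ and $a\to-\infty$ as $\rho\to1^-$). Thus it suffices to prove that whenever $|a|>1/2$ the cubic $p(x)=x^3-3ax^2+a$ has three real roots, all lying strictly outside $[-1/2,1/2]$.

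Next I would locate the critical points and the relevant boundary values. Differentiating gives $p'(x)=3x(x-2a)$, so the critical points are $x=0$ and $x=2a$, with $p(0)=a$ and $p(2a)=a(1-4a^2)=a(1-2a)(1+2a)$. A direct evaluation at the endpoints of the forbidden interval yields the two quantities that drive the whole argument, namely $p(\tfrac12)=\frac{2a+1}{8}$ and $p(-\tfrac12)=\frac{2a-1}{8}$. Because $|a|>1/2$ strictly, both are nonzero, so in particular no root can coincide with $\pm 1/2$.

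I would then split into the two cases $a>1/2$ and $a<-1/2$; the analysis is symmetric under $a\mapsto -a$ combined with $x\mapsto -x$, so only one case must be worked out in detail. Take $a>1/2$: here $x=0$ is a local maximum with $p(0)=a>0$ and $x=2a>1$ is a local minimum with $p(2a)<0$, which already forces three distinct real roots. Monotonicity of $p$ on each of the three intervals $(-\infty,0)$, $(0,2a)$, $(2a,\infty)$, on which $p'$ has constant sign, then pins down each root: since $p$ is increasing on $(-\infty,0)$ with $p(-\tfrac12)=\frac{2a-1}{8}>0$, the leftmost root lies in $(-\infty,-1/2)$; since $p$ is decreasing on $(0,2a)$ with $p(\tfrac12)=\frac{2a+1}{8}>0$, the middle root lies in $(1/2,2a)$; and the rightmost root lies in $(2a,\infty)\subset(1/2,\infty)$ because $2a>1$. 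The case $a<-1/2$ is handled identically after swapping the roles of the local maximum and minimum.

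The computations here are all elementary, so the real content is in the two organizing observations: the reparametrization that exposes $|a|>1/2$ as the governing condition, and the sign-of-$p$-at-endpoints bookkeeping. The only place that requires genuine care is ensuring the matching between the sign of $p$ at $\pm 1/2$ and the correct monotonicity interval actually isolates each root on the intended side; this is where a careless case split could go wrong, and so I would organize the argument around the fixed triple of sign-definite intervals determined by the critical points, rather than attempting a single-shot discriminant computation.
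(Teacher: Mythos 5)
Your proposal is correct and follows essentially the same route as the paper: the same reparametrization to $x^3-3ax^2+a$ with $|a|>1/2$, the same sign evaluations at $\pm\tfrac12$ and at $2a$, and the same intermediate-value localization of one root in each of $(-\infty,-\tfrac12)$, $(\tfrac12,2a)$, and $(2a,\infty)$. Your explicit use of the critical points $0$ and $2a$ to organize the monotonicity is a slightly more careful packaging of the paper's root-counting argument, but not a different proof.
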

\begin{proof}
We can reduce the problem to study the polynomial
\[
 q_{\beta}(x)=x^3- {3\beta}  x^2+\beta
\]
for $\beta= {\frac{\rho+1}{2(\rho-1)}}\in\mathbb{R}\setminus[-1/2,1/2]$. 

Assume $\beta>1/2$. Then $q_\beta(\pm 1/2)=\pm \tfrac18+\tfrac{\beta}4$. That is,
$q_\beta(\pm 1/2)>0$. Also, $q_\beta(2\beta)=4\beta\left(\frac{1}{4}-\beta^2\right)<0$, and thus $q_\beta$ has a zero between $1/2$ and $2\beta$. Furthermore, since $\lim_{x\to\mp\infty}q_\beta(x)=\mp\infty$, it follows that $q_\beta$ must have  one (real) zero  in $(-\infty,-1/2)$, and another zero in $(2\beta,\infty)$. Hence, we have shown that the three roots of the polynomial are real and lie outside the interval $[-1/2,1/2]$.

%it follows that $q_\beta$ must have one (real) zeros in $(-\infty,-1/2)$, and two (real) zeros in $(1/2,\infty)$. 

For $\beta<-1/2$, one can show, similarly, that $q_\beta$, has three real roots, two of them  smaller than $-1/2$, and the other one larger than  $1/2$. 
\end{proof}

We end this section studying the SCFIE formulation \eqref{eq:single} which again turns out to be stable, although in weaker norms.
\begin{theorem}\label{well_posedness_SIE}
The operator ${\rm SIE}:H^s(\Gamma)\to H^s(\Gamma)$  associated to the formulation {\rm SCFIE}~\eqref{eq:single} is invertible for any $s\in[-1,0]$
\end{theorem}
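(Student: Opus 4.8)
The plan is to mirror the two-step strategy used for the previous formulations: first exhibit $\mathrm{SIE}$ as a compact perturbation of a Laplace principal part that is Fredholm of index zero on $H^s(\Gamma)$ for $s\in[-1,0]$, and then prove injectivity by a Burton--Miller/uniqueness argument, so that the Fredholm alternative delivers invertibility. For the Fredholm step I would first discard the smoothing contributions in \eqref{eq:single}. The operator $i\eta\mathbf{S}$ maps $H^{s}(\Gamma)$ into $H^{s+1}(\Gamma)$ for $s\in[-1,0]$ (each summand ends in $S_1$ or $S_2$), hence is compact on $H^s(\Gamma)$; and $2(N_1-N_2)S_2$ is compact because $N_1-N_2$ is smoothing by Theorem~\ref{theo:regularity} and $S_2$ is bounded. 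Replacing the remaining operators $K_1^\top,K_2^\top$ in $\mathbf{K}$ by the Laplace operator $K_0^\top$ is legitimate modulo compact terms, since $K_j^\top-K_0^\top$ is compact by Theorem~\ref{theo:regularity} and products with bounded factors remain compact; this yields $\mathrm{SIE}=\mathrm{SIE}_0+\mathcal{C}$ with $\mathcal{C}$ compact and
\[
\mathrm{SIE}_0=-\tfrac{1+\rho}{2}I-2\rho\,K_0^\top+2(1-\rho)(K_0^\top)^2 .
\]

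Viewing $\mathrm{SIE}_0$ as the polynomial $p(K_0^\top)$, I would factor $p(t)=2(1-\rho)(t-t_1)\big(t+\tfrac12\big)$ with $t_1=\frac{\rho+1}{2(1-\rho)}$. A short sign check shows $t_1>\tfrac12$ when $0<\rho<1$ and $t_1<-\tfrac12$ when $\rho>1$, so in either case $-t_1\notin(-1/2,1/2]$ and the factor $-t_1 I+K_0^\top$ is invertible by Theorem~\ref{theo:inv}, while the factor $\tfrac12 I+K_0^\top$ is Fredholm of index zero by the same theorem. Since the two factors commute, $\mathrm{SIE}_0$ is the composition of an invertible operator with a Fredholm index-zero operator on $H^{s}(\Gamma)$ for $s\in[-1,0]$ (precisely the range $H^{-s}$, $s\in[0,1]$, covered by Theorem~\ref{theo:inv}); hence $\mathrm{SIE}_0$, and therefore $\mathrm{SIE}$, is Fredholm of index zero. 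The exceptional value $\rho=1$ reduces to $\mathrm{SIE}_0=-(I+2K_0^\top)$ and is handled directly.

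The delicate step, and the main obstacle, is injectivity. Given $\mathrm{SIE}\,\mu=0$ I would set $u^2:=-2SL_2\mu$ and form the full-space potential $V:=-SL_1(\rho\,\gamma_N^2 u^2)+DL_1(\gamma_D^2 u^2)$, so that $u^1:=V|_{D_1}$ is a radiating Helmholtz-$k_1$ field with the Cauchy data dictated by \eqref{eq:Green2}. Using the jump relations of the layer potentials, the interior restriction $w:=V|_{D_2}$ solves $\Delta w+k_1^2 w=0$ in $D_2$, and its interior Cauchy data equals \emph{exactly} the transmission defect $\big(\gamma_D^1 u^1-\gamma_D^2 u^2,\ \gamma_N^1 u^1-\rho\,\gamma_N^2 u^2\big)$. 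The homogeneous equation $\mathrm{SIE}\,\mu=0$ is precisely the Burton--Miller combination of this defect, so $w$ obeys the impedance condition $\gamma_N^2 w-i\eta\,\gamma_D^2 w=0$; Green's first identity in $D_2$, upon taking imaginary parts and using $\eta\in\mathbb{R}\setminus\{0\}$ together with $k_1\in\mathbb{R}$, forces $\gamma_D^2 w=\gamma_N^2 w=0$, whence $w\equiv0$ by unique continuation. Therefore the defect vanishes, $(u^1,u^2)$ solve the homogeneous transmission problem \eqref{eq:Ac_i}--\eqref{eq:bc}, and uniqueness of the transmission problem yields $u^1\equiv0$ and $u^2\equiv0$. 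Finally $\gamma_D^2 u^2=-2S_2\mu=0$ means the single-layer potential $SL_2\mu$ has vanishing Dirichlet trace on both sides of $\Gamma$, so its radiating exterior part vanishes by exterior Dirichlet uniqueness; thus $SL_2\mu\equiv0$ in $\mathbb{R}^2\setminus\Gamma$, and the jump of its normal derivative gives $\mu=0$. The Fredholm alternative then yields invertibility of $\mathrm{SIE}$ on $H^s(\Gamma)$ for all $s\in[-1,0]$.
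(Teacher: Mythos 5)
Your proof is correct and follows essentially the same route as the paper's: the identical Laplace principal part $-\tfrac{1+\rho}{2}I-2\rho K_0^\top+2(1-\rho)\big(K_0^\top\big)^2$, the same factorization into $\tfrac12 I+K_0^\top$ (Fredholm of index zero) composed with an invertible factor $-t_1 I+K_0^\top$ with $|t_1|>\tfrac12$, followed by the Fredholm alternative on $H^s(\Gamma)$, $s\in[-1,0]$. The only differences are that you write out the Kleinman--Martin injectivity argument (impedance condition for the transmission defect, Green's identity, uniqueness, and the normal-derivative jump of $SL_2\mu$) where the paper simply cites \cite{KleinmanMartin}, and that your second root $t_1=\frac{\rho+1}{2(1-\rho)}$ carries the correct sign, whereas the paper's displayed factor $-\frac{\rho+1}{2(\rho-1)}I+K_0^\top$ has an immaterial sign slip (only $|t_1|>\tfrac12$ is used).
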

\begin{proof}
It can be easily seen that if $\rho \neq 1$ we have 
\begin{eqnarray*}
 {\rm SIE}&=& 2(1-\rho) \Big(-\frac{1+\rho}{4(1-\rho)}I-\frac{\rho}{1-\rho}  K_0^\top+\Big(K_0^\top\Big)^2\Big)+{\rm L}_4\\
 &=& 2(1-\rho)\Big(\tfrac12 I+K_0^\top\Big)\left(-\tfrac{\rho+1}{2(\rho-1)}I+K_0^\top\right)+{\rm L}_4
\end{eqnarray*}
where ${\rm L}_4:H^{-1}\to H^0$ can be checked to be  compact.  We have that the operator $\frac{1}{2}I+K_0^\top$   is  Fredholm of index zero (cf. Theorem \ref{theo:inv}) while the operator $-\frac{\rho+1}{2(\rho-1)}I+K_0^\top$ is invertible,  and thus the main part of the ${\rm SIE}$ operator is Fredholm of index zero as well
which is enough for our purposes. 
Fredholm alternative implies now the invertibility of ${\rm SIE}$. (We refer to the seminal paper
\cite{KleinmanMartin} for a proof of the injectivity). The case $\rho=1$ can be treated similarly.
\end{proof}

\section{High-order Nystr\"om methods for the discretization of the formulations   CFIEFK~\eqref{eq:system_trans_FK}, CFIESK~\eqref{eq:system_trans}, CFIER~\eqref{eq:GFK}, and SCFIE~\eqref{eq:single} \label{singular_int}}

\parskip 10pt plus2pt minus1pt
\parindent0pt

We present in this section Nystr\"om discretizations of the formulations CFIEFK~\eqref{eq:system_trans_FK}, CFIESK~\eqref{eq:system_trans}, CFIER~\eqref{eq:GFK2}, and SCFIE~\eqref{eq:single} presented in the previous section.  The discretizations of some of these formulations,  for smooth curves, have been already analysed in \cite{Boubendir23032015}. 
The key component for polygonal domains  is to use sigmoidal-graded meshes that accumulate points polynomially at corners and to reformulate the aforementioned systems of integral equations in terms of {\em more\ regular\ densities} and weighted versions of the boundary integral operators of Helmholtz equations.

\subsection{Weighted boundary integral operators for Helmholtz equation}

We assume that the domain $D$ 
has corners at $\mathbf{x}_1,\mathbf{x}_2,\ldots,\mathbf{x}_P$ whose apertures measured inside $D$ are respectively $ {\theta_1,\theta_2,\ldots,\theta_P}$, and that $\Gamma\setminus\{\mathbf{x}_1,\mathbf{x}_2,\ldots,\mathbf{x}_P\}$ is piecewise analytic. 
Let  $(x_1(t),x_2(t))$ be a  $2\pi$ periodic parametrization of $\Gamma$  so that each of the curved segments  $[\mathbf{x}_j,\mathbf{x}_{j+1}]$ is mapped by $(x_1(t),x_2(t))$ with $t\in[T_j,T_{j+1}]$. We assume that $x_1(t),x_2(t)$ are continuous and that on each interval $[T_j,T_{j+1}]$  are smooth with
$(x_1'(t))^2+(x_2'(t))^2>0$ (the one-sided derivatives are taken for $t=T_j,T_{j+1}$). Consider  
the sigmoid transform introduced by Kress in~\cite{KressCorner} 
\begin{eqnarray}\label{eq:cov_w}
w(s)&=&\frac{T_{j+1}[v(s)]^p+T_j[1-v(s)]^p}{[v(s)]^p+[1-v(s)]^p},\ T_j\leq s\leq T_{j+1},\ 1\leq j\leq P\\
v(s)&=&\left(\frac{1}{p}-\frac{1}{2}\right)\left(\frac{T_{j}+T_{j+1}-2s}{T_{j+1}-T_j}\right)^3+\frac{1}{p}\ \frac{2s-T_j-T_{j+1}}{T_{j+1}-T_j}+\frac{1}{2}\nonumber
\end{eqnarray} 
where $p\geq 2$.  The function $w$ is a smooth, increasing, bijection on each of the intervals $[T_j,T_{j+1}]$ for $1\leq j\leq P$, with $w^{(k)}(T_j)=w^{(k)}(T_{j+1})=0$
for $1\leq k\leq p-1$. We then define the new parametrization
\[
 x(t)=(x_1(w(t)),x_2(w(t)))
\]
extended by $2\pi-$periodicity, if needed, to any $t\in\mathbb{R}$.

% We assume that the boundary curve has a $2\pi$ periodic parametrization so that each of the curved segments $[\mathbf{x}_j,\mathbf{x}_{j+1}]$ is parametrized by $\mathbf{x}(t)=(x_1(w(t)), x_2(w(t)))$ with $t\in[T_j,T_{j+1}]$ (so that $\mathbf{x}_j=\mathbf{x}(T_j)$) where $0=T_1<T_2<\ldots <T_P<T_{P+1}=2\pi$ and $w:[T_j,T_{j+1}]\to[T_j,T_{j+1}],\ 1\leq j\leq P$ is the sigmoid transform introduced by Kress
% \begin{eqnarray}\label{eq:cov_w}
% w(s)&=&\frac{T_{j+1}[v(s)]^p+T_j[1-v(s)]^p}{[v(s)]^p+[1-v(s)]^p},\ T_j\leq s\leq T_{j+1},\ 1\leq j\leq P\\
% v(s)&=&\left(\frac{1}{p}-\frac{1}{2}\right)\left(\frac{T_{j}+T_{j+1}-2s}{T_{j+1}-T_j}\right)^3+\frac{1}{p}\ \frac{2s-T_j-T_{j+1}}{T_{j+1}-T_j}+\frac{1}{2}\nonumber
% \end{eqnarray} 
% where $p\geq 2$.  The function $w$ is a smooth, increasing, bijection on each of the intervals $[T_j,T_{j+1}]$ for $1\leq j\leq P$, with $w^{(k)}(T_j)=w^{(k)}(T_{j+1})=0$
% for $1\leq k\leq p-1$ and all $1\leq j \leq P$. We also assume that $x_j:\mathbb{R}\to\mathbb{R}$ are $2\pi$ 
% periodic with $(x_1'(t))^2+(x_2'(t))^2>0$ for all $t$. We will extend $\mathbf{x}$ to $\mathbb{R}$ by $2\pi$ periodicity.

A central issue that collocation discretizations of the integral equations CFIEFK~\eqref{eq:system_trans_FK}, CFIESK~\eqref{eq:system_trans}, and SCFIE~\eqref{eq:single} is confronted with is the possibly unbounded nature of their solutions at corners. In what follows we deal with this issue by introducing new {\em weighted} unknown densities defined as
\begin{equation}\label{eq:unknowns1} 
 {\gamma_N^{1,w} u^t}(t):=(\gamma_N{u^t})(\mathbf{x}(t))\ |\mathbf{x}'(t)|
\end{equation}
for equations CFIEFK~\eqref{eq:system_trans_FK} and CFIESK~\eqref{eq:system_trans}, as well as a new {\em weighted} unknown density defined as
\begin{equation}\label{eq:weightedmu}
\mu^w(t):=\mu(\mathbf{x}(t))|\mathbf{x}'(t)|
\end{equation}
for the equation SCFIE~\eqref{eq:single}.

According to the classical theory of singularities of solutions of elliptic problems in non-smooth domains, the solutions of the integral equations CFIEFK~\eqref{eq:system_trans_FK}, CFIESK~\eqref{eq:system_trans}, CFIER~\eqref{eq:GFK}, and SCFIE~\eqref{eq:single} exhibit corner singularities~\cite{costabel-stephan}. In the case of smooth incident fields $u^{inc}$ (e.g. plane wave incidence) we have that $({\gamma_D u^{inc}}, {\gamma_N u^{inc}})\in H^1(\Gamma)\times L^2(\Gamma)$. Given that by Theorem~\ref{well-posedness-CFIEFK} the operators ${\rm CSK}$ (and/or ${\rm CFK}$) are invertible in the space $H^1(\Gamma)\times L^2(\Gamma)$ we obtain that $(\gamma_D{u^t}, \gamma_N{u^t})\in H^1(\Gamma)\times L^2(\Gamma)$. Consequently, it follows from Sobolev embedding theorems that $\gamma_D{u^t}$ is H\"older continuous on $\Gamma$. Also, given that by Theorem~\ref{well_posedness_SIE} the operators ${\rm SIE}$ are invertible in $L^2(\Gamma)$, we obtain that the solution $\mu$ of the SCFIE equation~\eqref{eq:single} belongs to $L^2(\Gamma)$. In conclusion, in the case of smooth incident fields $u^{inc}$, the $L^2$ integrable corner singularities of $\gamma_N{u^t}$ and $\mu$ are mollified by the corner polynomially vanishing weights $|\mathbf{x}'|$ for large enough values of the exponent $p$ of the sigmoid transform. Indeed, for large enough values of $p$ the $2\pi$ periodic functions $ {\gamma_N^{1,w} u^t}$ and $\mu^w$ vanish at $T_j$ for all $1\leq j\leq P$ and are regular enough.

\begin{remark}
More precise statements can be made about the nature of the functions $ \gamma_N^{1,w} u^t $ and $\mu^w$ if we resort to the corner asymptotic behavior of $ \gamma_N^{1,w} u^t $. Indeed, it was shown in~\cite{costabel-stephan} that under the assumptions of smooth incident fields and $\rho\neq 1$ the Neumann trace $\gamma_N^{1,w} u^t$ behaves as $\gamma_N^{1,w} u^t \sim c_j r_j^{\lambda_j},\ r_j\to 0,\ -1<\lambda_j<0,\ c_j\in \mathbb{C}$ where $r_j$ denotes the radial distance to the corner $\mathbf{x}_j$ and $\lambda_j$ is a solution of the transcendental equation
\begin{equation}\label{eq:sing_exp}
\frac{\sin(\lambda_j\pi-(1+\lambda_j) {\theta_j})}{\sin(\lambda_j\pi)}=\mp\frac{\rho+1}{\rho-1}
\end{equation}
where ${\theta_j}$ is the aperture of the interior angle at the corner $\mathbf{x}_j$. In the case of smooth incident fields and $\rho=1$, it can be shown that $\gamma_N^{1,w} u$ is actually more regular. Similar arguments allow us to obtain identical exponents in the corner asymptotic behavior of the solution $ \mu^{w} $ of the SIE equations~\eqref{eq:single}. 
\end{remark}

In what follows we introduce the graded-parameterized version of the four boundary integral operators of the Helmholtz equation. In the light of the discussion above on the regularity of $\gamma_D u$, $ \gamma_N^{1,w} u^t $, and $\mu^w$, we consider the cases when these boundary integral operators act on two types of $2\pi$ periodic densities: (1) we assume that $\varphi\in C^\alpha[0,2\pi]$ where $\alpha$ is large enough and in addition $\varphi(t)$ behaves like $|t-T_j|^r,\ r>0$ for all $1\leq j\leq P+1$; and (2) we assume that $\psi\in C^{0,\beta}[0,2\pi], 0<\beta<1$ is a H\"older continuous density. We start by defining the parametrized single layer operator in the form
\begin{equation}\label{eq:sl_w}
(S_k\varphi)(t):=
\int_0^{2\pi} G_k(\mathbf x(t) -\mathbf x(\tau))\varphi(\tau)d\tau.
\end{equation}
We define next the parametrized double layer operator in the form
\begin{equation}\label{eq:k_w}
(K_k\psi)(t):=\int_0^{2\pi}\frac{\partial G_k(\mathbf x(t)-\mathbf x(\tau))}{\partial\mathbf{n}(\mathbf x(\tau))}|{\bf x}'(\tau)| \psi(\tau)d\tau.
\end{equation}
and the parametrized weighted adjoint of the double layer operator as
\begin{equation}\label{eq:kt_w}
(K_k^{\top,w}\varphi)(t):=\int_0^{2\pi}|{\bf x}'(t)|\frac{\partial G_k(\mathbf x(t)-\mathbf x(\tau))}{\partial\mathbf{n}(\mathbf x(t))} \varphi(\tau)d\tau  
\end{equation}
Finally, we defined the parametrized weighted hypersingular operator as
% \begin{equation}\label{eq:n_w}
% (N_k^w\psi)(t):={\rm FP}\int_0^{2\pi}|\mathbf{x}'(t)|\ |\mathbf{x}'(\tau)|\frac{\partial^2 G_k(\mathbf{x}(t)-\mathbf{x}(\tau))}{\partial\mathbf{n}(\mathbf{x}(t))\partial\mathbf{n}(\mathbf{x}(\tau))}\ \psi(\tau)d\tau.
% \end{equation}
\begin{equation}\label{eq:n_w}
\begin{split}
(N_k^w\psi)(t):=&
k^{2}\int_0^{2\pi} G_k(\mathbf x(t) -\mathbf x(\tau))
|\mathbf{x}'(t)|\ |\mathbf{x}'(\tau)| (\mathbf{n}(\mathbf x(t))\cdot\mathbf{n}(\mathbf x(\tau)))\psi(\tau)d\tau\\
&+ {\rm PV}
\int_\Gamma |\mathbf{x}'(t)|(\partial_s G_k)(\mathbf x(t) -\mathbf x(\tau)) \psi'(\tau)d\tau.
\end{split}
\end{equation} 
Having defined parametrized weighted versions of the boundary integral operators associated with the Helmholtz equation, we present next parametrized weighted versions of the integral equations  CFIEFK~\eqref{eq:system_trans_FK}, CFIESK~\eqref{eq:system_trans}, CFIER~\eqref{eq:GFK}, and SCFIE~\eqref{eq:single}.

In the case of the {\em physical} formulations CFIEFK~\eqref{eq:system_trans_FK} and CFIESK~\eqref{eq:system_trans}, we introduce the additional notation (see~\eqref{eq:unknowns1})
\begin{equation}\label{eq:D_trace} 
 \gamma_D u(t):=(\gamma_Du)(\mathbf{x}(t)),\quad \gamma_D u^{inc}(t):=u^{inc}(\mathbf{x}(t)),\quad  {\gamma_N^{w} u^{inc}}(t):=(\gamma_N u^{inc}) (\mathbf{x}(t)) |\mathbf{x}'(t)|,
 %\ 0\leq t\leq 2\pi.
\end{equation}
We multiply both sides of the second equations in formulations CFIEFK~\eqref{eq:system_trans_FK} and CFIESK~\eqref{eq:system_trans} by the term $|\mathbf{x}'(t)|$ and we obtain
\begin{equation}\label{eq:system_trans_FK_w}
{\rm CFK}^w 
\begin{bmatrix}
 \gamma_D u^t\\\gamma^{1,w}_N u^t
      \end{bmatrix}x
      :=
\begin{bmatrix}
  -(K_1+K_2) & (\rho^{-1} S_2 + S_1) \\
  -(N_1^w +\rho N_2^w)&(K^{\top,w}_1+K^{\top,w}_2)
  \end{bmatrix}  
\begin{bmatrix}
 \gamma_D u^t\\\gamma^{1,w}_N u^t
      \end{bmatrix} 
= \begin{bmatrix}  
   \gamma_D  u^{inc}\\
  \gamma^w_N u^{inc}
    \end{bmatrix}
\end{equation}
and respectively
\begin{equation}
\label{eq:system_trans_w}
\begin{split}
{\rm CSK}^w 
\begin{bmatrix}
 \gamma_D u^t\\\gamma^{1,w}_N u^t
      \end{bmatrix} 
&:=
      \Bigg(\frac{\rho^{-1}+1}{2}\begin{bmatrix}
                                 I&\\
                                 &I
                                \end{bmatrix}
                                +\begin{bmatrix}
                                   (K_2-\rho^{-1}K_1)&\rho^{-1} (S_1 - S_2)\\
                                   -(N_1^w - N_2^w)&(K_1^{\top,w}-\rho^{-1} K^{\top,w}_2)
                                 \end{bmatrix}
\Bigg) 
\begin{bmatrix}
 \gamma_D u^t\\\gamma^{1,w}_N u^t
      \end{bmatrix}
       \\
      &= \begin{bmatrix}  
   \rho^{-1}\gamma_D  u^{inc}\\
  \gamma^w_N u^{inc}
    \end{bmatrix} 
    \end{split}
\end{equation}
The weighted version of the CFIER formulations~\eqref{eq:GFK} can be written as
\begin{eqnarray}
\label{eq:GFKw}{\rm GFK}^w} 
{
\begin{bmatrix} 
 \gamma_D u^t\\ \gamma^{1,w}_N u^t 
      \end{bmatrix} &:=&\bigg(\frac{\rho}{\rho+1}  {\rm CSK}^w +\frac{2}{1+\rho}\begin{bmatrix}
      &S_{\kappa}\nonumber\\
      -\rho N_{\kappa}^w
      \end{bmatrix} {\rm CFK}^w \bigg)\begin{bmatrix}
 \gamma_D u^t\\ \gamma^{1,w}_N u^t 
      \end{bmatrix}\\
&=&\frac{1}{\rho+1}\begin{bmatrix}
      I&2S_{\kappa}\\
      -2\rho N_\kappa^w&\rho I
      \end{bmatrix} 
      \begin{bmatrix} 
   \gamma_D  u^{inc}\\
  \gamma^w_N u^{inc}
    \end{bmatrix} ,
 %     \begin{bmatrix}
 %  \gamma_D  u^{inc}\\
 % \gamma^{w}_N u^{inc}
 %     \end{bmatrix},
 \quad 0\leq t<2\pi.
\end{eqnarray}
In addition, with efficiency considerations in mind, we also consider an alternative version of the regularized formulations in which the operators $S_\kappa$ and $N_\kappa^w$ are replaced by appropriate Fourier multipliers which are the principal symbols of the former operators in the sense of pseudodifferential operators~\cite{turc2}. Specifically, we use the following Fourier multipliers
\begin{equation}\label{eq:defPS1w}
(PS^w_{N,\kappa}\phi)(\mathbf{x}(t))=|\mathbf{x}'(t)|\sum_{n\in\mathbb{Z}}\sigma_{N,\kappa}(n)\hat{\phi}(n)e^{int},\quad \sigma_{N,\kappa}(\xi)=-\frac{1}{2}\sqrt{|\xi|^2-\kappa^2}
\end{equation}
and
\begin{equation}\label{eq:defPS2w}
(PS_{S,\kappa}\psi)(\mathbf{x}(t))=\sum_{n\in\mathbb{Z}}\sigma_{S,\kappa}(n)\hat{\psi}(n)e^{int},\quad \sigma_{S,\kappa}(\xi)=\frac{1}{2\sqrt{|\xi|^2-\kappa^2}}
\end{equation}
acting on $2\pi$-periodic densities $\phi$ and $\psi$, where $\hat{\phi}(n)$ and $\hat{\psi}(n)$ are the Fourier coefficients of the functions $\phi$ and $\psi$ respectively. With the aid of these Fourier multipliers we define the weighted Principal Symbol CFIER formulation (CFIERPS) 
\begin{eqnarray}
\label{eq:GFKwPS}
 {\rm PSGFK}^w   \begin{bmatrix} 
 \gamma_D u^t\\
\  \gamma^{1,w}_N u^t 
      \end{bmatrix}  &:=&\bigg(\frac{\rho}{\rho+1} {{\rm CSK}^w}+\frac{2}{1+\rho}\begin{bmatrix}
      &PS_{S,\kappa}\nonumber\\
      -\rho PS_{N,\kappa}^w
      \end{bmatrix} \begin{bmatrix} 
 \gamma_D u^t\\
\  \gamma^{1,w}_N u^t 
      \end{bmatrix} \\
&=&\frac{1}{\rho+1}\begin{bmatrix}
      I&2PS_{S,\kappa}\\
      -2\rho PS_{N,\kappa}^w&\rho I
      \end{bmatrix} 
      \begin{bmatrix}
   \gamma_D  u^{inc}\\
  \gamma^{w}_N u^{inc}
      \end{bmatrix} , \quad 0\leq t<2\pi.
\end{eqnarray}
We note that we did not establish the well-posedness of the CFIERPS formulations. Nevertheless, we give plenty numerical evidence in Section~\ref{numres} that the CFIERPS formulations are robust and computationally advantageous.

Finally, in order to derive weighted parametrized versions of the SCFIE equation, we consider the same layer representation of the fields $u^2$ and $u^1$, but a weighted  Burton-Miller type combination of the form 
%$\left(|\mathbf{x}'|\frac{\partial u^1}{\partial n}-i\eta u^1\right)-\left(\rho |\mathbf{x}'|\frac{\partial u^2}{\partial n}-i\eta u^2\right)=-|\mathbf{x}'|\frac{\partial u^{inc}}{\partial n}+i\eta u^{inc}$
 $\big(\gamma_N^{1,w} u^1 -i\eta \gamma_D^{1} u^1\big)-\big(\rho \gamma_N^{2,w} u^2-i\eta \gamma_D^{2} u^2\big)=-
\gamma_N^{w} u^{inc} +i\eta \gamma_D u^{inc}$ ($\gamma_N^{j,w} u^j$ for $j=1,2$ are defined as in \eqref{eq:unknowns1}) to lead to the following weighted boundary integral equation  
\begin{equation}\label{eq:single_w}
-\frac{1+\rho}{2}\mu^w+\mathbf{K}^w\mu^w-i\eta\mathbf{S}^w\mu^w=\gamma_N^w u^{inc}-i\eta \gamma_D u^{inc},\quad \eta\in\mathbb{R}\,\quad \eta\neq 0,\quad {\rm on}\ [0,2\pi]
\end{equation}
where
$$\mathbf{K}^w=-K_2^{\top,w}(\rho I-2K_2^{\top,w})-\rho K_1^{\top,w}(I+2K_2^{\top,w})+2(N_1^{w}-N_2^{w})S_2$$
and
$$\mathbf{S}^{w}=-\rho S_1(I+2K_2^{\top,w})-(I-2K_1)S_2.$$

\begin{remark}
It is possible to define more general weighted unknowns if we replace the weight $|\mathbf{x}'|$ by $|\mathbf{x}'|^ {\delta},\ \delta\geq 1/2$, the new weights leading to new weighted integral equation formulations. Furthermore, it is possible to define weighted unknowns that involve the Dirichlet data, that is ${\gamma}_D^wu(t):=u(\mathbf{x}(t)) |\mathbf{x}'(t)|^{\delta}$ leading again to new weighted integral equations. We note that the latter type of weighted unknowns (with $ {\delta}=1/2$) and weighted boundary integral equations of CFIESK type were used in~\cite{greengard1,greengard2}.
\end{remark}

\subsection{Nystr\"om discretizations based on kernel splitting and trigonometric interpolation}

We use a Nystr\"om discretization of the weighted parametrized equations~\eqref{eq:system_trans_FK_w},~\eqref{eq:system_trans_w},~\eqref{eq:GFKw},~\eqref{eq:GFKwPS}, and~\eqref{eq:single_w} that relies on (a) splitting of the kernels of the weighted parametrized operators into smooth and singular components, (b) trigonometric interpolation of the unknowns of these integral equations, and (c) analytical expressions for the integrals of products of periodic singular and weakly singular kernels and Fourier harmonics. Several details of this method were originally introduced in~\cite{LuW}; for completeness, we give out the full details in what follows. 

We present first a strategy to split the kernels of the weighted parametrized integral operators featured in equations~\eqref{eq:system_trans_FK_w},~\eqref{eq:system_trans_w},~\eqref{eq:GFKw}, and~\eqref{eq:single_w} into smooth and singular components.  The latter, in turn, can be expressed themselves as products of {\em known} singular kernels and smooth kernels.  We assume that $k>0$ and we begin by looking at the operator 
\begin{equation}\label{eq:sl:b}
(S_k\varphi)(t)=\int_0^{2\pi} M_k(t,\tau)\varphi(\tau)d\tau:=
\int_0^{2\pi} G_k(\mathbf x(t) -\mathbf x(\tau))\varphi(\tau)d\tau,
\end{equation} 
where $\varphi$ it is a sufficiently smooth $2\pi-$periodic function---recall that basically $\varphi(t) := |\mathbf{x}'(t)| \gamma_N^1u(\mathbf{x}(t))$. The kernel $M_k(t,\tau)$ can be expressed in the form
\[
M_k(t,\tau)= M_{k,1}(t,\tau)\ln\left(4\sin^2\frac{t-\tau}{2}\right)+M_{k,2}(t,\tau)
\]
with
\begin{eqnarray*}
M_{k,1}(t,\tau)&:=&-\frac{1}{4\pi}J_0(k|\mathbf{r}|)\\
M_{k,2}(t,\tau)&:=&M_k(t,\tau)-M_{k,1}(t,\tau)\ln\left(4\sin^2\frac{t-\tau}{2}\right)
\end{eqnarray*} 
where we have denoted, for lightening this and forthcoming expressions, 
\[
 {\bf r}={\bf r}(t,\tau)= {\bf x}(t)-{\bf x}(\tau) .
\]
Observe that the diagonal terms are given by
%that have diagonal terms
\[
M_{k,1}(t,t)=-\frac{1}{4\pi},\qquad M_{k,2}(t,t)=\frac{i}{4}-\frac{C}{2\pi}-\frac{1}{2\pi}\ln\frac{k|\mathbf{x}'(t)|}{2}
\]
where $C  \approx 0.5772$ is the Euler-Mascheroni constant.

The parametrized double layer operator, see \eqref{eq:double}, is defined as follows 
\begin{equation}\label{eq:k:b}
(K_k\psi)(t)=\int_0^{2\pi} H_k(t,\tau)\psi(\tau)d\tau:=\int_0^{2\pi}\frac{\partial  G_k( \mathbf x(t)-\mathbf x(\tau) ) }{\partial\mathbf{n}(\mathbf x(\tau))}|{\bf x}'(\tau)| \psi(\tau)d\tau.
\end{equation}
We note that the kernel of the operator $K_k$ behaves as (i) $|t-\tau|^{-1}$ when $t\to T_j, t<T_j$ and $\tau\to T_j,\ \tau>T_j$ for $2\leq j\leq P$ and as (ii) $(|t-\tau|\mod 2\pi)^{-1}$ when $t\to T_1=0$ and $\tau\to T_{P+1}=2\pi$ (that is when $\mathbf{x}(t)$ and $\mathbf{x}(\tau)$ approach a corner from different sides). Thus, the integral in the definition of the operator $K_k$ should be understood in the sense of Cauchy Principal Value integral.  However, for H\"older continuous densities $\psi$, it is possible to recast the operators $K_k$ into an equivalent form that features operators that involve integrable expressions only~\cite{KressCorner}. In order to do so, we 
%use $G_0(\mathbf{z})=-\frac{1}{2\pi}\ln{|\mathbf{z}|}$ and 
 express $K_k$ in the form
\begin{eqnarray}\nonumber
(K_k\psi)(t)&=&\int_0^{2\pi}\frac{\partial}{\partial\mathbf{n}(\mathbf x(\tau))} \big[ G_k( \mathbf x(t)-\mathbf x(\tau) ) - G_0( \mathbf x(t)-\mathbf x(\tau) ) \big]|{\bf x}'(\tau)| \psi(\tau)d\tau\\
&+&\int_0^{2\pi}\frac{ \partial G_0( \mathbf x(t)-\mathbf x(\tau) )}{\partial\mathbf{n}(\mathbf x(\tau))}|{\bf x}'(\tau)| (\psi(\tau)-\psi(t))d\tau\nonumber\\
&+&\psi(t)\int_0^{2\pi}\frac{\partial  G_0( \mathbf x(t)-\mathbf x(\tau) ) }{\partial\mathbf{n}(\mathbf x(\tau))}|{\bf x}'(\tau)|d\tau.\label{eq:Kkparam}
\end{eqnarray}
% We note that the integrands of the first two integral operators in the right hand side of the previous equation are weakly singular (they have a logarithmic singularity when $t=\tau$); for the second integral this is because $\psi$ is assumed to be H\"older continuous. 

We note first that
\[
\int_0^{2\pi}\frac{ \partial G_0( \mathbf x(t)-\mathbf x(\tau) )}{\partial\mathbf{n}(\mathbf x(\tau))}|{\bf x}'(\tau)|d\tau=\begin{cases}\displaystyle-\frac{1}{2}&{\rm if}\ t\in [0,2\pi]\setminus \{T_1,\ldots,T_P\}\\[1.4ex]
\displaystyle-\frac{1}{2\pi} \theta_j &{\rm if}\ t=T_j, 1\leq j\leq P,\end{cases}
\] 
where $\theta_j$ is the inner angle of the $j$th corner. 
For the second integral, and since we have assumed $\psi$ to be H\"older continuous, the integrand is weakly singular. Finally, in the kernel  of the  first integral operator  in \eqref{eq:Kkparam} we find the function 
\[
 H_k(t,\tau)=\left[
\frac{\partial G_k}{\partial\mathbf{n}(\mathbf x(\tau))}\right] ({\bf r})\:|{\bf x}'(\tau)|= 
 \frac{ik}{4} \bm{\nu}(\tau)\cdot\mathbf{r}\ \frac{H_1^{(1)}(k|\mathbf{r}|)}{|\mathbf{r}|},
\]
where 
\[
{ \bm{\nu}(\tau)}:={\bf n}({\bf x}(\tau))|{\bf x}'(\tau)|=w'(\tau)(x_2'(w(\tau)),-x_1'(w(\tau))).\ 
%\tau\neq T_j, j=1,\ldots,P. 
\]
We have  in addition  the decomposition
\[
H_k(t,\tau)= H_{k,1}(t,\tau)\ln\left(4\sin^2\frac{t-\tau}{2}\right)+H_{k,2}(t,\tau)
\]
where
\begin{eqnarray*}
H_{k,1}(t,\tau)&:=&-\frac{k}{4\pi} \bm{\nu}(\tau)\cdot\mathbf{r}\ \frac{J_1(k|\mathbf{r}|)}{|\mathbf{r}|}\\
H_{k,2}(t,\tau)&:=&H_k(t,\tau)-H_{k,1}(t,\tau)\ln\left(4\sin^2\frac{t-\tau}{2}\right)
\end{eqnarray*}
which have the diagonal terms
\[
H_{k,1}(t,t)=0,\qquad H_{k,2}(t,t)=\frac{1}{4\pi}\frac{ \bm{\nu}(t)\cdot \mathbf{x}''(t)}{|\mathbf{x}'(t)|^2}.
\]
It can be easily seen that the second function  in the kernel of the first integral operator in \eqref{eq:Kkparam} is given by
\[
H_0(t,\tau)=\left[\frac{\partial G_0}{\partial\mathbf{n}(\mathbf x(\tau))}\right] ({\bf r})\:|{\bf x}'(\tau)| =\frac{1}{2\pi}\frac{ \bm{\nu}(\tau)\cdot\mathbf{r}}{|\mathbf{r}|^2},\qquad H_0(t,t)=\frac{1}{4\pi}\frac{ \bm{\nu}(t)\cdot \mathbf{x}''(t)}{|\mathbf{x}'(t)|^2},
\]
and thus $H_{k,2}(t,t)-H_0(t,t)$, appearing in the first term in \eqref{eq:Kkparam}, 
can be defined even at corner points where $|\mathbf{x}'(t)|=0$. 

The graded-parametrized adjoint of the double layer cf. \eqref{eq:adj_double} is given by 
\begin{equation}\label{eq:kt:b}
(K_k^{\top,w}\varphi)(t)=\int_0^{2\pi} H^\top_k(t,\tau)\varphi(\tau)d\tau:=\int_0^{2\pi}|{\bf x}'(t)|\frac{\partial  G_k( \mathbf x(t)-\mathbf x(\tau) ) }{\partial\mathbf{n}(\mathbf x(t))} \varphi(\tau)d\tau. 
\end{equation}
Here
\[
H^\top_k(t,\tau) =\frac{ik}{4} \bm{\nu}(t)\cdot\mathbf{r}\ \frac{H_1^{(1)}(k|\mathbf{r}|)}{|\mathbf{r}|}.
\]
The kernel $H_k^\top(t,\tau)$ can be expressed in the form
\[
H_k^\top(t,\tau)= H_{k,1}^\top(t,\tau)\ln\left(4\sin^2\frac{t-\tau}{2}\right)+H_{k,2}^\top(t,\tau)
\]
with
\begin{eqnarray*}
H_{k,1}^\top(t,\tau)&:=&-\frac{k}{4\pi} \bm{\nu}(t)\cdot\mathbf{r}\ \frac{J_1(k|\mathbf{r}|)}{|\mathbf{r}|}\\
H_{k,2}^\top(t,\tau)&:=&H_k^\top(t,\tau)-H_{k,1}^\top(t,\tau)\ln\left(4\sin^2\frac{t-\tau}{2}\right)
\end{eqnarray*}
and
\[
H_{k,1}^\top(t,t)=0,\qquad H_{k,2}^\top(t,t)=\frac{1}{4\pi}\frac{ \bm{\nu}(t)\cdot \mathbf{x}''(t)}{|\mathbf{x}'(t)|^2}.
\]
A simple calculation shows that $H_{k,2}^\top(t,t)$ is infinite whenever $|\mathbf{x}'(t)|=0$, that is $w'(t)=0$.
\begin{remark}\label{vanishing}
Notice that although $H_{k,2}^\top$ is unbounded in and around corners, the product $H_{k,2}^\top(t,t)\varphi(t)$ still vanishes at corners. Indeed, in a neighborhood of a corner, say $t\sim T_j$ the function $\varphi$ behaves as $\varphi(t)\sim c_j [w'(t)]^{1+\lambda_j},\ -1/2<\lambda_j$. A careful inspection of the singularity of $H_{k,2}^\top(t,t)$ reveals that this expression behaves as $w''(t)/w'(t)$ and thus the product $H_{k,2}^\top(t,t)\varphi(t)\sim c_j w''(t)[w'(t)]^{\lambda_j}$. Given that $w(t)\sim |t-T_j|^p,\ t\to T_j$, we see that the latter product is regular enough for $t\to T_j$ provided that $p$ is  sufficiently large.
\end{remark}

Finally, for the graded-parametrized version of the hypersingular operator $N_k$, we add and subtract $\frac{1}{4\pi}\ln(4\sin^2((t-\tau)/2)$ to get
\begin{eqnarray*}
(N_k^{w} \psi)(t)&=&-\mathrm{PV}\frac{1}{4\pi}\int_0^{2\pi} \cot\frac{t-\tau}{2}\:\psi'(\tau)\,{\rm d}\tau
+\int_0^{2\pi}Q_k(t,\tau)\psi(\tau)\,{\rm d}\tau+\int_0^{2\pi}D_k(t,\tau)\psi'(\tau)\,{\rm d}\tau
\end{eqnarray*}
with
\begin{eqnarray*}
Q_k(t,\tau)\!\!&:=&\!\!\!k^{2} M_k(t,\tau)(\mathbf x'(t)\cdot\ \mathbf x'(\tau))\\
D_k(t,\tau)\!\!&:=&\!\!\!\frac{\partial}{\partial t}\left(\frac{1}{4\pi}\ln\left(\sin^2\frac{t-\tau}{2}\right)+M_k(t,\tau)\right).
\end{eqnarray*} 
Note  we have used 
\[                                              
|\mathbf x'(t)||\mathbf x'(\tau)|(\mathbf{n}(\mathbf x(t))\cdot\mathbf{n}(\mathbf x(\tau))= \mathbf x'(t) \cdot  \mathbf x'(\tau) .
  \]
The kernel $Q_k$ can be treated similarly to the kernel $M_k$. On the other hand, a simple calculation gives that
\[
D_k(t,\tau)= D_{k,1}(t,\tau)\ln\left(4\sin^2\frac{t-\tau}{2}\right)+D_{k,2}(t,\tau)
\]
where
\begin{eqnarray*}
D_{k,1}(t,\tau)&:=&\frac{k}{4\pi}\mathbf{x}'(t)\cdot\mathbf{r}\ \frac{J_1(k|\mathbf{r}|)}{|\mathbf{r}|}\\
D_{k,2}(t,\tau)&:=&D_k(t,\tau)-D_{k,1}(t,\tau)\ln\left(4\sin^2\frac{t-\tau}{2}\right)
\end{eqnarray*}
have diagonal terms
\[
D_{k,1}(t,t)=0,\qquad D_{k,2}(t,t)=\frac{1}{4\pi}\frac{\mathbf{x}'(t)\cdot \mathbf{x}''(t)}{|\mathbf{x}'(t)|^2}.
\]
Again, $D_{k,2}(t,t)$ is infinite at corners, but the trapezoidal rule can still be applied since that term is multiplied by $\psi'(t)$ which vanishes at the corners---this requires the same type of justification used in Remark~\ref{vanishing}. 

We note that the weighted integral equations CFIESK~\eqref{eq:system_trans_w} and SCFIE~\eqref{eq:single_w} feature the difference operator $N_1^w-N_2^w$. While this difference can be performed directly using the methodology presented above for the evaluation of the operators $N_1^w$ and $N_2^w$, a more advantageous approach relies on the methods developed by Kress in~\cite{KressH} for the evaluation of operators $N_k^w-N_0^w$ where $N_0^w$ is the weighted hypersingular operator corresponding to wavenumber $k=0$. The latter methodology consists of expressing the graded-parametrized operators,    constructed  from \eqref{eq:normal_double:0} instead, 
as
\begin{equation}\label{eq:N_k}
([N_k^w-N_0^w] \psi )(y) =-\int_0^{2\pi}( \bm{\nu}(t))^\top 
  \nabla^2(G_k-G_0)  ( {\bf x}(t)-{\bf x}(\tau) )
(t,\tau) \bm{\nu}(\tau)\psi(\tau)d\tau
\end{equation}
% \begin{equation}\label{eq:N_k}
% ([N_k^w-N_0^w]\phi)(x(t)) =-\int_0^{2\pi}( \bm{\nu}(t))^\top {L}_k(t,\tau) \bm{\nu}(\tau)\psi(\tau)d\tau
% \end{equation}
%is the Hessian of $\frac{i}{4}H_0^1(k|\mathbf{x}(t)-\mathbf{x}(\tau)|)+\frac{1}{2\pi}\log{|\mathbf{x}(t)-\mathbf{x}(\tau)|}$.  Denoting ${\bf r}={\bf x}(t)-{\bf x}(\tau)$
% we have (see the proof of Lemma \ref{lemma:kernels}) that
% \begin{eqnarray}
%  {L}_k(t,\tau)&=&-\frac{ik^2}{4}H_0^1(k|{\bf x}|)\frac{1}{|{\bf r}|^2}
% {\bf r} \: {\bf r} ^{\top}+\left(\frac{i}{4} H_1^{(1)}(k|{\bf r}|)k|{\bf r}|-\frac1{2\pi}\right)
%  \left(\frac{2}{|{\bf r}|^4}{\bf r}\:  {\bf r} ^{\top} - \frac{1}{|{\bf r}|^2}I\right)\\
%  &=&L_{1,k,0}(t,\tau)
% \ln\left(\sin^2\frac{t-\tau}2\right)+L_{2,k,0}(t,\tau)
% \end{eqnarray}
We have (see the proof of Lemma \ref{lemma:kernels}) that
\begin{eqnarray*}
\nabla^2(G_k-G_0)( {\bf r} )&=&-\frac{ik^2}{4}H_0^{(1)}(k|{\bf r}|)\frac{1}{|{\bf r}|^2}
{\bf r} \: {\bf r} ^{\top}+\left(\frac{i}{4} H_1^{(1)}(k|{\bf r}|)k|{\bf r}|-\frac1{2\pi}\right)
 \left(\frac{2}{|{\bf r}|^4}{\bf r}\:  {\bf r} ^{\top} - \frac{1}{|{\bf r}|^2}I\right)\\
 &=&L_{1,k,0}(t,\tau)
\ln\left(\sin^2\frac{t-\tau}2\right)+L_{2,k,0}(t,\tau)
\end{eqnarray*}
where  $I$ is the $2\times 2$ identity matrix  and 
\begin{eqnarray*}
L_{1,k,0}(t,\tau)&=&\frac{k}{4\pi}\left[ \frac{J_1(k|\mathbf{r}|)}{|\mathbf{r}|} I +\frac{1}{|\mathbf{r}|^2}\left(kJ_0(k|\mathbf{r}|)-2\frac{J_1(k|\mathbf{r}|)}{|\mathbf{r}|}\right)\mathbf{r} \:\mathbf{r}^\top\right]\\
 {L}_{2,k,0} (t,\tau)&:=&  {\nabla^2(G_k-G_0)( {\bf r} )}-L_{1,k,0}(t,\tau)
\ln\left(\sin^2\frac{t-\tau}2\right)
\end{eqnarray*}
satisfies
\[
 L_{1,k,0}(t,t)=\frac{k^2}{8\pi} I,\quad
  L_{2,k,0}(t,t)=k^2\left[\frac{1}{4\pi}\ln\left(\frac{k|\mathbf{x}'(t)|}{2}\right)-\frac{i}{8}+\frac{2C-1}{8\pi}\right] I +\frac{k^2}{4\pi}\frac{1}{|\mathbf{x}'(t)|^2}\:
   \mathbf{x}'(t) \  (\mathbf{x}'(t))^\top 
\]
It follows then that 
\[
( \bm{\nu}(t))^\top( {\nabla^2(G_k-G_0)( {\bf r} )}) \bm{\nu}(\tau) =L_{1,k}(t,\tau)\ln\left(4\sin^2\frac{t-\tau}{2}\right)+L_{2,k}(t,\tau)
\]
with diagonal terms
\[
L_{1,k}(t,t)=\frac{k^2}{8\pi}|\mathbf{x}'(t)|^2 \qquad L_{2,k}(t,t)=k^2\left[\frac{1}{4\pi}\ln\left(\frac{k|\mathbf{x}'(t)|}{2}\right)-\frac{i}{8}+\frac{2C-1}{8\pi}\right]|\mathbf{x}'(t)|^2
\]
that are bounded even around corner points where $\mathbf{x}'(t)=0$. Thus, we can apply the procedure above for the graded-parametrized operator $N_{k_1}^w-N_{k_2}^w=(N_{k_1}^w-N_0^w)-(N_{k_2}^w-N_0^w)$  so that we  are led to integral operators whose kernels are of the form
$$L_{k_1,k_2}(t,\tau):=[L_{1,k_1}(t,\tau)- {L_{1,k_2}(t,\tau)}]\ln\left(4\sin^2\frac{t-\tau}{2}\right)+[ {L_{2,k_1}(t,\tau)-L_{2,k_2}(t,\tau)}].$$

The splitting techniques presented above can be adapted for the evaluation of the operators $ {S_\kappa}$ and $N_\kappa^w$ with $\Im(\kappa)>0$ using additional smooth cutoff function supported in neighborhoods of the target points $t$ according to the procedures introduced in~\cite{turc2}.

\subsection{Trigonometric interpolation}

We describe next a Nystr\"om method based on trigonometric interpolation that 
follows closely the quadrature method introduced by Kress in~\cite{KressH}, 
which in turn relies on the logarithmic quadrature methods introduced by 
Kussmaul~\cite{kusmaul} and Martensen~\cite{martensen}. The main idea is to use global trigonometric interpolation of the quantities  $\gamma_D u^t$, $\gamma_N^{1,w} u^t$ , and $\mu^w$ that are the solutions of the integral equations~\eqref{eq:system_trans_FK_w},~\eqref{eq:system_trans_w}, and~\eqref{eq:single_w}. Given that the larger the exponent $p$ of the sigmoidal transform is, the smoother the quantities $ {\gamma_N^{1,w} u^t}$ and $\mu^w$ are, the trigonometric interpolants of these quantities converge fast with respect to the number of  interpolation points. We choose an equi-spaced splitting of the interval $[0,2\pi]$ into $2n$ points.  We choose $T_j$ such that $T_{j+1}-T_j$ are proportional (with the same constant of proportionality) to the lengths of the arcs of $\Gamma$ from $\mathbf{x}_j$ to $\mathbf{x}_{j+1}$ for all $j$. Consequently, the number of discretization points per subinterval $[T_j,T_{j+1}],\ 1\leq j\leq P$ may differ from each other. We thus consider the equi-spaced collocation points $\{t_0^{(n)},t_1^{(n)},\ldots,t_{2n-1}^{(n)}\}$ and  the interpolation problem with respect to these nodal points
in the space $\mathbb{T}_n$ of trigonometric 
polynomials of the form
$$v(t)=\sum_{m=0}^n a_m\cos{mt}+\sum_{m=1}^{n-1}b_m\sin{mt}$$
 which  is uniquely solvable~\cite{Kress}. We denote by $P_n:C[0,2\pi]\to 
\mathbb{T}_n$ the corresponding trigonometric polynomial interpolation operator . We use the quadrature rules~\cite{KressH}
\begin{eqnarray}\label{eq:quad1}
\int_0^{2\pi}\ln\left(4\sin^2\frac{t-\tau}{2}\right)f(\tau)d\tau&\approx&\int_0^
{2\pi}\ln\left(4\sin^2\frac{t-\tau}{2}\right)(P_nf)(\tau)d\tau\nonumber\\
&=&\sum_{i=0}^{2n-1}R_i^{(n)}(t)f(t_i^{(n)})
\end{eqnarray}
where the expressions $ R_i^{(n)}(t) $ are given by
$$R_i^{(n)}(t)=-\frac{2\pi}{n}\sum_{m=1}^{n-1}\frac{1}{m}\cos{m(t-t_i^{(n)})}
-\frac{\pi}{n^2}\cos{n(t-t_i^{(n)})}.$$
We also use the trapezoidal rule
\begin{equation}\label{eq:quad2}
\int_0^{2\pi}f(\tau)d\tau\approx\int_0^{2\pi}(P_nf)(\tau)d\tau=\frac{\pi}{n}
\sum_{i=0}^{2n-1}f(t_i^{(n)}).
\end{equation} 
Finally, we have  the quadrature rule~\cite{KressH}
\begin{eqnarray}\label{eq:quad3}
\frac{1}{4\pi}\int_0^{2\pi}\cot{\frac{\tau-t}{2}}f'(\tau)d\tau&\approx&\frac{1}{4\pi}\int_0^
{2\pi}\cot{\frac{\tau-t}{2}}\frac{d}{d\tau}\left[(P_{n}f)(\tau)\right]d\tau\nonumber\\
&=&\sum_{i=0}^{2n-1}T_i^{(n)}(t)f(t_i^{(n)})
\end{eqnarray}
where 
$$T_i^{(n)}(t)=-\frac{1}{2n}\sum_{m=1}^{n-1}m\cos{m(t-t_i^{(n)})}-\frac{1}{4}\cos n(t-t_i^{(n)}).$$
The derivatives of the densities needed for the evaluation of the operators $N_k^w$ and $N_\kappa^w$ are effected by differentiation of the global trigonometric interpolant of the densities. This can be pursued either by means of Fast Fourier Transforms (FFTs) or using the Fourier differentiation matrix $D^{(n)}$ whose entries are given by $D^{(n)}(i,j)=\frac{1}{2}(-1)^{i+j}\cot\left(\frac{(i-j)\pi}{n}\right),\ i\neq j$ and $D^{(n)}(i,i)=0$. 

Finally, given that the values of $ \gamma_N^{1,w} u^t $ and $\mu^w$ vanish at corner points, the terms in the boundary integral equations that feature these quantities are not collocated at corner points. Alternatively, this issue can be bypassed altogether by shifting the mesh $t_{j}^{(n)}$ by $h/2$, where $h$ is the meshsize. All of the interpolatory quadratures presented above still apply for the shifted meshes. Finally, the Fourier multipliers $PS_{S,\kappa}^w$ and $PS_{N,\kappa}^w$ defined in equations~\eqref{eq:defPS1w} and~\eqref{eq:defPS2w} can be easily evaluated using trigonometric interpolation and FFTs.

\subsection{Numerical results}\label{numres}

We present in this section a variety of numerical results that
demonstrate the properties of the various formulations considered in this text. Solutions of the linear systems arising from the Nystr\"om discretizations of the transmission integral equations described in Section~\ref{singular_int} are obtained by means of the fully complex, unrestarted version of the iterative solver GMRES~\cite{SaadSchultz}.  For the case of the regularized formulations we present choices of the complex wavenumber $\kappa$ in each of the cases considered; our extensive numerical experiments suggest that these values of $\kappa$  lead  to nearly optimal numbers of GMRES iterations to reach desired (small) GMRES relative residuals. We also present in each table the values of the GMRES relative residual tolerances used in the numerical experiments.
 
\begin{figure}
\[
 \includegraphics[width=0.3\textwidth]{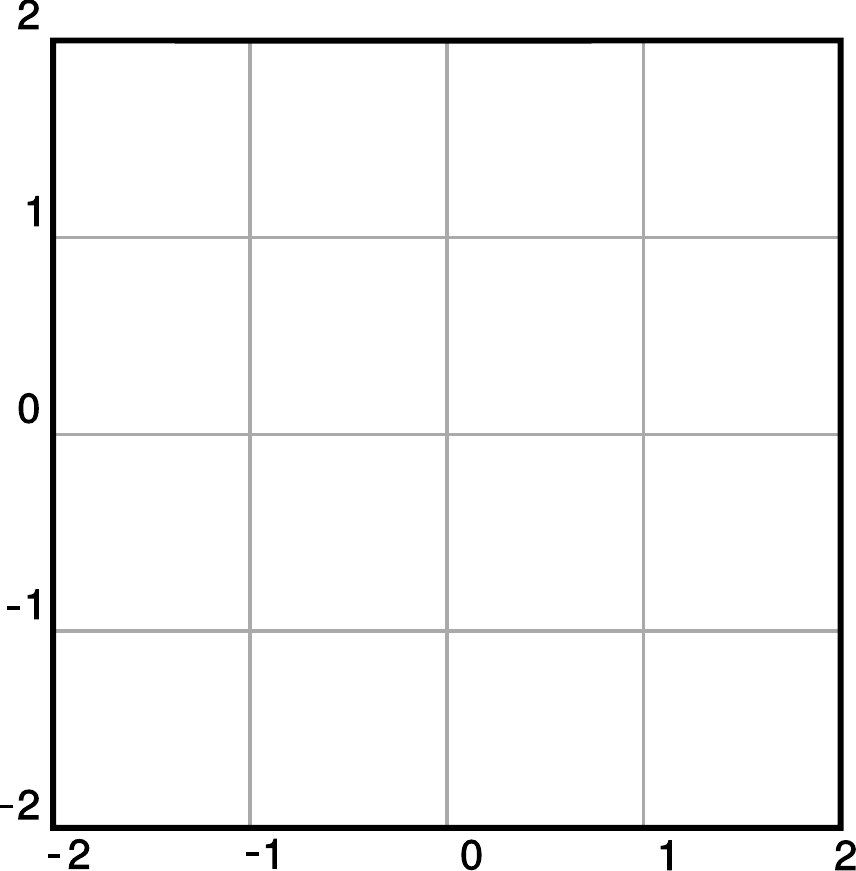}\qquad
 \includegraphics[width=0.3\textwidth]{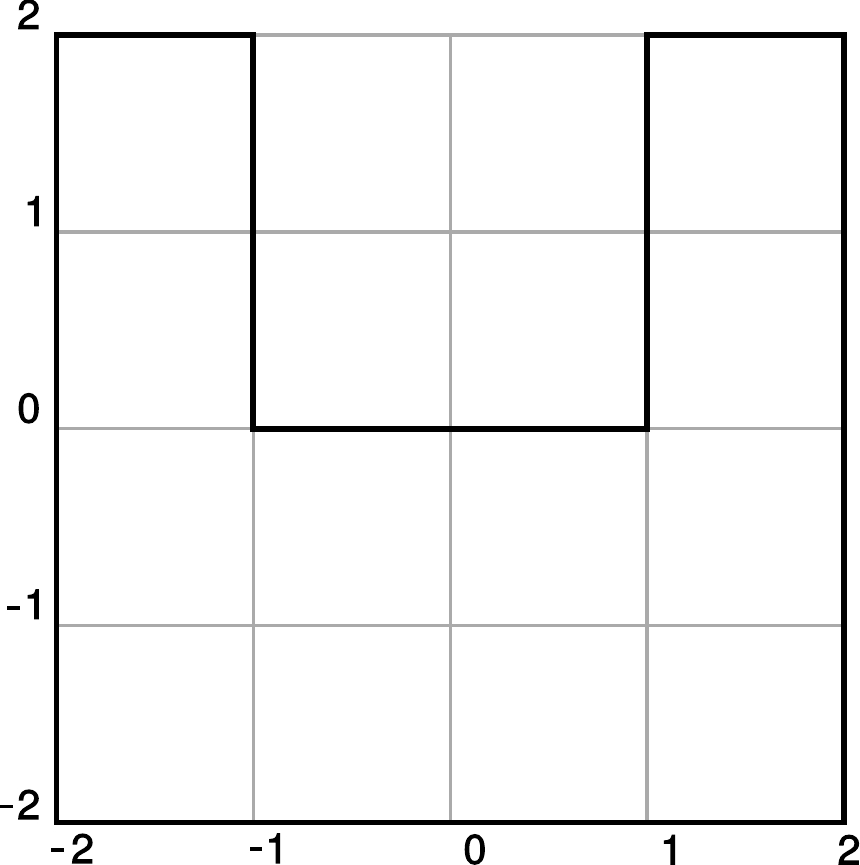}
\]
 \caption{\label{fig:U}Domains for our numerical tests: (a) square; (b) U-shape domain}
\end{figure}

We present scattering experiments concerning the following two Lipschitz geometries (see Figure \ref{fig:U}): (a) a square whose sides equal to 4, and (b) a U-shape scatterer of sides equal to 4 and indentation equal to 2 . For every scattering experiment we consider plane-wave incidence $u^{inc}$  of direction $(0,-1)$ and we present maximum far-field errors, that is we choose sufficiently many directions  $\frac{\mathbf{x}}{|\mathbf{x}|}$  (1024 directions have been used in our computations)  and for each direction we compute the far-field amplitude $u^{1}_\infty(\hat{\mathbf{x}})$ defined as
\begin{equation}
\label{eq:far_field}
u^{1}(\mathbf{x})=\frac{e^{ik_1|\mathbf{x}|}}{\sqrt{|\mathbf{x}|}}\left(u^{1}_\infty(\hat{\mathbf{x}})+\mathcal{O}\left(\frac{1}{|\mathbf{x}|}\right)\right),\
|\mathbf{x}|\rightarrow\infty.\\
\end{equation}
The maximum far-field errors were evaluated through comparisons of the
numerical solutions $u_\infty^{1,  calc}$ corresponding to either formulation with reference solutions $u_\infty^{1, ref}$ by means of the relation
\begin{equation}
\label{eq:farField_error}
\varepsilon_\infty=\max|u_\infty^{1,
calc}(\hat{\mathbf{x}})-u_\infty^{1, ref}(\hat{\mathbf{x}})|
\end{equation}
The latter solutions $u_\infty^{1, ref}$ were produced using solutions corresponding with refined discretizations based on the formulation CFIESK with GMRES residuals of $10^{-12}$ for all geometries. Besides far field errors, we display the numbers of iterations required by the GMRES solver to reach relative residuals that are specified in each case.  We used in the numerical experiments discretizations ranging from 6 to 12 discretization points per wavelength, for frequencies $k_1$ and $k_2$ in the medium to the high-frequency range corresponding to scattering problems of sizes ranging from $2.5$ to $40.8$ wavelengths. The columns ``Unknowns'' in all Tables display the numbers of unknowns used in each case, which equal to the value $4n$ defined in Section~\ref{singular_int} for the weighted CFIEFK, CFIESK, CFIER, and CFIERPS formulations, and $2n$ for the weighted SCFIE formulation. In order to remind the reader of the fact that the SCFIE formulations require half the number of unknowns required by each of the other formulations, we denote the former by SCFIE$^*$ in the tables. 
%In all of the scattering experiments we considered plane-wave incident fields of direction $d=(0,-1)$. 
Following common practice~\cite{boubendir}, we used the CFIEFK operators as their own preconditioners, and we denote these by ${\rm CFIEFK}^2$. We note that in this case the computational time required to perform a matrix-vector product corresponding to the  ${\rm CFIEFK}^2$ formulation is double that related to the CFIEFK formulation.

We start by presenting the high-order convergence of our Nystr\"om solvers in Tables~\ref{resultsMS1}--\ref{resultsMU3}. We have used sigmoid transforms with a value $p=3$ for all formulations but SCFIE, in which case we used $p=4$. The need for a different value of $p$ for the latter formulations can be attributed to the more singular nature of the solutions of these equations. As it can be seen, solvers based on the CFIESK formulations are the most accurate on account of the facts that they do not require numerical differentiation.

\begin{table}
\begin{center}
\resizebox{!}{1.2cm}
{
\begin{tabular}{|c|c|c|c|c|c|c|c|c|c|c|c|c|}
\hline
$k_1$ & $k_2$ & Unknowns & \multicolumn{2}{c|}{CFIEFK$^2$} &\multicolumn{2}{c|}{CFIESK}&\multicolumn{2}{c|}{SCFIE$^*$}& \multicolumn{2}{c|}{CFIER}&\multicolumn{2}{c|}{CFIERPS}\\
\cline{4-13}
 & & & Iter.& $\epsilon_\infty$ &Iter.&$\epsilon_\infty$&Iter.&$\epsilon_\infty$&Iter.&$\epsilon_\infty$&Iter.&$\epsilon_\infty$\\
\hline
 1 & 4 & 256 & 32 & 6.6 $\times$ $10^{-3}$& 34 & 2.4 $\times$ $10^{-6}$ & 43 & 3.8 $\times$ $10^{-3}$ & 43 & 3.2 $\times$ $10^{-3}$& 32 & 3.1 $\times$ $10^{-3}$\\
 1 & 4 & 512 & 31 & 8.0 $\times$ $10^{-4}$ & 34 & 1.8 $\times$ $10^{-7}$ & 46 & 5.5 $\times$ $10^{-4}$ & 43 & 3.9 $\times$ $10^{-4}$ & 33 & 3.7 $\times$ $10^{-4}$ \\
 1 & 4 & 1024 & 31 & 1.0 $\times$ $10^{-4}$ & 34 & 1.1 $\times$ $10^{-8}$ & 49 & 8.0 $\times$ $10^{-5}$  & 47 & 4.8 $\times$ $10^{-5}$ & 34 & 4.6 $\times$ $10^{-5}$ \\
 1 & 4 & 2048 & 31 & 1.2 $\times$ $10^{-5}$ & 34 & 4.1 $\times$ $10^{-10}$ & 54 & 1.2 $\times$ $10^{-5}$ & 47 & 6.0 $\times$ $10^{-6}$ & 34 & 5.8 $\times$ $10^{-6}$ \\
\hline
\end{tabular}
}
\caption{\label{resultsMS1} Scattering experiments for the square
  geometry with $\rho=1$, and for the CFIEFK$^2$, CFIESK, SCFIE, and CFIERPS formulations In the SCFIE formulation we selected $\eta=k_1$. In the regularized formulations CFIER and CFIERPS we used $\kappa=(k_1+k_2)/2+i\ k_1 $. The GMRES residual was set to equal $10^{-12}$.}
\end{center}
\end{table}

\begin{table}
\begin{center}
\resizebox{!}{1.2cm}
{
\begin{tabular}{|c|c|c|c|c|c|c|c|c|c|c|c|c|}
\hline
$k_1$ & $k_2$ & Unknowns & \multicolumn{2}{c|}{CFIEFK$^2$} &\multicolumn{2}{c|}{CFIESK}&\multicolumn{2}{c|}{SCFIE$^*$}& \multicolumn{2}{c|}{CFIER}&\multicolumn{2}{c|}{CFIERPS}\\
\cline{4-13}
 & & & Iter.& $\epsilon_\infty$ &Iter.&$\epsilon_\infty$&Iter.&$\epsilon_\infty$&Iter.&$\epsilon_\infty$&Iter.&$\epsilon_\infty$\\
\hline
1 & 4 & 352 & 84 & 4.3 $\times$ $10^{-2}$& 75 & 3.7 $\times$ $10^{-4}$ & 64 & 1.4 $\times$ $10^{-2}$ & 73 &1.7 $\times$ $10^{-2}$& 62 & 1.9 $\times$ $10^{-2}$\\
 1 & 4 & 704 & 82 & 5.2 $\times$ $10^{-3}$ & 75 & 2.2 $\times$ $10^{-5}$ & 66 & 1.7 $\times$ $10^{-3}$ & 74 &2.1 $\times$ $10^{-3}$ & 63 & 2.3 $\times$ $10^{-3}$ \\
 1 & 4 & 1408 & 81 & 6.4 $\times$ $10^{-4}$ & 75 & 1.5 $\times$ $10^{-6}$ & 67 & 2.4 $\times$ $10^{-4}$ & 75 &2.6 $\times$ $10^{-4}$ & 63 & 2.9 $\times$ $10^{-4}$ \\
 1 & 4 & 2816 & 80 & 7.9 $\times$ $10^{-5}$ & 75 & 9.9 $\times$ $10^{-8}$ & 68 & 3.7 $\times$ $10^{-5}$  & 77 &3.1 $\times$ $10^{-5}$ & 63 & 3.7 $\times$ $10^{-5}$ \\
\hline
\end{tabular}
}
\caption{\label{resultsMU1} Scattering experiments for the U-shape
  geometry with $\rho=1$, and for the CFIEFK$^2$, CFIESK, SCFIE, and CFIERPS formulations In the SCFIE formulation we selected $\eta=k_1$. In the regularized formulations CFIER and CFIERPS we used $\kappa=(k_1+k_2)/2+i\ k_1 $. The GMRES residual was set to equal $10^{-12}$.}
\end{center}
\end{table}

\begin{table}
\begin{center}
\resizebox{!}{1.2cm}
{
\begin{tabular}{|c|c|c|c|c|c|c|c|c|c|c|c|c|}
\hline
$k_1$ & $k_2$ & Unknowns & \multicolumn{2}{c|}{CFIEFK$^2$} &\multicolumn{2}{c|}{CFIESK}&\multicolumn{2}{c|}{SCFIE$^*$}& \multicolumn{2}{c|}{CFIER}&\multicolumn{2}{c|}{CFIERPS}\\
\cline{4-13}
 & & & Iter.& $\epsilon_\infty$ &Iter.&$\epsilon_\infty$&Iter.&$\epsilon_\infty$&Iter.&$\epsilon_\infty$&Iter.&$\epsilon_\infty$\\
\hline
 1 & 4 & 256 & 58 & 9.9 $\times$ $10^{-4}$& 39 & 1.5 $\times$ $10^{-5}$ & 48 & 2.1 $\times$ $10^{-3}$ & 60 & 2.0 $\times$ $10^{-4}$& 76 & 4.1 $\times$ $10^{-4}$\\
 1 & 4 & 512 & 56 & 1.2 $\times$ $10^{-4}$ & 39 & 9.0 $\times$ $10^{-7}$ & 49 & 3.3 $\times$ $10^{-4}$ & 52 & 4.5 $\times$ $10^{-5}$ & 80 & 5.2 $\times$ $10^{-5}$ \\
 1 & 4 & 1024 & 54 & 1.5 $\times$ $10^{-5}$ & 37 & 6.0 $\times$ $10^{-8}$ & 51 & 5.0 $\times$ $10^{-5}$ & 57 & 6.0 $\times$ $10^{-6}$ & 84 & 6.5 $\times$ $10^{-6}$ \\
 1 & 4 & 2048 & 53 & 1.9 $\times$ $10^{-6}$ & 37 & 4.1 $\times$ $10^{-9}$ & 52 & 7.6 $\times$ $10^{-6}$ & 57 & 7.0 $\times$ $10^{-7}$ & 87 & 8.2 $\times$ $10^{-7}$ \\
\hline
\end{tabular}
}
\caption{\label{resultsMS3} Scattering experiments for the square
  geometry with $\rho=k_1^2/k_2^2$, and for the CFIEFK$^2$, CFIESK, SCFIE, and CFIERPS formulations In the SCFIE formulation we selected $\eta=k_1$. In the regularized formulations CFIER and CFIERPS we used $\kappa=(k_1+k_2)/2+i\ k_1 $. The GMRES residual was set to equal $10^{-12}$.}
\end{center}
\end{table}

\begin{table}
\begin{center}
\resizebox{!}{1.2cm}
{
\begin{tabular}{|c|c|c|c|c|c|c|c|c|c|c|c|c|}
\hline
$k_1$ & $k_2$ & Unknowns & \multicolumn{2}{c|}{CFIEFK$^2$} &\multicolumn{2}{c|}{CFIESK}&\multicolumn{2}{c|}{SCFIE$^*$}& \multicolumn{2}{c|}{CFIER}&\multicolumn{2}{c|}{CFIERPS}\\
\cline{4-13}
 & & & Iter.& $\epsilon_\infty$ &Iter.&$\epsilon_\infty$&Iter.&$\epsilon_\infty$&Iter.&$\epsilon_\infty$&Iter.&$\epsilon_\infty$\\
\hline
 1 & 4 & 352 & 110 & 6.5 $\times$ $10^{-4}$& 67 & 4.8 $\times$ $10^{-3}$ & 71 & 5.4 $\times$ $10^{-3}$ & 93 &3.5 $\times$ $10^{-4}$& 115 & 2.5 $\times$ $10^{-4}$\\
 1 & 4 & 704 & 107 & 1.0 $\times$ $10^{-4}$ & 64 & 1.1 $\times$ $10^{-3}$ & 71 & 8.0 $\times$ $10^{-4}$ & 86 &7.2 $\times$ $10^{-5}$ & 119 & 3.4 $\times$ $10^{-5}$ \\
 1 & 4 & 1408 & 107 & 2.0 $\times$ $10^{-5}$ & 64 & 2.5 $\times$ $10^{-4}$ & 72 & 1.2 $\times$ $10^{-4}$ & 88 &1.3 $\times$ $10^{-5}$ & 123 & 8.1 $\times$ $10^{-5}$ \\
 1 & 4 & 2816 & 105 & 3.9 $\times$ $10^{-6}$ & 63 & 5.7 $\times$ $10^{-5}$ & 72 & 1.7 $\times$ $10^{-5}$  & 91 &4.0 $\times$ $10^{-6}$ & 126 & 3.4 $\times$ $10^{-6}$ \\
\hline
\end{tabular}
}
\caption{\label{resultsMU3} Scattering experiments for the U-shape
  geometry with $\rho=k_1^2/k_2^2$, and for the CFIEFK$^2$, CFIESK, SCFIE, and CFIERPS formulations In the SCFIE formulation we selected $\eta=k_1$. In the regularized formulations CFIER and CFIERPS we used $\kappa=(k_1+k_2)/2+i\ k_1 $. The GMRES residual was set to equal $10^{-12}$.}
\end{center}
\end{table}

In Table~\ref{resultsSP1} we present computational times required by a matrix-vector product for each of the formulations CFIEFK, CFIESK, SCFIE, CFIER, and CFIERPS. The computational times presented were delivered by a MATLAB implementation of the Nystr\"om
discretization on a MacBookPro machine with $2\times 2.3$ GHz
Quad-core Intel i7 with 16 GB of memory. We present computational
times for the square geometry, as the computational times required by
the U-shaped geometry considered in this text are extremely similar to
those for the square geometry at the same levels of discretization. As
it can be seen from the results in Table~\ref{resultsSP1}, the
computational times required by a matrix-vector product for the
CFIEFK, CFIESK, SCFIE, and CFIERPS formulations are quite similar, while the computational times required by a matrix-vector product related to the CFIER
formulation are on average $1.16$ times more expensive than
those required by the other three formulations. 

\begin{table}
\begin{center}
\resizebox{!}{1.0cm}
{
\begin{tabular}{|c|c|c|c|c|c|}
\hline
 Unknowns & CFIEFK & CFIESK & SCFIE$^*$ & CFIER & CFIERPS\\
\cline{2-6}
\hline
256 & 4.5  & 4.8 &  4.2  & 5.6 & 5.0\\
512 & 15.9 &  16.4 &  15.4 &  19.1 & 17.0\\
1024 & 59.4  & 63.6 &  64.2 &  73.0 & 63.8\\
\hline
\end{tabular}
}
\caption{\label{resultsSP1} Computational times (in seconds) for the matrix-vector products (seconds) needed by the formulations CFIEFK, CFIESK, SCFIE, CFIER, and CFIERPS for the square geometry.}
\end{center}
\end{table}

We present next in Tables~\ref{resultsMS2}--\ref{resultsMU4} the performance of our solvers based on the five formulations considered in this text in the case of high-contrast, high-frequency configurations. We conclude, in conjunction with the results presented in Table~\ref{resultsSP1}, that solvers based on the SCFIE, CFIER, and CFIERPS formulations consistently outperform solvers based on the classical formulations CFIEFK$^2$ and CFIESK in the regime under consideration. Furthermore, the solvers based on the SCFIE and CFIER formulations compare favorably to the solvers based on the CFIESK formulation.

\begin{table}
\begin{center}
\resizebox{!}{1.2cm}
{
\begin{tabular}{|c|c|c|c|c|c|c|c|c|c|c|c|c|}
\hline
$k_1$ & $k_2$ & Unknowns & \multicolumn{2}{c|}{CFIEFK$^2$} &\multicolumn{2}{c|}{CFIESK}&\multicolumn{2}{c|}{SCFIE$^*$}& \multicolumn{2}{c|}{CFIER}&\multicolumn{2}{c|}{CFIERPS}\\
\cline{4-13}
 & & & Iter.& $\epsilon_\infty$ &Iter.&$\epsilon_\infty$&Iter.&$\epsilon_\infty$&Iter.&$\epsilon_\infty$&Iter.&$\epsilon_\infty$\\
\hline
 1 & 4 & 256 & 18 & 6.8 $\times$ $10^{-3}$& 24 & 3.7 $\times$ $10^{-4}$ & 25 & 3.8 $\times$ $10^{-3}$ &26 & 3.1 $\times$ $10^{-3}$& 21 & 3.1 $\times$ $10^{-3}$\\
 2 & 8 & 512 & 24 & 2.9 $\times$ $10^{-3}$ & 39 & 5.0 $\times$ $10^{-4}$ & 37 & 2.1 $\times$ $10^{-4}$ &33 & 3.6 $\times$ $10^{-3}$ & 32 & 1.4 $\times$ $10^{-3}$ \\
 4 & 16 & 1024 & 62 & 4.7 $\times$ $10^{-3}$ & 94 & 4.0 $\times$ $10^{-3}$ & 63 & 1.0 $\times$ $10^{-4}$ &58 & 7.5 $\times$ $10^{-3}$ & 62 & 1.8 $\times$ $10^{-3}$ \\
 8 & 32 & 2048 & 119 & 8.1 $\times$ $10^{-3}$ & 162 & 8.2 $\times$ $10^{-3}$ & 112 & 3.8 $\times$ $10^{-4}$ &102 & 6.6 $\times$ $10^{-3}$ & 115 & 6.7 $\times$ $10^{-3}$ \\
\hline
\end{tabular}
}
\caption{\label{resultsMS2} Scattering experiments for the square
  geometry with $\rho=1$, and for the CFIEFK$^2$, CFIESK, SCFIE, and CFIERPS formulations In the SCFIE formulation we selected $\eta=k_1$. In the regularized formulations CFIER and CFIERPS we used $\kappa=(k_1+k_2)/2+i\ k_1 $. The GMRES residual was set to equal $10^{-4}$.}
\end{center}
\end{table}

\begin{table}
\begin{center}
\resizebox{!}{1.2cm}
{
\begin{tabular}{|c|c|c|c|c|c|c|c|c|c|c|c|c|}
\hline
$k_1$ & $k_2$ & Unknowns & \multicolumn{2}{c|}{CFIEFK$^2$} &\multicolumn{2}{c|}{CFIESK}&\multicolumn{2}{c|}{SCFIE$^*$}& \multicolumn{2}{c|}{CFIER}&\multicolumn{2}{c|}{CFIERPS}\\
\cline{4-13}
 & & & Iter.& $\epsilon_\infty$ &Iter.&$\epsilon_\infty$&Iter.&$\epsilon_\infty$&Iter.&$\epsilon_\infty$&Iter.&$\epsilon_\infty$\\
\hline
 1 & 4 & 352 & 52 & 4.3 $\times$ $10^{-2}$ & 64 & 7.0 $\times$ $10^{-4}$ & 45 & 1.4 $\times$ $10^{-2}$ & 49 & 2.0 $\times$ $10^{-2}$ & 44 & 2.2 $\times$ $10^{-2}$ \\
 2 & 8 & 704 & 76 & 2.5 $\times$ $10^{-2}$ & 107 & 3.2 $\times$ $10^{-3}$ & 78 & 9.4 $\times$ $10^{-4}$ & 79 & 1.5 $\times$ $10^{-2}$ & 75 & 2.0 $\times$ $10^{-2}$ \\
 4 & 16 & 1408 & 117 & 8.7 $\times$ $10^{-3}$ & 149 & 7.7 $\times$ $10^{-3}$ & 136 & 3.3 $\times$ $10^{-4}$ & 124 & 3.9 $\times$ $10^{-3}$ & 113 & 4.4 $\times$ $10^{-3}$ \\
 8 & 32 & 2816 & 281 & 3.3 $\times$ $10^{-2}$ & 351 & 3.2 $\times$ $10^{-2}$ & 257 & 2.8 $\times$ $10^{-4}$ & 257 & 1.4 $\times$ $10^{-2}$ & 244 & 1.2 $\times$ $10^{-2}$ \\
\hline
\end{tabular}
}
\caption{\label{resultsMU2} Scattering experiments for the U-shape
  geometry with $\rho=1$, and for the CFIEFK$^2$, CFIESK, SCFIE, and CFIERPS formulations In the SCFIE formulation we selected $\eta=k_1$. In the regularized formulations CFIER and CFIERPS we used $\kappa=(k_1+k_2)/2+i\ k_1 $. The GMRES residual was set to equal $10^{-4}$.}
\end{center}
\end{table}

\begin{table}
\begin{center}
\resizebox{!}{1.2cm}
{
\begin{tabular}{|c|c|c|c|c|c|c|c|c|c|c|c|c|}
\hline
$k_1$ & $k_2$ & Unknowns & \multicolumn{2}{c|}{CFIEFK$^2$} &\multicolumn{2}{c|}{CFIESK}&\multicolumn{2}{c|}{SCFIE$^*$}& \multicolumn{2}{c|}{CFIER}&\multicolumn{2}{c|}{CFIERPS}\\
\cline{4-13}
 & & & Iter.& $\epsilon_\infty$ &Iter.&$\epsilon_\infty$&Iter.&$\epsilon_\infty$&Iter.&$\epsilon_\infty$&Iter.&$\epsilon_\infty$\\
\hline
 1 & 4 & 256 & 30 & 9.8 $\times$ $10^{-4}$& 23 & 6.7 $\times$ $10^{-5}$ & 28 & 2.1 $\times$ $10^{-3}$ & 33 & 2.1 $\times$ $10^{-4}$& 47 & 4.3 $\times$ $10^{-4}$\\
 2 & 8 & 512 & 53 & 1.2 $\times$ $10^{-3}$ & 34 & 1.3 $\times$ $10^{-4}$ & 59 & 3.6 $\times$ $10^{-4}$ & 39 & 6.5 $\times$ $10^{-4}$ & 72 & 8.5 $\times$ $10^{-4}$ \\
 4 & 16 & 1024 & 82 & 6.7 $\times$ $10^{-4}$ & 53 & 7.0 $\times$ $10^{-3}$ & 88 & 2.5 $\times$ $10^{-4}$ & 51 & 6.7 $\times$ $10^{-4}$ & 99 & 1.8 $\times$ $10^{-3}$ \\
 8 & 32 & 2048 & 236 & 1.6 $\times$ $10^{-3}$ & 112 & 2.1 $\times$ $10^{-4}$ & 205 & 2.3 $\times$ $10^{-4}$ & 111 & 1.8 $\times$ $10^{-3}$ & 197 & 4.3 $\times$ $10^{-3}$ \\
\hline
\end{tabular}
}
\caption{\label{resultsMS4} Scattering experiments for the square
  geometry with $\rho=k_1^2/k_2^2$, and for the CFIEFK$^2$, CFIESK, SCFIE, and CFIERPS formulations In the SCFIE formulation we selected $\eta=k_1$. In the regularized formulations CFIER and CFIERPS we used $\kappa=(k_1+k_2)/2+i\ k_1 $. The GMRES residual was set to equal $10^{-4}$.}
\end{center}
\end{table}

\begin{table}
\begin{center}
\resizebox{!}{1.2cm}
{
\begin{tabular}{|c|c|c|c|c|c|c|c|c|c|c|c|c|}
\hline
$k_1$ & $k_2$ & Unknowns & \multicolumn{2}{c|}{CFIEFK$^2$} &\multicolumn{2}{c|}{CFIESK}&\multicolumn{2}{c|}{SCFIE$^*$}& \multicolumn{2}{c|}{CFIER}&\multicolumn{2}{c|}{CFIERPS}\\
\cline{4-13}
 & & & Iter.& $\epsilon_\infty$ &Iter.&$\epsilon_\infty$&Iter.&$\epsilon_\infty$&Iter.&$\epsilon_\infty$&Iter.&$\epsilon_\infty$\\
\hline
 1 & 4 & 352 & 76 & 5.4 $\times$ $10^{-4}$ & 40 & 4.8 $\times$ $10^{-3}$ & 57 & 5.5 $\times$ $10^{-3}$ & 58 & 5.1 $\times$ $10^{-4}$ & 66 & 3.5 $\times$ $10^{-4}$ \\
 2 & 8 & 704 & 98 & 7.7 $\times$ $10^{-4}$ & 64 & 2.0 $\times$ $10^{-3}$ & 87 & 9.6 $\times$ $10^{-4}$ & 66 & 1.9 $\times$ $10^{-4}$ & 107 & 8.1 $\times$ $10^{-4}$ \\
 4 & 16 & 1408 & 236 & 1.7 $\times$ $10^{-3}$ & 126 & 2.2 $\times$ $10^{-3}$ & 168 & 4.3 $\times$ $10^{-4}$ & 128 & 9.2 $\times$ $10^{-4}$ & 181 & 2.5 $\times$ $10^{-3}$ \\
 8 & 32 & 2816 & 424 & 3.2 $\times$ $10^{-3}$ & 252 & 2.8 $\times$ $10^{-3}$ & 286 & 7.0 $\times$ $10^{-4}$ & 216 & 1.1 $\times$ $10^{-3}$ & 305 & 3.3 $\times$ $10^{-3}$ \\
\hline
\end{tabular}
}
\caption{\label{resultsMU4} Scattering experiments for the U-shape
  geometry with $\rho=k_1^2/k_2^2$, and for the CFIEFK$^2$, CFIESK, SCFIE, and CFIERPS formulations In the SCFIE formulation we selected $\eta=k_1$. In the regularized formulations CFIER and CFIERPS we used $\kappa=(k_1+k_2)/2+i\ k_1 $. The GMRES residual was set to equal $10^{-4}$.}
\end{center}
\end{table}

We conclude with an illustration in Tables~\ref{resultsMS2R}--\ref{resultsMU2R} of high-contrast high-frequency scenarios whereby the computational gains associated with solvers based on the SCFIE, CFIER, and CFIERPS are the most significant. As it can be seen in Table~\ref{resultsMU2R}, solvers based on the SCFIE formulations and CFIER formulations can result in computational times that are at least $3$ times faster than those based on the classical CFIESK and CFIEFK formulations.

\begin{table}
\begin{center}
\resizebox{!}{1.2cm}
{
\begin{tabular}{|c|c|c|c|c|c|c|c|c|c|c|c|c|}
\hline
$k_1$ & $k_2$ & Unknowns & \multicolumn{2}{c|}{CFIEFK$^2$} &\multicolumn{2}{c|}{CFIESK}&\multicolumn{2}{c|}{SCFIE$^*$}& \multicolumn{2}{c|}{CFIER}&\multicolumn{2}{c|}{CFIERPS}\\
\cline{4-13}
 & & & Iter.& $\epsilon_\infty$ &Iter.&$\epsilon_\infty$&Iter.&$\epsilon_\infty$&Iter.&$\epsilon_\infty$&Iter.&$\epsilon_\infty$\\
\hline
 3.5 & 1 & 256 & 16 & 2.1 $\times$ $10^{-3}$& 23 & 8.1 $\times$ $10^{-4}$ & 21 & 1.1 $\times$ $10^{-3}$ &20 & 1.7 $\times$ $10^{-3}$& 21 & 3.3 $\times$ $10^{-3}$\\
 7 & 2 & 512 & 29 & 1.7 $\times$ $10^{-3}$ & 41 & 2.0 $\times$ $10^{-3}$ & 24 & 4.4 $\times$ $10^{-4}$ & 20 & 2.1 $\times$ $10^{-3}$ & 30 & 2.0 $\times$ $10^{-3}$ \\
 14 & 4 & 1024 & 59 & 8.9 $\times$ $10^{-3}$ & 56 & 1.7 $\times$ $10^{-1}$ & 35 & 3.2 $\times$ $10^{-4}$ & 22 & 1.1 $\times$ $10^{-3}$ & 57 & 4.2 $\times$ $10^{-3}$ \\
 28 & 8 & 2048 & 85 & 4.1 $\times$ $10^{-2}$ & 94 & 9.4 $\times$ $10^{-2}$ & 39 & 5.5 $\times$ $10^{-4}$ & 25 & 1.1 $\times$ $10^{-3}$ & 87 & 1.2 $\times$ $10^{-2}$ \\
\hline
\end{tabular}
}
\caption{\label{resultsMS2R} Scattering experiments for the square
  geometry with $\rho=1$, and for the CFIEFK$^2$, CFIESK, SCFIE, and CFIERPS formulations In the SCFIE formulation we selected $\eta=k_1$. In the regularized formulations CFIER and CFIERPS we used $\kappa=(k_1+k_2)/2+i\ 4 $. The GMRES residual was set to equal $10^{-4}$.}
\end{center}
\end{table}

\begin{table}
\begin{center}
\resizebox{!}{1.2cm}
{
\begin{tabular}{|c|c|c|c|c|c|c|c|c|c|c|c|c|}
\hline
$k_1$ & $k_2$ & Unknowns & \multicolumn{2}{c|}{CFIEFK$^2$} &\multicolumn{2}{c|}{CFIESK}&\multicolumn{2}{c|}{SCFIE$^*$}& \multicolumn{2}{c|}{CFIER}&\multicolumn{2}{c|}{CFIERPS}\\
\cline{4-13}
 & & & Iter.& $\epsilon_\infty$ &Iter.&$\epsilon_\infty$&Iter.&$\epsilon_\infty$&Iter.&$\epsilon_\infty$&Iter.&$\epsilon_\infty$\\
\hline
 3.5 & 1 & 352 & 42 & 3.4 $\times$ $10^{-3}$ & 51 & 2.4 $\times$ $10^{-3}$ & 30 & 3.6 $\times$ $10^{-3}$ & 26 & 1.5 $\times$ $10^{-3}$ & 37 & 2.4 $\times$ $10^{-3}$ \\
 7 & 2 & 704 & 54 & 3.2 $\times$ $10^{-3}$ & 70 & 2.7 $\times$ $10^{-2}$ & 35 & 3.0 $\times$ $10^{-4}$ & 33 & 1.3 $\times$ $10^{-3}$ & 47 & 2.7 $\times$ $10^{-3}$ \\
 14 & 4 & 1408 & 123 & 2.7 $\times$ $10^{-2}$ & 148 & 9.5 $\times$ $10^{-2}$ & 47 & 3.2 $\times$ $10^{-4}$ & 46 & 1.2 $\times$ $10^{-3}$ & 77 & 2.6 $\times$ $10^{-3}$ \\
 28 & 8 & 2816 & 238 & 9.8 $\times$ $10^{-2}$ & 240 & 1.3 $\times$ $10^{-1}$ & 84 & 3.6 $\times$ $10^{-4}$ & 67 & 1.8 $\times$ $10^{-3}$ & 169 & 4.2 $\times$ $10^{-3}$ \\
\hline
\end{tabular}
}
\caption{\label{resultsMU2R} Scattering experiments for the U-shape
  geometry with $\rho=1$, and for the CFIEFK$^2$, CFIESK, SCFIE, and CFIERPS formulations In the SCFIE formulation we selected $\eta=k_1$. In the regularized formulations CFIER and CFIERPS we used $\kappa=(k_1+k_2)/2+i\ 4 $. The GMRES residual was set to equal $10^{-4}$.}
\end{center}
\end{table}
 
 Finally, we present in Table~\ref{resultsMS2Rounded} a comparison between scattering solutions corresponding to Lipschitz scatterers and solutions corresponding to nearby smooth scatterers that are obtained from rounding the corners. More specifically, we considered the sphere of radius 2 in $\mathbb{R}^2$ using the $\ell^q$ norm for $q=512$, that is
  \[
  B_q:=\{(x_1,x_2)\in\mathbb{R}^2: x_1^q+x_2^q = 2^q\},\ \quad q=512
\]
which is a close and smooth (rounded) approximation of the square geometry in Figure~\ref{fig:U}---indeed, the distance between the scatterers is about $10^{-3}$. We compare the far-field signature of $B_q, q=512$ to that of the square for various wavenumbers using the various boundary integral equation formulations considered in this text. We note that for a given frequency the numbers of iterations required by boundary integral formulations to reach the same tolerance are roughly the same for the square and its very close smooth (rounded) approximation---see Tables~\ref{resultsMS2} and~\ref{resultsMS2Rounded}. 
\begin{table}
\begin{center}
\resizebox{!}{1.2cm}
{
\begin{tabular}{|c|c|c|c|c|c|c|c|c|c|c|c|c|}
\hline
$k_1$ & $k_2$ & Unknowns & \multicolumn{2}{c|}{CFIEFK$^2$} &\multicolumn{2}{c|}{CFIESK}&\multicolumn{2}{c|}{SCFIE$^*$}& \multicolumn{2}{c|}{CFIER}&\multicolumn{2}{c|}{CFIERPS}\\
\cline{4-13}
 & & & Iter.& $\epsilon_\infty$ &Iter.&$\epsilon_\infty$&Iter.&$\epsilon_\infty$&Iter.&$\epsilon_\infty$&Iter.&$\epsilon_\infty$\\
\hline
 1 & 4 & 256 & 20 & 5.0 $\times$ $10^{-3}$& 25 & 2.7 $\times$ $10^{-4}$ & 19 & 8.7 $\times$ $10^{-3}$ &30 & 5.8 $\times$ $10^{-3}$& 21 & 5.8 $\times$ $10^{-3}$\\
 2 & 8 & 512 & 24 & 2.8 $\times$ $10^{-3}$ & 41 & 3.7 $\times$ $10^{-4}$ & 30 & 1.4 $\times$ $10^{-3}$ &32 & 5.6 $\times$ $10^{-3}$ & 32 & 1.7 $\times$ $10^{-3}$ \\
 4 & 16 & 1024 & 62 & 4.0 $\times$ $10^{-3}$ & 97 & 1.8 $\times$ $10^{-3}$ & 55 & 1.4 $\times$ $10^{-3}$ &59 & 5.6 $\times$ $10^{-3}$ & 62 & 1.8 $\times$ $10^{-3}$ \\
 8 & 32 & 2048 & 122 & 7.9 $\times$ $10^{-3}$ & 173 & 5.6 $\times$ $10^{-3}$ & 96 & 3.3 $\times$ $10^{-3}$ &103 & 8.7 $\times$ $10^{-3}$ & 117 & 4.6 $\times$ $10^{-3}$ \\
\hline
\end{tabular}
}
\caption{\label{resultsMS2Rounded} Scattering experiments for the $B_q,\ q=512$ sphere of radius 2 with $\rho=1$, and for the CFIEFK$^2$, CFIESK, SCFIE, and CFIERPS formulations In the SCFIE formulation we selected $\eta=k_1$. In the regularized formulations CFIER and CFIERPS we used $\kappa=(k_1+k_2)/2+i\ k_1 $. The GMRES residual was set to equal $10^{-4}$.}
\end{center}
\end{table} 

\section{Conclusions}

In this work we have presented high-order  Nystr\"om  discretizations based on polynomially graded meshes for several boundary integral formulations including certain regularized formulations for Helmholtz transmission problems in domains with corners. We have rigorously proven  the well-posedness of some of these formulations and have shown that solvers based on the regularized and on the single integral equations outperform solvers based on commonly used boundary integral equation formulations in the case of high-contrast, high-frequency applications. The numerical analysis of these schemes will be subject of future investigation.  Extensions of the regularization scheme used in this paper to the case of multiple dielectric scatterers will also be subject of future investigation. 

\section*{Acknowledgments}
Catalin Turc gratefully  acknowledges  support from NSF through contract DMS-1312169. V\'{\i}ctor Dom\'{i}nguez is partially supported by Ministerio de Econom\'{\i}a y Competitividad, through the grant  MTM2014-52859. Part of this research was carried out during a short visit of V\'{\i}ctor Dom\'{\i}nguez to NJIT. 

We want to thank the reviewer for his/her very careful reading and useful comments which help us to improve the readability  and to complement the theoretical results of this paper.
\bibliography{biblio}

\end{document}